\newcommand*\fullref[3][\relax]{%
  \ifdefined\hyperref%
    {\hyperref[#3]{#2\penalty 200\ \ref*{#3}#1}}%
  \else%
    {#2\penalty 200\ \relax\ref{#3}#1}%
  \fi%
}
\tikzset{
  pretableaumatrix/.style={
    ampersand replacement=\&,
    matrix of math nodes,
    outer sep=1mm,
    inner sep=0mm,
    anchor=center,
    row sep={between borders,-\pgflinewidth},
    column sep={between borders,-\pgflinewidth},
    dottedentry/.style={densely dotted},
  },
  pretableaunode/.style={
    font=\small,
    draw=gray,
    anchor=base,
    text height=3.75mm,
    text depth=1.25mm,
    minimum height=5mm,
    minimum width=5mm,
    inner sep=0mm,
    outer sep=0mm,
  },
  tableaumatrix/.style={
    pretableaumatrix,
    every node/.append style={
      pretableaunode,
    },
  },
  medtableaumatrix/.style={
    pretableaumatrix,
    every node/.append style={
      pretableaunode,
      font=\footnotesize,
      text height=2.75mm,
      text depth=.75mm,
      minimum height=3.5mm,
      minimum width=3.5mm
    },
  },
  smalltableaumatrix/.style={
    pretableaumatrix,
    every node/.append style={
      pretableaunode,
      font=\scriptsize,
      text height=1.85mm,
      text depth=.15mm,
      minimum height=2.5mm,
      minimum width=2.5mm,
    },
  },
  tinytableaumatrix/.style={
    pretableaumatrix,
    every node/.append style={
      pretableaunode,
      font=\tiny,
      text height=1.25mm,
      text depth=.15mm,
      minimum height=1.75mm,
      minimum width=1.75mm
    },
  },
  tableau/.style={
    baseline=-1.25mm,
    every matrix/.style={tableaumatrix},
  },
  preshapetableaumatrix/.style={
    pretableaumatrix,
    execute at end cell={\strut},
    every node/.append style={
      draw=black,
      anchor=base,
      inner sep=0mm,
      outer sep=0mm,
    },
    shadedentry/.style={fill=gray},
    darkshadedentry/.style={fill=darkgray},
  },
  medshapetableaumatrix/.style={
    preshapetableaumatrix,
    every node/.append style={
      text height=2.75mm,
      text depth=.75mm,
      minimum height=3.5mm,
      minimum width=3.5mm
    },
  },
  shapetableaumatrix/.style={
    ampersand replacement=\&,
    matrix of math nodes,
    outer sep=0mm,
    inner sep=0mm,
    anchor=base,
    row sep={between borders,-\pgflinewidth},
    column sep={between borders,-\pgflinewidth},
    execute at begin cell={\strut},
    every node/.append style={draw,anchor=base,text height=1mm,text depth=.5mm,minimum size=1.5mm,inner sep=0mm,outer sep=0mm},
  },
  shapetableau/.style={
    every matrix/.style={shapetableaumatrix},
  },
  topalign/.style={
    every matrix/.append style={name=maintableau,anchor=maintableau-1-1.base},
    baseline,
  },
}
\newcommand*\tableau[2][]{\tikz[tableau,#1]\matrix{#2};}
\theoremstyle{definition}
\newtheorem{definition}{Definition}[section]
\newtheorem{algorithm}[definition]{Algorithm}
\newtheorem{question}[definition]{Question}
\newtheorem{remark}[definition]{Remark}
\theoremstyle{plain}
\newtheorem{corollary}[definition]{Corollary}
\newtheorem{lemma}[definition]{Lemma}
\newtheorem{proposition}[definition]{Proposition}
\newtheorem{theorem}[definition]{Theorem}
\numberwithin{equation}{section}
\newcommand*{\defterm}[1]{\textit{#1}}
\newcommand{\textparens}[1]{\textrm{(}#1\textrm{)}}
\DeclarePairedDelimiter{\set}{\{}{\}}
\DeclarePairedDelimiter{\mset}{\langle}{\rangle}
\DeclarePairedDelimiterX{\gset}[2]{\{}{\}}{\,#1:#2\,}
\newcommand\gsetsplit[3][]{\mathopen#1\{\,#2:#3\,\mathclose#1\}}
\DeclarePairedDelimiter{\abs}{\lvert}{\rvert}
\DeclarePairedDelimiter{\brackets}{\lbrack}{\rbrack}
\DeclarePairedDelimiter{\parens}{\lparen}{\rparen}
\newcommand*{\emptyword}{\varepsilon}
\newcommand*{\nset}{\mathbb{N}}
\newcommand*{\zset}{\mathbb{Z}}
\newcommand*{\cset}{\mathbb{C}}
\DeclarePairedDelimiterX{\pres}[2]{\langle}{\rangle}{\,#1\,\delimsize\vert\,\mathopen{}#2\,}
\newcommand*{\drel}[1]{\mathcal{#1}}
\newcommand*\e{\ddot{e}}
\newcommand*\f{\ddot{f}}
\newcommand*\g{\ddot{g}}
\newcommand*\ecount{\ddot\epsilon}
\newcommand*\fcount{\ddot\phi}
\newcommand*\ke{\tilde{e}}
\newcommand*\kf{\tilde{f}}
\newcommand*\kecount{\tilde\epsilon}
\newcommand*\kfcount{\tilde\phi}
\newcommand*{\aA}{\mathcal{A}}
\newcommand*{\std}[2][]{\mathrm{std}\parens[#1]{#2}}
\newcommand*{\plen}[2][]{\ell\parens[#1]{#2}}
\newcommand*{\pwt}[2][]{\abs[#1]{#2}}
\newcommand*{\clen}[2][]{\ell\parens[#1]{#2}}
\newcommand*{\cwt}[2][]{\abs[#1]{#2}}
\newcommand*{\wt}[2][]{\wtlit\parens[#1]{#2}}
\newcommand*{\wtlit}{\mathrm{wt}}
\newcommand*{\descomp}[2][]{\mathcal{DC}\parens[#1]{#2}}
\newcommand*{\schinv}[1]{{#1}^\sharp}
\newcommand*{\schinvlit}{{}^\sharp}
\renewcommand*{\P}[2][]{\mathrm{P}\parens[#1]{#2}}
\newcommand*{\Q}[2][]{\mathrm{Q}\parens[#1]{#2}}
\newcommand*{\tabloid}[2][]{\mathrm{Toid}\parens[#1]{#2}}
\newcommand*{\colreading}[2][]{\mathrm{C}\parens[#1]{#2}}
\newcommand*{\qrtabloid}[2][]{\mathrm{QRoid}\parens[#1]{#2}}
\newcommand*{\qrtableau}[2][]{\mathrm{QR}\parens[#1]{#2}}
\newcommand*{\recribbon}[2][]{\mathrm{RR}\parens[#1]{#2}}
\newcommand*{\plac}{{\smash{\mathrm{plac}}}}
\newcommand*{\hypo}{{\smash{\mathrm{hypo}}}}
\newcommand*{\placcong}{\equiv_\plac}
\newcommand*{\hypocong}{\equiv_\hypo}
\newcommand*\ghost[2]{\mathrlap{#1}\phantom{#2}}
\begin{document}

\title[Crystallizing the hypoplactic monoid]{Crystallizing the hypoplactic monoid: from quasi-Kashiwara operators to the
  Robinson--Schensted--Knuth-type correspondence for quasi-ribbon tableaux}

\author{Alan J. Cain}
\address{%
Centro de Matem\'{a}tica e Aplica\c{c}\~{o}es\\
Faculdade de Ci\^{e}ncias e Tecnologia\\
Universidade Nova de Lisboa\\
2829--516 Caparica\\
Portugal
}
\email{%
a.cain@fct.unl.pt
}
\thanks{The first author was supported by an Investigador {\sc FCT} fellowship ({\sc IF}/01622/2013/{\sc CP}1161/{\sc
    CT}0001).}

\author{Ant\'onio Malheiro}
\address{%
Centro de Matem\'{a}tica e Aplica\c{c}\~{o}es\\
Faculdade de Ci\^{e}ncias e Tecnologia\\
Universidade Nova de Lisboa\\
2829--516 Caparica\\
Portugal
}
\address{%
Departamento de Matem\'{a}tica\\
Faculdade de Ci\^{e}ncias e Tecnologia\\
Universidade Nova de Lisboa\\
2829--516 Caparica\\
Portugal
}
\email{%
ajm@fct.unl.pt
}
\thanks{For both authors, this work was partially supported by the Funda\c{c}\~{a}o para
a Ci\^{e}ncia e a Tecnologia (Portuguese Foundation for Science and Technology) through the project {\sc UID}/{\sc
  MAT}/00297/2013 (Centro de Matem\'{a}tica e Aplica\c{c}\~{o}es).}

\begin{abstract}
  Crystal graphs, in the sense of Kashiwara, carry a natural monoid structure given by identifying words labelling
  vertices that appear in the same position of isomorphic components of the crystal. In the particular case of the
  crystal graph for the $q$-analogue of the special linear Lie algebra $\mathfrak{sl}_{n}$, this monoid is the celebrated
  plactic monoid, whose elements can be identified with Young tableaux. The crystal graph and the
  so-called Kashiwara operators interact beautifully with the combinatorics of Young tableaux and with the
  Robinson--Schensted--Knuth correspondence and so provide powerful combinatorial tools to work with them. This paper
  constructs an analogous `quasi-crystal' structure for the hypoplactic monoid, whose elements can be identified with
  quasi-ribbon tableaux and whose connection with the theory of quasi-symmetric functions echoes the connection of the
  plactic monoid with the theory of symmetric functions. This quasi-crystal structure and the associated quasi-Kashiwara
  operators are shown to interact just as neatly with the combinatorics of quasi-ribbon tableaux and with the
  hypoplactic version of the Robinson--Schensted--Knuth correspondence. A study is then made of the interaction of the crystal
  graph of the plactic monoid and the quasi-crystal graph for the hypoplactic monoid. Finally, the quasi-crystal
  structure is applied to prove some new results about the hypoplactic monoid.
\end{abstract}

\keywords{hypoplactic, quasi-ribbon tableau, Robinson--Schensted--Knuth correspondence, Kashiwara operator, crystal graph}
\subjclass[2010]{Primary 05E15; Secondary 05E05, 20M05}

\maketitle

\tableofcontents

\section{Introduction}

A crystal basis, in the sense of Kashiwara \cite{kashiwara_oncrystalbases,kashiwara_crystalizing}, is (informally) a
basis for a representation of a suitable algebra on which the generators have a particularly neat action. It gives rise,
via tensor products, to the crystal graph, which carries a natural monoid structure given by identifying words labelling
vertices that appear in the same position of isomorphic components. The ubiquitous plactic monoid, whose elements can be
viewed as semistandard Young tableaux, and which appears in such diverse contexts as symmetric functions
\cite{macdonald_symmetric}, representation theory and algebraic combinatorics \cite{fulton_young,lothaire_algebraic},
Kostka--Foulkes polynomials \cite{lascoux_plaxique,lascoux_foulkes}, Schubert polynomials
\cite{lascoux_schubert,lascoux_tableaux}, and musical theory \cite{jedrzejewski_plactic}, arises in this way from the
crystal basis for the $q$-analogue of the special linear Lie algebra $\mathfrak{sl}_{n}$. The crystal graph and the
associated Kashiwara operators interact beautifully with the combinatorics of Young tableaux and with the
Robinson--Schensted--Knuth correspondence and so provide powerful combinatorial tools to work with them.

This paper is dedicated to constructing an analogue of this crystal structure for the monoid of quasi-ribbon tableaux:
the so-called hypoplactic monoid. To explain this aim in more detail, and in particular to describe the properties such
an analogue should enjoy, it is necessary to briefly recapitulate some of the theory of crystals and of Young tableaux.

The plactic monoid of rank $n$ (where $n \in \nset$) arises by factoring the free monoid $\aA_n^*$ over the ordered
alphabet $\aA_n = \set{1 < \ldots < n}$ by a relation $\placcong$, which can be defined in various ways. Using
Schensted's algorithm \cite{schensted_longest}, which was originally intended to find longest increasing and decreasing
subsequences of a given sequence, one can compute a (semistandard) Young tableau $\P{w}$ from a word $w \in \aA_n^*$ and so define
$\placcong$ as relating those words that yield the same Young tableau. Knuth made a study of correspondences between
Young tableaux and non-negative integer matrices and gave defining relations for the plactic monoid
\cite{knuth_permutations}; the relation $\placcong$ can be viewed as the congruence generated by these defining
relations.

Lascoux \& Sch\"utzenberger \cite{lascoux_plaxique} began the systematic study of the plactic monoid, and, as remarked
above, connections have emerged with myriad areas of mathematics, which is one of the reasons Sch\"utzen\-berger
proclaimed it `one of the most fundamental monoids in algebra' \cite{schutzenberger_pour}. Of particular interest for us
is how it arises from the crystal basis for the $q$-analogue of the special linear Lie algebra $\mathfrak{sl}_{n}$ (that
is, the type $A_{n+1}$ simple Lie algebra), which links it to Kashiwara's theory of crystal bases
\cite{kashiwara_classical}. Isomorphisms between connected components of the crystal graph correspond to the relation
$\placcong$. Viewed on a purely combinatorial level, the Kashiwara operators and crystal graph are important tools for
working with the plactic monoid. (Indeed, in this context they are sometimes called `coplactic' operators
\cite[ch.~5]{lothaire_algebraic}, being in a sense `orthogonal' to $\placcong$.) Similarly, crystal theory can also be
used to analyse the analogous `plactic monoids' that index representations of the $q$-analogues of symplectic Lie
algebras $\mathfrak{sp}_n$ (the type $C_n$ simple Lie algebra), special orthogonal Lie algebras of odd and even rank
$\mathfrak{so}_{2n+1}$ and $\mathfrak{so}_{2n}$ (the type $B_n$ and $D_n$ simple Lie algebras), and the exceptional
simple Lie algebra $G_2$ (see \cite{kim_insertion,lecouvey_cn,lecouvey_bndn} and the survey \cite{lecouvey_survey}). The
present authors and Gray applied this crystal theory to construct finite complete rewriting systems and biautomatic
structures for all these plactic monoids \cite{cgm_crystal}; Hage independently constructed a finite complete rewriting
system for the plactic monoid of type $C_n$ \cite{hage_typec}.

As is described in detail later in the paper, the crystal structure meshes neatly with the Robinson--Schensted--Knuth
correspondence. This correspondence is a bijection $w \leftrightarrow (P,Q)$ where $w$ is a word over $\aA_n$, and $P$
is a semistandard Young tableau with entries in $\aA_n$ and $Q$ is a standard Young tableau of the same shape. (The
semistandard Young tableau $P$ is the tableau $\P{w}$ computed by Schensted's algorithm; the standard tableau $Q$ can be
computed in parallel.) Essentially, the standard Young tableau $Q$ corresponds to the connected component of the crystal
graph in which the word $w$ lies, and the semistandard Young tableau $P$ corresponds to the position of $w$ in that
component. By holding $Q$ fixed and varying $P$ over semistandard tableaux of the same shape, one obtains all words in a
given connected component. Consequently, all words in a given connected component correspond to tableaux of the same
shape.

In summary, there are three equivalent approaches to the plactic monoid:
\begin{itemize}
\item[P1.] \textit{Generators and relations}: the plactic monoid is defined by the presentation
  $\pres[\big]{\aA_n}{\drel{R}_\plac}$, where
\begin{align*}
\drel{R}_\plac ={}& \gset[\big]{(acb,cab)}{a \leq b < c} \\
&\cup \gset[\big]{(bac, bca)}{a < b \leq c}.
\end{align*}
Equivalently, $\placcong$ is the congruence on $\aA_n^*$ generated by $\drel{R}_\plac$.
\item[P2.] \textit{Tableaux and insertion}: the relation $\placcong$ is defined by $u \placcong v$ if and only if $\P{u} = \P{v}$,
  where $\P{\cdot}$ is the Young tableau computed using the Schensted insertion algorithm (see
  \fullref{Algorithm}{alg:placticinsert} below).
\item[P3.] \textit{Crystals}: the relation $\placcong$ is defined by $u \placcong v$ if and only if there is a crystal
  isomorphism between connected components of the crystal graph that maps $u$ onto $v$.
\end{itemize}

The defining relations in $\drel{R}_\plac$ (known as the Knuth relations) are the reverse of the ones given in
\cite{cgm_crystal}. This is because, in the context of crystal bases, the convention for tensor products gives rise to a
`plactic monoid' that is actually anti-isomorphic to the usual notion of plactic monoid. Since this paper is mainly
concerned with combinatorics, rather than representation theory, it follows Shimozono \cite{shimozono_crystals} in using
the convention that is compatible with the usual notions of Young tableaux and the Robinson--Schensted--Knuth correspondence.

Another important aspect of the plactic monoid is its connection to the theory of symmetric polynomials. The Schur
polynomials with $n$ indeterminates, which are the irreducible polynomial characters of the general linear group
$\mathrm{GL}_n(\cset)$, are indexed by shapes of Young tableaux with entries in $\aA_n$, and they form a $\zset$-basis
for the ring of symmetric polynomials in $n$ indeterminates. The plactic monoid was applied to give the first rigorous
proof of the Littlewood--Richardson rule (see \cite{littlewood_group} and \cite[Appendix]{green_polynomial}), which is a
combinatorial rule for expressing a product of two Schur polynomials as a linear combination of Schur polynomials.

In recent years, there has emerged a substantial theory of non-commutative symmetric functions and quasi-symmetric functions;
see, for example, \cite{gelfand_noncommutative1,krob_noncommutative4,krob_noncommutative5}. Of particular interest for
this paper is the notion of quasi-ribbon polynomials, which form a basis for the ring of quasi-symmetric polynomials, just
as the Schur polynomials form a basis for the ring of symmetric polynomials. The quasi-ribbon polynomials are indexed by the
so-called quasi-ribbon tableaux. These quasi-ribbon tableaux have an insertion algorithm and an associated monoid called
the hypoplactic monoid, which was first studied in depth by Novelli \cite{novelli_hypoplactic}. The hypoplactic monoid
of rank $n$ arises by factoring the free monoid $\aA_n^*$ by a relation $\hypocong$, which, like $\placcong$, can be
defined in various ways. Using the insertion algorithm one can compute a quasi-ribbon tableau from a word and so define
$\hypocong$ as relating those words that yield the same quasi-ribbon tableau. Alternatively, the relation $\hypocong$ can be
viewed as the congruence generated by certain defining relations.

Thus there are two equivalent approaches to the hypoplactic monoid:
\begin{itemize}
\item[H1.] \textit{Generators and relations}: the hypoplactic monoid is defined by the presentation
  $\pres[\big]{\aA_n}{\drel{R}_\hypo}$, where
\begin{align*}
\drel{R}_\hypo ={}& \drel{R}_\plac \\
&\cup \gset[\big]{(cadb,acbd)}{a \leq b < c \leq d} \\
&\cup \gset[\big]{(bdac,dbca)}{a < b \leq c < d}.
\end{align*}
Equivalently, $\hypocong$ is the congruence on the free monoid $\aA_n^*$ generated by $\drel{R}_\hypo$.
\item[H2.] \textit{Tableaux and insertion}: the relation $\hypocong$ is defined by $u \hypocong v$ if and only if $\qrtableau{u} = \qrtableau{v}$,
  where $\qrtableau{{\cdot}}$ is the quasi-ribbon tableau computed using the Krob--Thibon insertion algorithm (see
  \fullref{Algorithm}{alg:hypoplacticinsert} below).
\end{itemize}
Krob \& Thibon \cite{krob_noncommutative4} proved the equivalence of H1 and H2, which are the direct analogues of P1 and
P2. Owing to the previous success in detaching crystal basis theory from its representation-theoretic foundation and
using it as a combinatorial tool for working with Young tableaux and plactic monoids, it seems worthwhile to try to find
an analogue of P3 for the hypoplactic monoid. Such an analogue should have the following form:
\begin{itemize}
\item[H3.] \textit{Quasi-crystals}: the relation $\hypocong$ is defined by $u \hypocong v$ if and only if there is a quasi-crystal
  isomorphism between connected components of the quasi-crystal graph that maps $u$ onto $v$.
\end{itemize}
The aim of this paper is to define quasi-Kashiwara operators on a purely combinatorial level, and so construct a
quasi-crystal graph as required for H3. As will be shown, the interaction of the combinatorics of quasi-ribbon tableaux
with this quasi-crystal graph will be a close analogue of the interaction of the combinatorics of Young tableaux with
the crystal graph (and is, in the authors' view, just as elegant).

In fact, a related notion of `quasi-crystal' is found in Krob \& Thibon \cite{krob_noncommutative5}; see also
\cite{hivert_hecke}. However, the Krob--Thibon quasi-crystal describes the restriction to quasi-ribbon tableaux (or more
precisely words corresponding to quasi-ribbon tableaux) of the action of the usual Kashiwara operators: it does not
apply to all words and so does not give rise to isomorphisms that can be used to define the relation $\hypocong$.

The paper is organized as follows: \fullref{Section}{sec:preliminaries} sets up notation and discusses some
preliminaries relating to words, partitions, compositions, and the notion of weight. \fullref{Section}{sec:plactic}
reviews, without proof, the basic theory of Young tableaux, the plactic monoid, Kashiwara operators, and the crystal
graph; the aim is to gather the elegant properties of the crystal structure for the plactic monoid that should be
mirrored in the quasi-crystal structure for the hypoplactic monoid. These properties will also be used in the study of
the interactions of the crystal and quasi-crystal graphs. \fullref{Section}{sec:quasiribbontableau} recalls the
definitions of quasi-ribbon tableaux and the hypoplactic monoid. \fullref{Section}{sec:quasikashiwara} states the
definition of the quasi-Kashiwara operators and the quasi-crystal graph, and shows that isomorphisms between connected
components of this quasi-crystal graph give rise to a congruence on the free
monoid. \fullref{Section}{sec:quasicrystalandhypo} proves that the corresponding factor monoid is the hypoplactic
monoid. \textit{En route}, some of the properties of the quasi-crystal graph are
established. \fullref{Section}{sec:robinsonschensted} studies how the quasi-crystal graph interacts with the hypoplactic
version of the Robinson--Schensted--Knuth correspondence. \fullref{Section}{sec:quasicrystalstructure} systematically studies
the structure of the quasi-crystal graph. It turns out to be a subgraph of the crystal graph for the plactic monoid, and
the interplay of the subgraph and graph has some very neat properties. Finally, the quasi-crystal structure is applied
to prove some new results about the hypoplactic monoid in \fullref{Section}{sec:applications}, including an analogy of
the hook-length formula.

\section{Preliminaries and notation}
\label{sec:preliminaries}

\subsection{Alphabets and words}
\label{subsec:words}

Recall that for any alphabet $X$, the free monoid (that is, the set of all words, including the empty word) on the
alphabet $X$ is denoted $X^*$. The empty word is denoted $\emptyword$. For any $u \in X^*$, the length of $u$ is denoted
$|u|$, and, for any $x \in X$, the number of times the symbol $x$ appears in $u$ is denoted $|u|_x$. Suppose
$u = u_1\cdots u_k \in X^*$ (where $u_h \in X$). For any $i \leq j$, the word $u_i\cdots u_j$ is a \defterm{factor} of
$u$. For any $i_1,\ldots,i_m \in \set{1,\ldots,k}$ such that $i_1 < i_2 < \ldots < i_m$, the word
$u_{i_1}u_{i_2}\cdots u_{i_m}$ is a \defterm{subsequence} of $u$. Note that factors must be made up of consecutive
letters, whereas subsequences may be made up of non-consecutive letters. [To minimize potential confusion, the term
`subword' is not used in this paper, since it tends to be synonymous with `factor' in semigroup theory, but with
`subsequence' in combinatorics on words.]

For further background on the free monoid, see \cite{howie_fundamentals}; for presentations, see
\cite{higgins_techniques,ruskuc_phd}.

Throughout this paper, $\aA$ will be the set of natural numbers viewed as an infinite ordered alphabet:
$\aA = \set{1 < 2 < 3 < \ldots}$. Further, $n$ will be a natural number and $\aA_n$ will be the set of the first $n$
natural numbers viewed as an ordered alphabet: $\aA_n = \set{1 < 2 < \ldots < n}$.

A word $u \in \aA^*$ is \defterm{standard} if it contains each symbol in $\set{1,\ldots,|u|}$ exactly once. Let
$u = u_1\cdots u_k$ be a standard word. The word $u$ is identified with the permutation $h \mapsto u_h$, and $u^{-1}$
denotes the inverse of this permutation (which is also identifed with a standard word).

Further, the \defterm{descent set} of the standard word $u$ is
$D(u) = \gset[\big]{h \in \set{1,\ldots,|u|-1}}{u_h > u_{h+1}}$.

Let $u \in \aA^*$. The \defterm{standardization} of $u$, denoted $\std{u}$, is the standard word obtained by the
following process: read $u$ from left to right and, for each $a \in \aA$, attach a subscript $h$ to the $h$-th
appearance of $a$. Symbols with attached subscripts are ordered by
\[
a_i < b_j \iff \parens{a < b} \lor \parens[\big]{\parens{a = b} \land \parens{i < j}}.
\]
Replace each symbol with an attached subscript in $u$ with the corresponding symbol of the same rank from $\aA$. The
resulting word is $\std{u}$. For example:
\begin{align*}
w ={}& \ghost{2}{2_1}\ghost{4}{4_1}\ghost{3}{3_1}\ghost{2}{2_2}\ghost{4}{4_1}\ghost{5}{5_1}\ghost{5}{5_2}\ghost{6}{6_1}\ghost{5}{5_3} \\
\rightsquigarrow{}& 2_14_13_12_24_25_15_26_15_3 \\
\std{w} ={}& \ghost{1}{2_1}\ghost{4}{4_1}\ghost{3}{3_1}\ghost{2}{2_2}\ghost{5}{4_1}\ghost{6}{5_1}\ghost{7}{5_2}\ghost{9}{6_1}\ghost{8}{5_3}
\end{align*}
For further background relating to standard words and standardizaton, see \cite[\S~2]{novelli_hypoplactic}.

\subsection{Compositions and partitions}
\label{subsec:comppart}

A \defterm{weak composition} $\alpha$ is a finite sequence $(\alpha_1,\ldots,\alpha_m)$ with terms in
$\nset \cup \set{0}$. The terms $\alpha_h$ up to the last non-zero term are the \defterm{parts} of $\alpha$. The
\defterm{length} of $\alpha$, denoted $\clen\alpha$, is the number of its parts. The \defterm{weight} of $\alpha$,
denoted $\cwt\alpha$, is the sum of its parts (or, equivalently, of its terms): $\cwt\alpha = \alpha_1+\cdots+\alpha_m$.
For example, if $\alpha = (3,0,4,1,0)$, then $\clen\alpha = 4$ and $\cwt\alpha=8$.  Identify weak compositions whose
parts are the same (that is, that differ only in a tail of terms $0$). For example, $(3,1,5,2)$ is identified with
$(3,1,5,2,0)$ and $(3,1,5,2,0,0,0)$. (Note that this identification does not create ambiguity in the notions of parts
and weight.) A \defterm{composition} is a weak composition whose parts are all in $\nset$. For a composition
$\alpha = (\alpha_1,\ldots,\alpha_{\clen\alpha})$, define
$D(\alpha) = \set{\alpha_1,\alpha_1+\alpha_2,\ldots,\alpha_1+\ldots+\alpha_{\clen{\alpha-}1}}$. For a standard word
$u \in \aA^*$, define $\descomp{u}$ to be the unique composition of weight $\cwt{u}$ such that $D(\descomp{u}) = D(u)$,
where $D(u)$ is as defined in \fullref{Subsection}{subsec:words}. For example, if $u = 143256798$, then
$D(u) = \set{2,3,8}$ and so $\descomp{u} = (2,1,5,1)$.

A \defterm{partition} $\lambda$ is a non-increasing finite sequence $(\lambda_1,\ldots,\lambda_m)$ with terms in
$\nset$.
% Identify partitions that differ only in a string of zeroes at the end. For example, the partitions
% $(4,2,1)$ and $(4,2,1,0,0)$ are identified.
The terms $\lambda_h$ are the \defterm{parts} of $\lambda$. The \defterm{length} of $\lambda$, denoted
$\plen\lambda$, is the number of its parts. The \defterm{weight} of $\lambda$, denoted $\pwt\lambda$, is the sum of
its parts: $\pwt\lambda = \lambda_1+\cdots+\lambda_m$. For example, if $\lambda = (5,3,2,2)$, then $\plen\lambda = 4$
and $\pwt\lambda = 12$.

\subsection{Weight}
\label{subsec:weight}

The \defterm{weight function} $\wtlit$ is, informally, the function that counts the number of times each symbol
appears in a word. More formally, $\wtlit$ is defined by
\[
\wtlit : \aA^* \to (\nset\cup\set{0})^\aA\qquad u \mapsto \parens[\big]{|u|_1,|u|_2,|u|_3,\ldots}.
\]
Clearly, $\wt{\cdot}$ has an infinite tail of components $0$, so only the prefix up to the last non-zero term is
considered; thus $\wt{\cdot}$ is a weak composition. For example, $\wt{542164325224} = (1,4,1,3,2,1)$. See
\cite[\S~2.1]{shimozono_crystals} for a discussion of the basic properties of weight functions.

Weights are compared using the following order:
\begin{equation}
\label{eq:weightorder}
\begin{aligned}
&(\alpha_1,\alpha_2,\ldots) \leq (\beta_1,\beta_2,\ldots) \\
&\qquad\iff (\forall k \in \nset)\parens[\Big]{\sum_{i=1}^k \alpha_i \leq \sum_{i=1}^k \beta_i}.
\end{aligned}
\end{equation}
When $\pwt\alpha = \pwt\beta$, this is the \defterm{dominance order} of partitions \cite[\S~7.2]{stanley_enumerative2}.
When $\wt{u} < \wt{v}$, one says that $v$ has \defterm{higher weight} than $u$ (and $u$ has \defterm{lower weight} than
$v$). For example,
\begin{multline*}
\wt{542164325224} = (1,4,1,3,2,1) \\
< (5,3,2,2,0,0) = \wt{432143212111};
\end{multline*}
that is, $432143212111$ has higher weight than $542164325224$. For the purposes of this paper, it will generally not be
necessary to compare weights using \eqref{eq:weightorder}; the important fact will be how the Kashiwara operators affect
weight.

\section{Crystals and the plactic monoid}
\label{sec:plactic}

This section recalls in detail the three approaches to the plactic monoid discussed in the introduction, and discusses
further the very elegant interaction of the crystal structure with the combinatorics of Young tableaux and in particular
with the Robinson--Schensted--Knuth correspondence. The aim is to lay out the various properties one would hope for in the
quasi-crystal structure for the hypoplactic monoid.

\subsection{Young tableaux and insertion}
\label{subsec:youngtableaux}

The \defterm{Young diagram} of shape $\lambda$, where $\lambda$ is a partition, is a grid of boxes, with $\lambda_h$
boxes in the $h$-th row, for $h = 1,\ldots,\plen\lambda$, with rows left-aligned. For example, the Young diagram of shape $(5,3,2,2)$ is
\[
\tableau{
\null \& \null \& \null \& \null \& \null \\
\null \& \null \& \null \\
\null \& \null \\
\null \& \null \\
}
\]
A Young diagram of shape $(1,1,\ldots,1)$ is said to be a \defterm{column diagram} or to have \defterm{column
  shape}. Note that, in this paper, Young diagrams are top-left aligned, with longer rows at the top and the parts of
the partition specifying row lengths from top to bottom. There is an alternative convention of bottom-left aligned Young
diagrams, where longer rows are at the bottom.

A \defterm{Young tableau} is a Young diagram that is filled with symbols from $\aA$ so that the entries in each row are
non-decreasing from left to right, and the entries in each column are increasing from top to bottom. For example, a
Young tableau of shape $(5,3,2,2)$ is
\begin{equation}
\label{eq:youngtableaueg}
\tableau{
1 \& 2 \& 2 \& 2 \& 4 \\
2 \& 3 \& 5 \\
4 \& 4 \\
5 \& 6 \\
}
\end{equation}
A Young tableau of shape $(1,1,\ldots,1)$ is a \defterm{column}. (That is, a column is a Young tableau of column shape.)

A \defterm{standard Young tableau} of shape $\lambda$ is a Young diagram that is filled with symbols from
$\set{1,\ldots,\pwt\lambda}$, with each symbol appearing exactly once, so that the entries in each row are increasing from
left to right, and the entries in each column are increasing from top to bottom. For example, a standard Young tableau
of shape $(5,3,2,2)$ is
\[
\tableau{
1 \& 3 \& 6 \& 7 \& 10 \\
2 \& 4 \& 12 \\
5 \& 9 \\
8 \& 11 \\
}
\]
A \defterm{tabloid} is an array formed by concatenating columns, filled with symbols from $\aA$ so that the entries in
each column are increasing from top to bottom. (Notice that there is no restriction on the relative heights of the
columns; nor is there a condition on the order of entries in a row.) An example of a tabloid is
\begin{equation}
\label{eq:tabloideg}
\tableau{
2 \& 1 \& 4 \& 1\& 2 \\
5 \& 3 \&   \& 2 \\
  \& 4 \&   \& 4 \\
  \& 6 \&   \& 5 \\
}
\end{equation}
Note that a tableau is a special kind of tabloid. The shape of a tabloid cannot in general be expressed using a partition.

The \defterm{column reading} $\colreading{T}$ of a tabloid $T$ is the word in $\aA^*$ obtained by proceeding through the columns,
from leftmost to rightmost, and reading each column from bottom to top. For example, the column readings of the tableau
\eqref{eq:youngtableaueg} and the tabloid \eqref{eq:tabloideg} are respectively $5421\,6432\,52\,2\,4$ and
$52\,6431\,4\,5421\,2$ (where the spaces are simply for clarity, to show readings of individual columns), as illustrated
below:
\[
\begin{tikzpicture}
  \matrix[tableaumatrix] (t) {
    1 \& 2 \& 2 \& 2 \& 4 \\
    2 \& 3 \& 5 \\
    4 \& 4 \\
    5 \& 6 \\
  };
  \draw[gray,thick,rounded corners=3mm,->]
  ($ (t-4-1) + (1.5mm,-6mm) $) -- ($ (t-1-1) + (1.5mm,5mm) $) --
  ($ (t-4-2) + (1.5mm,-5mm) $) -- ($ (t-1-2) + (1.5mm,5mm) $) --
  ($ (t-2-3) + (1.5mm,-5mm) $) -- ($ (t-1-3) + (1.5mm,5mm) $) --
  ($ (t-1-4) + (1.5mm,-5mm) $) -- ($ (t-1-4) + (1.5mm,5mm) $) --
  ($ (t-1-5) + (1.5mm,-5mm) $) -- ($ (t-1-5) + (1.5mm,6mm) $);
\end{tikzpicture}
\qquad
\begin{tikzpicture}
  \matrix[tableaumatrix] (t) {
    2 \& 1 \& 4 \& 1\& 2   \\
    5 \& 3 \&   \& 2       \\
      \& 4 \&   \& 4       \\
      \& 6 \&   \& 5       \\
  };
  \draw[gray,thick,rounded corners=3mm,->]
  ($ (t-2-1) + (1.5mm,-16mm) $) -- ($ (t-1-1) + (1.5mm,5mm) $) --
  ($ (t-4-2) + (1.5mm,-5mm) $) -- ($ (t-1-2) + (1.5mm,5mm) $) --
  ($ (t-1-3) + (1.5mm,-5mm) $) -- ($ (t-1-3) + (1.5mm,5mm) $) --
  ($ (t-4-4) + (1.5mm,-5mm) $) -- ($ (t-1-4) + (1.5mm,5mm) $) --
  ($ (t-1-5) + (1.5mm,-5mm) $) -- ($ (t-1-5) + (1.5mm,6mm) $);
\end{tikzpicture}
\]
Let $w \in \aA^*$, and let $w^{(1)}\cdots w^{(m)}$ be the factorization of $w$ into maximal decreasing factors. Let
$\tabloid{w}$ be the tabloid whose $h$-th column has height $|w^{(h)}|$ and is filled with the symbols of $w^{(h)}$, for
$h = 1,\ldots,m$. Then $\colreading{\tabloid{w}} = w$. (Note that the notion of reading described here is the one normally used in the
study of Young tableaux and the plactic monoid, and is the opposite of the `Japanese reading' used in the theory of
crystals. Throughout the paper, definitions follow the convention compatible with the reading defined here. The
resulting differences from the usual practices in crystal theory will be explicitly noted.)

If $w$ is the column reading of some Young tableau $T$, it is called a \defterm{tableau word}. Note that not all words
in $\aA^*$ are tableau words. For example, $343$ is not a tableau word, since $\tableau{3 \& 3 \\ 4 \\}$ and
$\tableau{3 \& 3 \& 4 \\}$ are the only tableaux containing the correct symbols, and neither of these has column
reading $343$. The word $w$ is a tableau word if and only if $\tabloid{w}$ is a tableau.

The plactic monoid arises from an algorithm that computes a Young tableau $\P{w}$ from a word $w \in \aA^*$.

\begin{algorithm}[Schensted's algorithm]
\label{alg:placticinsertone}
~\par\nobreak
\textit{Input:} A Young tableau $T$ and a symbol $a \in \aA$.

\textit{Output:} A Young tableau $T \leftarrow a$.

\textit{Method:}
\begin{enumerate}

\item If $a$ is greater than or equal to every entry in the topmost row of $T$, add $a$ as an entry at the rightmost end of
  $T$ and output the resulting tableau.

\item Otherwise, let $z$ be the leftmost entry in the top row of $T$ that is strictly greater than $a$. Replace $z$ by
  $a$ in the topmost row and recursively insert $z$ into the tableau formed by the rows of $T$ below the topmost. (Note
  that the recursion may end with an insertion into an `empty row' below the existing rows of $T$.)

\end{enumerate}
\end{algorithm}

Using an iterative form of this algorithm, one can start from a word $a_1\cdots a_k$ (where $a_i \in \aA$) and compute a
Young tableau $\P{a_1\cdots a_k}$. Essentially, one simply starts with the empty tableau and inserts the symbols $a_1$,
\ldots, $a_k$ in order: However, the algorithm described below also computes a standard Young tableau
$\Q{a_1\cdots a_k}$ that is used in the Robinson--Schensted--Knuth correspondence, which will shortly be described:

\begin{algorithm}
\label{alg:placticinsert}
~\par\nobreak
\textit{Input:} A word $a_1\cdots a_k$, where $a_i \in \aA$.

\textit{Output:} A Young tableau $\P{a_1\cdots a_k}$ and a
standard Young tableau $\Q{a_1\cdots a_k}$.

\textit{Method:} Start with an empty Young tableau $P_0$ and an empty standard Young tableau $Q_0$. For each $i = 1$,
\ldots, $k$, insert the symbol $a_i$ into $P_{i-1}$ as per \fullref{Algorithm}{alg:placticinsertone}; let $P_i$ be the
resulting Young tableau. Add a cell filled with $i$ to the standard tableaux $Q_{i-1}$ in the same place as the unique
cell that lies in $P_i$ but not in $P_{i-1}$; let $Q_i$ be the resulting standard Young tableau.

Output $P_k$ for $\P{a_1\cdots a_k}$ and $Q_k$ as $\Q{a_1\cdots a_k}$.
\end{algorithm}

For example, the sequence of pairs $(P_i,Q_i)$ produced during the application of
\fullref{Algorithm}{alg:hypoplacticinsert} to the word $2213$ is
\begin{multline*}
\parens{\;,\;},\;
\parens[\big]{\tableau{2\\},\tableau{1\\}},\;
\parens*{\tableau{2\&2\\},\tableau{1\&2\\}},\;\\
\parens*{\tableau{1\&2\\2\\},\tableau{1\&2\\3\\}},\;
\parens*{\tableau{1\&2\&3\\2\\},\tableau{1\&2\&4\\3\\}}.
\end{multline*}
Therefore $\P{4323} = \tableau{1\&2\&3\\2\\}$ and $\Q{4323} = \tableau{1\&2\&4\\3\\}$. It is straightforward to see that
the map $w \mapsto \parens[\big]{P(w),Q(w)}$ is a bijection between words in $\aA^*$ and pairs consisting of a Young
tableau over $\aA$ and a standard Young tableau of the same shape. This is the celebrated
\defterm{Robinson--Schensted--Knuth correspondence} (see, for example, \cite[Ch.~4]{fulton_young} or
\cite[\S~7.11]{stanley_enumerative2}.

The possible definition (approach~P2 in the introduction) of the relation $\placcong$ using tableaux and insertion is
the following:
\[
u \placcong v \iff \P{u} = \P{v}.
\]
Using this as a definition, it follows that $\placcong$ is a congruence on $\aA^*$ \cite{knuth_permutations}. The factor
monoid $\aA^*/{\placcong}$ is the \defterm{plactic monoid} and is denoted $\plac$. The relation $\placcong$ is the
\defterm{plactic congruence} on $\aA^*$. The congruence $\placcong$ naturally restricts to a congruence on $\aA_n^*$,
and the factor monoid $\aA_n^*/{\placcong}$ is the \defterm{plactic monoid of rank $n$} and is denoted $\plac_n$.

If $w$ is a tableau word, then $w = \colreading{\P{w}}$ and $\tabloid{w} = \P{w}$. Thus the tableau words in $\aA^*$
form a cross-section (or set of normal forms) for $\plac$, and the tableau words in $\aA_n^*$ form a cross-section for
$\plac_n$.

\subsection{Kashiwara operators and the crystal graph}
\label{subsec:kashiwara}

The following discussion of crystals and Kashiwara operators is restricted to the context of $\plac_n$. For a more
general introduction to crystal bases, see \cite{cgm_crystal}.

The \defterm{Kashiwara operators} $\ke_i$ and $\kf_i$, where $i \in \set{1,\ldots,n-1}$, are partially-defined operators
on $\aA_n^*$. In representation-theoretic terms, the operators $\ke_i$ and $\kf_i$ act on a tensor product by acting on
a single tensor factor \cite[\S~4.4]{hong_quantumgroups}. This action can be described in a combinatorial way using the
the so-called signature or bracketing rule. This paper describes the action directly using this rule, since the the
analogous quasi-Kashiwara operators are defined by modifying this rule.

The definitions of $\ke_i$ and $\kf_i$ start from the \defterm{crystal basis} for $\plac_n$, which will form a connected
component of the crystal graph:
\begin{equation}
\label{eq:placcrystalbasis}
\includegraphics{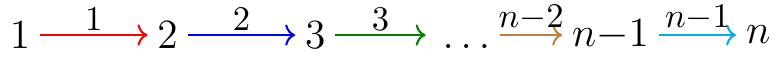}
% \begin{tikzpicture}[bigcrystal,labelledcolouredcrystaledges,baseline=(p1.base)]
%   %
%   \draw
%   (1,0) node (p1) {$1$}
%   ++(1,0) node (p2) {$2$}
%   ++(1,0) node (p3) {$3$}
%   ++(1,0) node (pdots) {$\mathrlap{\phantom{1}}\ldots$}
%   ++(1,0) node (pn) {$n{-}1$}
%   ++(1,0) node (pnplus1) {$n$};
%   %
%   \draw[f1] (p1) to (p2);
%   \draw[f2] (p2) to (p3);
%   \draw[f3] (p3) to (pdots);
%   \draw[fn2] (pdots) to (pn);
%   \draw[fn1] (pn) to (pnplus1);
%   %
% \end{tikzpicture}
\end{equation}
Each operator $\kf_i$ is defined so that it `moves' a symbol $a$ forwards along a directed edge labelled by $i$ whenever
such an edge starts at $a$, and each operator $\ke_i$ is defined so that it `moves' a symbol $a$ backwards along a
directed edge labelled by $i$ whenever such an edge ends at $a$:
\[
\includegraphics{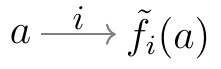}
% \begin{tikzpicture}[bigcrystal,labelledcolouredcrystaledges,baseline=(p1.base)]
%   %
%   \draw
%   (1,0) node (p1) {$a$}
%   ++(1,0) node (p2) {$\kf_i(a)$};
%   %
%   \draw[fi] (p1) to (p2);
%   %
% \end{tikzpicture}
\text{ and }
\includegraphics{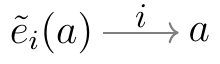}
% \begin{tikzpicture}[bigcrystal,labelledcolouredcrystaledges,baseline=(p1.base)]
%   %
%   \draw
%   (1,0) node (p1) {$\ke_i(a)$}
%   ++(1,0) node (p2) {$a$};
%   %
%   \draw[fi] (p1) to (p2);
%   %
% \end{tikzpicture}.
\]
Using the crystal basis given above, one sees that
\begin{align*}
\ke_i(i+1) &= i, &&\text{$\ke_i(x)$ is undefined for $x \neq i+1$;} \\
\kf_i(i)&=i+1, &&\text{$\kf_i(x)$ is undefined for $x \neq i$.}
\end{align*}
The definition is extended to $\aA_n^* \setminus \aA_n$ by the recursion
\begin{align*}
  \ke_i(uv) &= \begin{cases}
    \ke_i(u)\,v & \text{if $\kecount_i(u) > \kfcount_i(v)$;} \\
    u\, \ke_i(v) & \text{if $\kecount_i(u) \leq \kfcount_i(v)$,}
  \end{cases}\displaybreak[0]\\
  \kf_i(uv) &= \begin{cases}
    \kf_i(u)\,v & \text{if $\kecount_i(u) \geq \kfcount_i(v)$;} \\
    u\,\kf_i(v) & \text{if $\kecount_i(u) < \kfcount_i(v)$,}
  \end{cases}
\end{align*}
where $\kecount_i$ and $\kfcount_i$ are auxiliary maps defined by
\begin{align*}
  \kecount_i(w) & = \max\gset[\big]{k \in \nset\cup\set{0}}{\text{$\underbrace{\ke_i\cdots\ke_i}_{\text{$k$ times}}(w)$ is defined}}; \\
  \kfcount_i(w) & = \max\gset[\big]{k \in \nset\cup\set{0}}{\text{$\underbrace{\kf_i\cdots\kf_i}_{\text{$k$ times}}(w)$ is defined}}.
\end{align*}
Notice that this definition is not circular: the definitions of $\ke_i$ and $\kf_i$ depend, via $\kecount_i$ and
$\kfcount_i$, only on $\ke_i$ and $\kf_i$ applied to \emph{strictly shorter} words; the recursion terminates with
$\ke_i$ and $\kf_i$ applied to single letters from the alphabet $\aA_n$, which was defined using the crystal basis
\eqref{eq:placcrystalbasis}. (Note that this definition is in a sense the mirror image of
\cite[Theorem~1.14]{kashiwara_classical}, because of the choice of definition for readings of tableaux used in this
paper. Thus the definition of $\ke_i$ and $\kf_i$ is the same as \cite[p.~8]{shimozono_crystals}.)

Although it is not immediate from the definition, the operators $\ke_i$ and $\kf_i$ are well-defined. Furthermore,
$\ke_i$ and $\kf_i$ are mutually inverse whenever they are defined, in the sense that if $\ke_i(w)$ is defined, then
$w = \kf_i(\ke_i(w))$, and if $\kf_i(w)$ is defined, then $w = \ke_i(\kf_i(w))$.

The \defterm{crystal graph} for $\plac_n$, denoted $\Gamma(\plac_n)$, is the directed labelled graph with vertex set
$\aA_n^*$ and, for $w,w' \in \aA_n^*$, an edge from $w$ to $w'$ labelled by $i$ if and only if $w' = \kf_i(w)$ (or,
equivalently, $w = \ke_i(w')$). \fullref{Figure}{fig:plac3crystal} shows part of the crystal graph
$\Gamma(\plac_3)$.

\begin{figure}[t]
  \centering
  \includegraphics{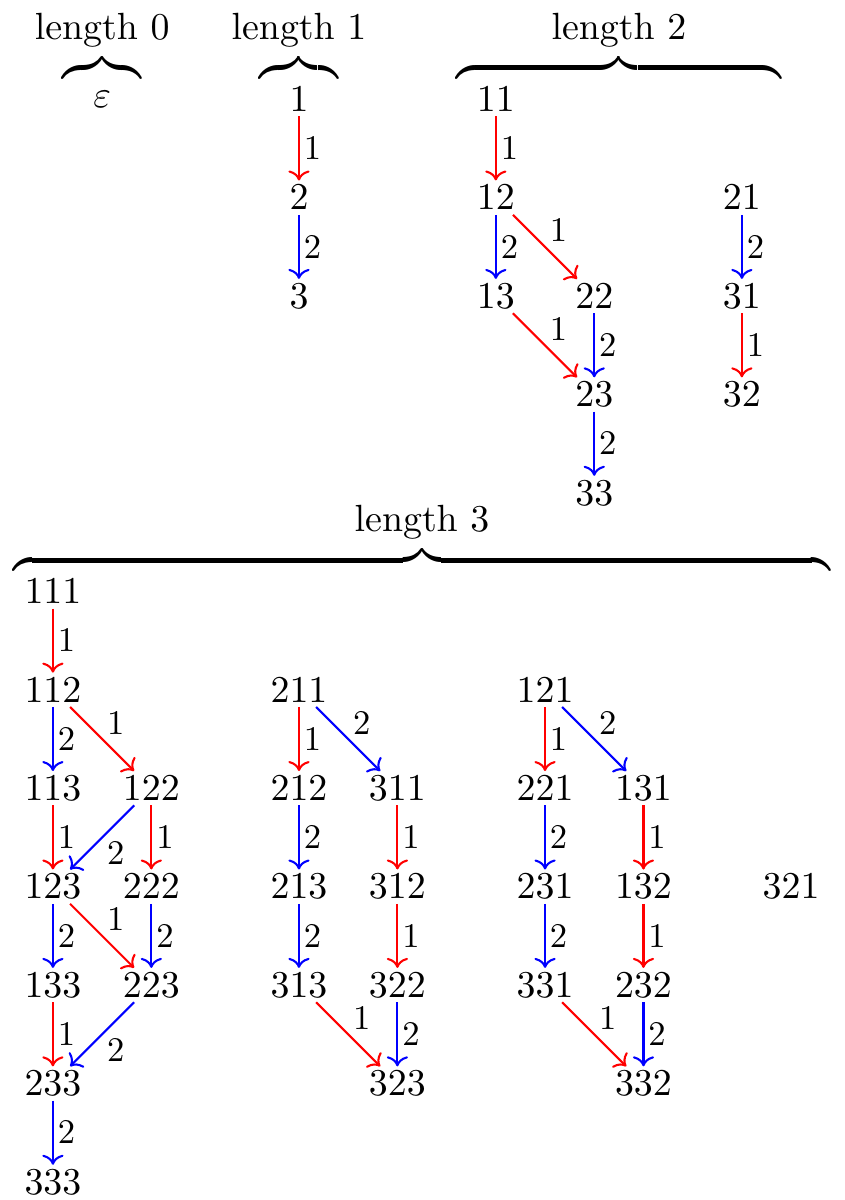}
  \caption{Part of the crystal graph for $\plac_3$. Note that each connected component consists of words of the same
    length. In particular, the empty word $\emptyword$ is an isolated vertex, and the words of length $1$ form a single
    connected component, which is the crystal basis for $\plac_3$. The two connected components whose highest-weight words are $211$
    and $121$ are isomorphic. However, the components consisting of the isolated vertex $321$ is not.}
  \label{fig:plac3crystal}
\end{figure}

Since the operators $\ke_i$ and $\kf_i$ preserve lengths of words, and since there are finitely many words in $\aA_n^*$
of each length, each connected component in the crystal graph must be finite.

For any $w \in \aA_n^*$, let $\Gamma(\plac_n,w)$ denote the connected component of $\Gamma(\plac_n)$ that contains the
vertex $w$. Notice that the crystal basis \eqref{eq:placcrystalbasis} is the connected component $\Gamma(\plac_n,1)$.

A \defterm{crystal isomorphism} between two connected components is a weight-preserving labelled digraph
isomorphism. That is, a map $\theta : \Gamma(\plac_n,w) \to \Gamma(\plac_n,w')$ is a crystal isomorphism if it has the
following properties:
\begin{itemize}
\item $\theta$ is bijective;
\item $\wt{\theta(u)} = \wt{u}$ for all $u \in \Gamma(\plac_n,w)$;
\item for all $u,v \in \Gamma(\plac_n,w)$, there is an edge $\includegraphics{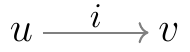}$
% $\begin{tikzpicture}[bigcrystal,labelledcolouredcrystaledges,baseline=(p1.base)]
%   %
%   \draw
%   (1,0) node (p1) {$u$}
%   ++(1,0) node (p2) {$v$};
%   %
%   \draw[fi] (p1) to (p2);
%   %
% \end{tikzpicture}$
if and only if there is an edge $\includegraphics{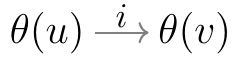}$.
% $\begin{tikzpicture}[bigcrystal,labelledcolouredcrystaledges,baseline=(p1.base)]
%   %
%   \draw
%   (1,0) node (p1) {$\theta(u)$}
%   ++(1,0) node (p2) {$\theta(v)$};
%   %
%   \draw[fi] (p1) to (p2);
%   %
% \end{tikzpicture}$.
\end{itemize}
The possible definition (approach~P3 in the introduction) of the relation $\placcong$ using the crystal graph
$\Gamma(\plac_n)$ is the following: two words in $\aA_n^*$ are related by $\placcong$ if and only if they lie in the
same place in isomorphic connected components. More formally, $w \placcong w'$ if and only if there is a crystal
isomorphism $\theta : \Gamma(\plac_n,w) \to \Gamma(\plac_n,w')$ such that $\theta(w) = w'$ (see, for example,
\cite{lecouvey_survey}). For example, $2213 \placcong 2231$, and these words appear in the same position in two of the
connected components shown in \fullref{Figure}{fig:plac3crystalisocomps}. (This figure also illustrates other properties
that will be discussed shortly.)

\begin{figure}[t]
  \centering
  \includegraphics{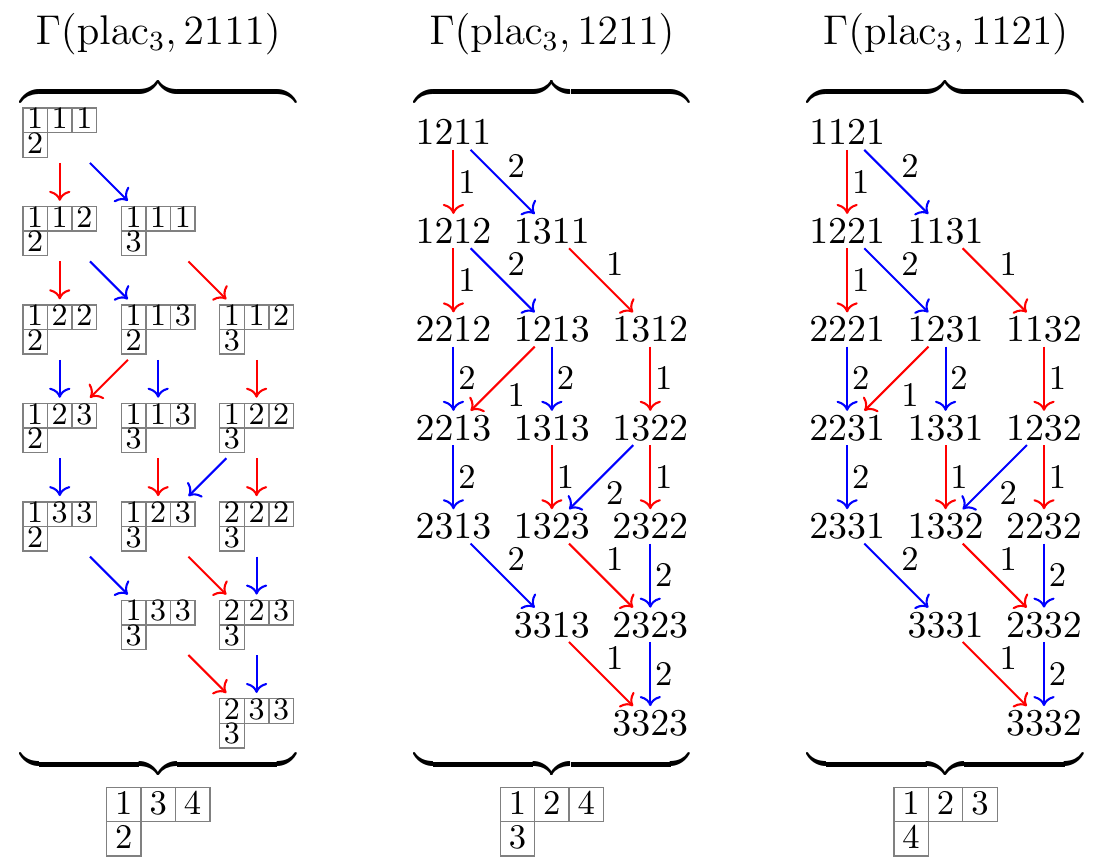}
  \caption{Three isomorphic components of the crystal graph for $\plac_3$. In the component containing column readings of tableaux, the
    tableaux themselves are shown instead of words. The standard tableaux below each component is $\Q{w}$ for all words
    $w$ in that component.}
  \label{fig:plac3crystalisocomps}
\end{figure}

\subsection{Computing the Kashiwara operators}
\label{subsec:computingkashiwara}

The recursive definition of the Kashiwara operators $\ke_i$ and $\kf_i$ given above is not particularly convenient for
practical computation. The following method, outlined in \cite{kashiwara_classical}, is more useful: Let
$i\in \set{1,\ldots,n-1}$ and let $w\in \aA_n^*$.  Form a new word in $\set{{+},{-}}^*$ by replacing each letter $i$ of
$w$ by the symbol $+$, each letter $i+1$ by the symbol $-$, and every other symbol with the empty word, keeping a record
of the original letter replaced by each symbol. Then delete factors ${-}{+}$ until no such factors remain: the
resulting word is ${+}^{\kfcount_i(w)}{-}^{\kecount_i(w)}$, and is denoted by $\rho_i(w)$. Note that factors ${+}{-}$
are \emph{not} deleted. (The method given in \cite{kashiwara_classical} involved deleting factors ${+}{-}$; again,
this difference is a consequence of the choice of convention for reading tableaux.)

If $\kecount_i(w)=0$, then $\ke_i(w)$ is undefined. If $\kecount_i(w)>0$ then one obtains $\ke_i(w)$ by taking the letter
$i+1$ which was replaced by the leftmost $-$ of $\rho_i(w)$ and changing it to $i$. If $\kfcount_i(w)=0$, then
$\kf_i(w)$ is undefined. If $\kfcount_i(w)>0$ then one obtains $\kf_i(w)$ by taking the letter $i$ which was replaced by
the rightmost $+$ of $\rho_i(w)$ and changing it to $i+1$.

For a purely combinatorial proof that this method of computation is correct, see
\cite[Proposition~2.1]{cgm_crystal}. (Note that \cite{cgm_crystal} uses the tableaux-reading convention from
representation theory, so that the result must be reflected to fit the convention used here.)

\subsection{Properties of the crystal graph}
\label{subsec:propertiesgammaplacn}

In the crystal graph $\Gamma(\plac_n)$, the length of the longest path consisting of edges labelled by
$i \in \set{1,\ldots,n-1}$ that ends at $w \in \aA_n^*$ is $\kecount_i(w)$. The length of the longest path consisting of
edges labelled by $i$ that starts at $w \in \aA_n^*$ is $\kfcount_i(w)$.

The operators $\ke_i$ and $\kf_i$ respectively increase and decrease weight whenever they are defined, in
the sense that if $\ke_i(w)$ is defined, then $\wt{\ke_i(w)} > \wt{w}$, and if $\kf_i(w)$ is defined, then
$\wt{\kf_i(w)} < \wt{w}$. This is because $\ke_i$ replaces a symbol $i+1$ with $i$ whenever it is defined, which
corresponds to decrementing the $i+1$-th component and incrementing the $i$-th component of the weight, which results in
an increase with respect to the order \eqref{eq:weightorder}. Similarly, $\kf_i$ replaces a symbol $i$ with $i+1$
whenever it is defined. For this reason, the $\ke_i$ and $\kf_i$ are respectively known as the Kashiwara
\defterm{raising} and \defterm{lowering} operators.

Every connected component in $\Gamma(\plac_n)$ contains a unique \defterm{highest-weight} vertex: a vertex whose weight
is higher than all other vertices in that component. This means that no Kashiwara raising operator $\ke_i$ is defined on
this vertex. See \cite[\S~2.4.2]{shimozono_crystals} for proofs and background. (The existence, but not the uniqueness,
of a highest-weight vertex is a consequence of the finiteness of connected components.)

Whenever they are defined, the operators $\ke_i$ and $\kf_i$ preserve the property of being a tableau word and the shape
of the corresponding tableau \cite{kashiwara_classical}. Furthermore, all the tableau words corresponding to tableaux of
a given shape with entries in $\aA_n$ lie in the same connected component. As shown in
\fullref{Figure}{fig:plac3crystalisocomps}, the left-hand component $\Gamma(\plac_3,2111)$ is made up of all the tableau
words corresponding to tableaux of shape $(3,1)$ with entries in $\aA_3$.

Each connected component in $\Gamma(\plac_n)$ corresponds to exactly one standard tableau, in the sense that
$\Q{w} = \Q{u}$ if and only if $u$ and $w$ lies in the same connected component of $\Gamma(\plac_n)$. In terms of the
bijection $w \mapsto (\P{w},\Q{w})$ of the Robinson--Schensted--Knuth correspondence, specifying $\Q{w}$ locates the particular
connected component $\Gamma(\plac_n,w)$, and specifying $\P{w}$ locates the word $w$ within that component.

Highest-weight words in $\Gamma(\plac_n)$, and in particular highest-weight tableau words, admit a useful
characterization as Yamanouchi words. A word $w_1\cdots w_m \in \aA_n^*$ (where $w_i \in \aA_n$) is a
\defterm{Yamanouchi word} if, for every $j = 1,\ldots,m$, the weight of the suffix $w_j\cdots w_m$ is a non-increasing
sequence (that is, a partition). Thus $1231$ is not a Yamanouchi word, since $\wt{31} = (1,0,1)$, but $(1121$) is a
Yamanouchi word, since $\wt{1121} = (3,1,0)$; $\wt{121} = (2,1,0)$; $\wt{21} = (1,1,0)$, and $\wt{1} = (1,0,0)$. A word
is highest-weight if and only if it is a Yamanouchi word. See \cite[Ch.~5]{lothaire_algebraic} for further background.

Highest-weight tableau words also have a neat characterization: a tableau word is highest-weight if and only if its
weight is equal to the shape of the corresponding tableau. That is, a tableau word whose corresponding tableau has shape
$\lambda$ is highest-weight if and only if, for each $i \in \aA_n$, the number of symbols $i$ it contains is
$\lambda_i$. It follows that a tableau whose reading is a highest-weight word must contain only symbols $i$ on its
$i$-th row, for all $i \in \set{1,\ldots,\plen\lambda}$. For example, the tableau of shape $(5,3,2,2)$ whose reading is
a highest-weight word is:
\[
\tableau{
1 \& 1 \& 1 \& 1 \& 1 \\
2 \& 2 \& 2 \\
3 \& 3 \\
4 \& 4 \\
}
\]
See \cite{kashiwara_classical} for further background.

\section{Quasi-ribbon tableaux and insertion}
\label{sec:quasiribbontableau}

This section gathers the relevant definitions and background on quasi-ribbon tableaux, the analogue of the
Robinson--Schensted--Knuth correspondence, and the hypoplactic monoid. For further background, see
\cite{krob_noncommutative4,novelli_hypoplactic}.

Let $\alpha = (\alpha_1,\ldots,\alpha_m)$ and $\beta = (\beta_1,\ldots,\beta_p)$ be compositions with
$\cwt\alpha = \cwt\beta$. Then $\beta$ is \defterm{coarser} than $\alpha$, denoted $\beta \preceq \alpha$, if each partial
sum $\beta_1+\ldots+\beta_{p'}$ (for $p' < p$) is equal to \emph{some} partial sum $\alpha_1+\ldots+\alpha_{m'}$ for
some $m' < m$. (Essentially, $\beta$ is coarser than $\alpha$ if it can be formed from $\alpha$ by `merging' consecutive
parts.) Thus $(11) \preceq (3,8) \preceq (3,6,2) \preceq (3,1,5,2)$.

A \defterm{ribbon diagram} of shape $\alpha$, where $\alpha$ is a composition, is an array of boxes, with $\alpha_h$
boxes in the $h$-th row, for $h = 1,\ldots,\clen\alpha$ and counting rows from top to bottom, aligned so that the leftmost cell in each row is below the
rightmost cell of the previous row. For example, the ribbon diagram
of shape $(3,1,5,2)$ is:
\begin{equation}
\label{eq:ribboneg}
\begin{tikzpicture}[baseline=(baselinemarker)]
  \matrix[tableaumatrix,name=mainmatrix]
  {
    \null \& \null \& \null                                        \\
    \&       \& \null                                              \\
    \&       \& \null \& \null \& \null \& \null \& \null          \\
    \&       \&       \&       \&       \&       \& \null \& \null \\
  };
  \node[font=\scriptsize,anchor=east] at (mainmatrix-1-1.west) {row $1$};
  \node[font=\scriptsize,anchor=east] at ($ (mainmatrix-2-3.west) + 2*(-5mm,0) $) {row $2$};
  \node[font=\scriptsize,anchor=east] at ($ (mainmatrix-3-3.west) + 2*(-5mm,0) $) {row $3$};
  \node[font=\scriptsize,anchor=east] at ($ (mainmatrix-4-7.west) + 6*(-5mm,0) $) {row $4$};
  \coordinate (baselinemarker) at ($ (mainmatrix-2-3.base) + (0,-2.5mm) $);
\end{tikzpicture}
\end{equation}
Notice that a ribbon diagram cannot contain a $2\times 2$ subarray (that is, of the
form $\tikz[shapetableau,baseline=0mm]\matrix{ \null \& \null \\ \null \& \null \\};$).

In a ribbon diagram of shape $\alpha$, the number of rows is $\clen\alpha$ and the number of boxes is $\cwt\alpha$.

A \defterm{quasi-ribbon tableau} of shape $\alpha$, where $\alpha$ is a composition, is a ribbon diagram of shape $\alpha$ filled with symbols from $\aA$ such that the entries in every row
are non-decreasing from left to right and the entries in every column are strictly increasing from top to bottom. An
example of a quasi-ribbon tableau is
\begin{equation}
\label{eq:qrteg}
\tikz[tableau]\matrix
{
 1 \& 2 \& 2                          \\
   \&   \& 3                          \\
   \&   \& 4 \& 4 \& 5 \& 5 \& 5      \\
   \&   \&   \&   \&   \&   \& 6 \& 7 \\
};
\end{equation}
Note the following immediate consequences of the definition of a quasi-ribbon tableau: (1) for each $a \in \aA$, the
symbols $a$ in a quasi-ribbon tableau all appear in the same row, which must be the $j$-th row for some $j \leq a$; (2)
the $h$-th row of a quasi-ribbon tableau cannot contain symbols from $\set{1,\ldots,h-1}$.

A \defterm{quasi-ribbon tabloid} is a ribbon diagram filled with symbols from $\aA$ such that the entries in every
column are strictly increasing from top to bottom. (Notice that there is no restriction on the order of entries in a
row.) An example of a quasi-ribbon tabloid is
\begin{equation}
\label{eq:qrtoideg}
\tikz[tableau]\matrix
{
 1 \& 5 \& 2                          \\
   \&   \& 3                          \\
   \&   \& 6 \& 2 \& 4 \& 5 \& 4      \\
   \&   \&   \&   \&   \&   \& 5 \& 7 \\
};
\end{equation}
Note that a quasi-ribbon tableau is a special kind of quasi-ribbon tabloid.

A \defterm{recording ribbon} of shape $\alpha$, where $\alpha$ is a composition, is a ribbon diagram of shape $\alpha$
filled with symbols from $\set{1,\ldots,\cwt\alpha}$, with each symbol appearing exactly once, such that the entries in
every row are increasing from left to right, and entries in every column are increasing from bottom to top. (Note that
the condition on the order of entries in rows is the same as in quasi-ribbon tableau, but the condition on the order of
entries in columns is the opposite of that in quasi-ribbon tableau.)  An example of a recording ribbon of shape
$(3,1,5,2)$ is
\begin{equation}
\label{eq:rreg}
\tikz[tableau]\matrix
{
 1 \& 2 \& 9                           \\
   \&   \& 8                           \\
   \&   \& 3 \& 4 \& 6 \& 7 \& 11      \\
   \&   \&   \&   \&   \&   \& 5 \& 10 \\
};
\end{equation}

The \defterm{column reading} $\colreading{T}$ of a quasi-ribbon tabloid $T$ is the word in $\aA^*$ obtained by proceeding through
the columns, from leftmost to rightmost, and reading each column from bottom to top. For example, the column reading of
the quasi-ribbon tableau \eqref{eq:qrteg} and the quasi-ribbon tabloid \eqref{eq:qrtoideg} are respectively
$1\,2\,432\,4\,5\,5\,65\,7$ and $1\,5\,632\,2\,4\,5\,54\,7$ (where the spaces are simply for clarity, to show readings
of individual columns), as illustrated below:
\[
\begin{tikzpicture}
  \matrix[tableaumatrix] at (0,0) (qrteg)
  {
    1 \& 2 \& 2                          \\
      \&   \& 3                          \\
      \&   \& 4 \& 4 \& 5 \& 5 \& 5      \\
      \&   \&   \&   \&   \&   \& 6 \& 7 \\
  };
  \draw[gray,thick,rounded corners=3mm,->]
  ($ (qrteg-1-1) + (-1.5mm,-20mm) $) -- ($ (qrteg-1-1) + (-1.5mm,7mm) $) --
  ($ (qrteg-1-2) + (-1.5mm,-7mm) $) -- ($ (qrteg-1-2) + (-1.5mm,7mm) $) --
  ($ (qrteg-3-3) + (-1.5mm,-7mm) $) -- ($ (qrteg-1-3) + (-1.5mm,7mm) $) --
  ($ (qrteg-3-4) + (-1.5mm,-7mm) $) -- ($ (qrteg-3-4) + (-1.5mm,7mm) $) --
  ($ (qrteg-3-5) + (-1.5mm,-7mm) $) -- ($ (qrteg-3-5) + (-1.5mm,7mm) $) --
  ($ (qrteg-3-6) + (-1.5mm,-7mm) $) -- ($ (qrteg-3-6) + (-1.5mm,7mm) $) --
  ($ (qrteg-4-7) + (-1.5mm,-7mm) $) -- ($ (qrteg-3-7) + (-1.5mm,7mm) $) --
  ($ (qrteg-4-8) + (-1.5mm,-7mm) $) -- ($ (qrteg-4-8) + (-1.5mm,7mm) $) --
  ($ (qrteg-4-8) + (-1.5mm,20mm) $);
\end{tikzpicture}
\qquad
\begin{tikzpicture}
  \matrix[tableaumatrix] at (0,0) (qrteg)
  {
    1 \& 5 \& 2 \\
    \&   \& 3 \\
    \&   \& 6 \& 2 \& 4 \& 5 \& 4 \\
    \&   \&   \&   \&   \&   \& 5 \& 7 \\
  };
  \draw[gray,thick,rounded corners=3mm,->]
  ($ (qrteg-1-1) + (-1.5mm,-20mm) $) -- ($ (qrteg-1-1) + (-1.5mm,7mm) $) --
  ($ (qrteg-1-2) + (-1.5mm,-7mm) $) -- ($ (qrteg-1-2) + (-1.5mm,7mm) $) --
  ($ (qrteg-3-3) + (-1.5mm,-7mm) $) -- ($ (qrteg-1-3) + (-1.5mm,7mm) $) --
  ($ (qrteg-3-4) + (-1.5mm,-7mm) $) -- ($ (qrteg-3-4) + (-1.5mm,7mm) $) --
  ($ (qrteg-3-5) + (-1.5mm,-7mm) $) -- ($ (qrteg-3-5) + (-1.5mm,7mm) $) --
  ($ (qrteg-3-6) + (-1.5mm,-7mm) $) -- ($ (qrteg-3-6) + (-1.5mm,7mm) $) --
  ($ (qrteg-4-7) + (-1.5mm,-7mm) $) -- ($ (qrteg-3-7) + (-1.5mm,7mm) $) --
  ($ (qrteg-4-8) + (-1.5mm,-7mm) $) -- ($ (qrteg-4-8) + (-1.5mm,7mm) $) --
  ($ (qrteg-4-8) + (-1.5mm,20mm) $);
\end{tikzpicture}
\]

Let $w \in \aA^*$, and let $w^{(1)}\cdots w^{(m)}$ be the factorization of $w$ into maximal decreasing factors. Let
$\qrtabloid{w}$ be the quasi-ribbon tabloid whose $h$-th column has height $|w^{(h)}|$ and is filled with the symbols of $w^{(h)}$, for
$h = 1,\ldots,m$. (So each maximal decreasing factor of $w$ corresponds to a column of $\qrtabloid{w}$.) Then $\colreading{\qrtabloid{w}} = w$.

If $w$ is the column reading of some quasi-ribbon tableau $T$, it is called a \defterm{quasi-ribbon word}. It is easy to
see that the word $w$ is a quasi-ribbon word if and only if $\qrtabloid{w}$ is a quasi-ribbon tableau. For example $433$
is not a quasi-ribbon word, since $\qrtabloid{433} = \tableau{3 \\4 \& 3\\}$ is the only quasi-ribbon tabloid whose
column reading is $433$.

\begin{proposition}[{\cite[Proposition~3.4]{novelli_hypoplactic}}]
\label{prop:uqrwiffstduqrw}
A word $u \in \aA^*$ is a quasi-ribbon word if and only if $\std{u}$ is a quasi-ribbon word.
\end{proposition}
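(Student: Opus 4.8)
The plan is to reduce the claim to a comparison of the quasi-ribbon tabloids $\qrtabloid{u}$ and $\qrtabloid{\std{u}}$. Recall from earlier in this section that a word $w$ is a quasi-ribbon word if and only if $\qrtabloid{w}$ is a quasi-ribbon tableau, and that $\qrtabloid{w}$ is in any case a quasi-ribbon tabloid: its columns are filled by the maximal decreasing factors of $w$, which are strictly decreasing, so each column is strictly increasing from top to bottom. Hence the column condition is automatic for both $\qrtabloid{u}$ and $\qrtabloid{\std{u}}$, and the whole statement comes down to the row condition.

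First I would record that standardization preserves the pattern of ascents and descents: for consecutive letters, $u_i > u_{i+1}$ if and only if $\std{u}_i > \std{u}_{i+1}$. Indeed, if $u_i > u_{i+1}$ the two letters are distinct and standardization is order-preserving; if $u_i = u_{i+1}$ the later occurrence is assigned the larger subscript, so $\std{u}_i < \std{u}_{i+1}$; and if $u_i < u_{i+1}$ order is again preserved. Consequently $u$ and $\std{u}$ factor into maximal decreasing factors of the same lengths in the same order, so $\qrtabloid{u}$ and $\qrtabloid{\std{u}}$ have the same ribbon shape; and since $\colreading{\qrtabloid{w}} = w$ for every $w$, the $k$-th cell in column-reading order carries the $k$-th letter of the word. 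Thus $\qrtabloid{\std{u}}$ is nothing but $\qrtabloid{u}$ with each entry relabelled by standardizing the entries in column-reading order.

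The key local observation is then elementary: if two cells of $\qrtabloid{u}$ lie in a common row, with one to the left of the other, then the left one precedes the right one in the column reading of $\qrtabloid{u}$ — because cells of a single row occupy distinct columns and the column reading processes columns from left to right. Writing $a,b$ for the entries of such a pair (left cell first) and $\bar a, \bar b$ for the corresponding entries of $\qrtabloid{\std{u}}$, standardization gives $a < b \Rightarrow \bar a < \bar b$, $a > b \Rightarrow \bar a > \bar b$, and $a = b \Rightarrow \bar a < \bar b$ (the left cell, being read first, receives the smaller subscript). Hence $a \le b$ if and only if $\bar a < \bar b$ for every left-to-right pair in a row, so a row of $\qrtabloid{u}$ is non-decreasing if and only if the corresponding row of $\qrtabloid{\std{u}}$ is (strictly) increasing. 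Running over all rows and recalling that the column conditions are automatic, $\qrtabloid{u}$ is a quasi-ribbon tableau if and only if $\qrtabloid{\std{u}}$ is, which is the assertion.

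I do not expect a genuine obstacle here; the only point requiring care is the identification of $\qrtabloid{\std{u}}$ with the entrywise standardization of $\qrtabloid{u}$, which relies on standardization preserving the maximal-decreasing-factor structure together with $\colreading{\qrtabloid{w}} = w$. Once this is set up, the fact that cells in a common row occupy distinct columns — and are therefore read in left-to-right order — makes the row equivalence immediate, with no need to inspect the corner structure of the ribbon.
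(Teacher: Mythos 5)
Your argument is correct, but note that the paper offers no proof of this statement to compare against: it is imported wholesale by citation from Novelli (\emph{op.\ cit.}, Proposition~3.4), so your write-up is effectively supplying a proof the paper omits. The two pillars of your reduction both hold: standardization preserves the descent set exactly (strict inequalities are preserved, and equal letters are broken by left-to-right occurrence order, so $u_i>u_{i+1}$ iff $\std{u}_i>\std{u}_{i+1}$), whence $\qrtabloid{u}$ and $\qrtabloid{\std{u}}$ have the same shape with cell-by-cell correspondence via $\colreading{\qrtabloid{w}}=w$; and for two cells in a common row the left one occupies an earlier column and is therefore read first, so equal entries standardize in increasing order and the row conditions transfer in both directions. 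The column condition is indeed automatic for any $\qrtabloid{w}$, and the quoted equivalence ``$w$ is a quasi-ribbon word iff $\qrtabloid{w}$ is a quasi-ribbon tableau'' is stated explicitly in the paper, so relying on it is legitimate. I see no gap.
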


The following algorithm gives a method for inserting a symbol into a quasi-ribbon tableau. It is due to Krob \& Thibon,
but is stated here in a slightly modified form:

\begin{algorithm}[{\cite[\S~7.2]{krob_noncommutative4}}]
\label{alg:hypoplacticinsertionone}
~\par\nobreak
\textit{Input:} A quasi-ribbon tableau $T$ and a symbol $a \in \aA$.

\textit{Output:} A quasi-ribbon tableau $T\leftarrow a$.

\textit{Method:} If there is no entry in $T$ that is less than or equal to $a$, output the quasi-ribbon tableau obtained
by creating a new entry $a$ and attaching (by its top-left-most entry) the quasi-ribbon tableau $T$ to the bottom of $a$.

If there is no entry in $T$ that is greater than $a$, output the word obtained by creating a new entry $a$ and attaching
(by its bottom-right-most entry) the quasi-ribbon tableau $T$ to the left of $a$.

Otherwise, let $x$ and $z$ be the adjacent entries of the quasi-ribbon tableau $T$ such that $x \leq a < z$.
(Equivalently, let $x$ be the right-most and bottom-most entry of $T$ that is less than or equal to $a$, and let $z$ be
the left-most and top-most entry that is greater than $a$. Note that $x$ and $z$ could be either horizontally or
vertically adjacent.) Take the part of $T$ from the top left down to and including $x$, put a new entry $a$ to
the right of $x$ and attach the remaining part of $T$ (from $z$ onwards to the bottom right) to the bottom of the new
entry $a$, as illustrated here:
\begin{align*}
\tikz[tableau]\matrix
{
 \null \& x \&   \& \\
 \& z \& \null \& \null \\
 \&   \&   \& \null \\
}; \leftarrow a &=
\tikz[tableau]\matrix
{
 \null \& x \& a  \& \\
 \& \& z \& \null \& \null \\
 \& \& \&   \& \null \\
}; &&
\begin{array}{l}
\text{[where $x$ and $z$ are} \\
\text{\phantom{[}vertically adjacent]}
\end{array}
\displaybreak[0]\\
\tikz[tableau]\matrix
{
 \null \& \null \&   \& \\
 \& x \& z \& \null \\
 \&   \&   \& \null \\
}; \leftarrow a &=
\tikz[tableau]\matrix
{
 \null \& \null \\
 \& x \& a \\
 \&  \& z \& \null \\
 \& \& \& \null \\
}; &&
\begin{array}{l}
\text{[where $x$ and $z$ are} \\
\text{\phantom{[}horizontally adjacent]}
\end{array}
\end{align*}
Output the resulting quasi-ribbon tableau.
\end{algorithm}

Using an iterative form of this algorithm, one can start from a word $a_1\cdots a_k$ (where $a_i \in \aA$) and compute a
quasi-ribbon tableau $\qrtableau{a_1\cdots a_k}$. Essentially, one simply starts with the empty quasi-ribbon tableau and
inserts the symbols $a_1$, $a_2$, \ldots, $a_k$ in order. However, the algorithm described below also computes a
recording ribbon $\recribbon{a_1\cdots a_k}$, which will be used later in discussing an analogue of the
Robinson--Schensted--Knuth correspondence.

\begin{algorithm}[{\cite[\S~7.2]{krob_noncommutative4}}]
\label{alg:hypoplacticinsert}
~\par\nobreak
\textit{Input:} A word $a_1\cdots a_k$, where $a_i \in \aA$.

\textit{Output:} A quasi-ribbon tableau $\qrtableau{a_1\cdots a_k}$ and a recording ribbon $\recribbon{a_1\cdots a_k}$.

\textit{Method:} Start with the empty quasi-ribbon tableau $Q_0$ and an empty recording ribbon $R_0$. For each $i = 1$, \ldots,
$k$, insert the symbol $a_i$ into $Q_{i-1}$ as per \fullref{Algorithm}{alg:hypoplacticinsert}; let $Q_i$ be the
resulting quasi-ribbon tableau. Build the recording ribbon $R_i$, which has the same shape as $Q_i$, by adding an entry
$i$ into $R_{i-1}$ at the same place as $a_i$ was inserted into $Q_{i-1}$.

Output $Q_k$ for $\qrtableau{a_1\cdots a_k}$ and $R_k$ as $\recribbon{a_1\cdots a_k}$.
\end{algorithm}

For example, the sequence of pairs $(Q_i,R_i)$ produced during the application of
\fullref{Algorithm}{alg:hypoplacticinsert} to the word $4323$ is
\[
(\;,\;),\;
\bigl(\tableau{4\\},\tableau{1\\}\bigr),\;
\left(\tableau{3\\4\\},\tableau{2\\1\\}\right),\;
\left(\tableau{2\\3\\4\\},\tableau{3\\2\\1\\}\right),\;
\left(\tableau{2\\3\&3\\\&4\\},\tableau{3\\2\&4\\\&1\\}\right).
\]
Therefore $\qrtableau{4323} = \tableau{2\\3\&3\\\&4\\}$ and $\recribbon{4323} = \tableau{3\\2\&4\\\&1\\}$. It is
straightforward to see that the map $u \mapsto (\qrtableau{u},\recribbon{u)}$ is a bijection between words in $\aA^*$ and pairs
consisting of a quasi-ribbon tableau and a recording ribbon \cite[\S~7.2]{krob_noncommutative4} of the same shape; this
is an analogue of the Robinson--Schensted--Knuth correspondence. For instace, if $\qrtableau{u}$ is \eqref{eq:qrteg} and $\recribbon{u}$ is
\eqref{eq:rreg}, then $u = 12446553275$.

Recall the definition of $\descomp{\textvisiblespace}$ from \fullref{Subsection}{subsec:comppart}.

\begin{proposition}[{\cite[Theorems~4.12 \&~4.16]{novelli_hypoplactic}}]
\label{prop:shapeqrtuisdescomp}
For any word $u \in \aA^*$, the shape of $\qrtableau{u}$ (and of $\recribbon{u}$) is $\descomp{\std{u}^{-1}}$.
\end{proposition}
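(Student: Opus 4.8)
The plan is to reduce to the case of standard words and then to read the shape of $\qrtableau{u}$ directly off \fullref{Algorithm}{alg:hypoplacticinsertionone}. Since $\descomp{\std{u}^{-1}}$ depends only on $\std{u}$, and since $\recribbon{u}$ has by construction (see \fullref{Algorithm}{alg:hypoplacticinsert}) exactly the same shape as $\qrtableau{u}$, it suffices to show that $\qrtableau{u}$ and $\qrtableau{\std{u}}$ have the same shape and then to prove the statement when $u$ is standard. The equality of shapes is an instance of the compatibility of standardization with Krob--Thibon insertion: a routine induction on $\abs{u}$, comparing after each step the partial tableau built from $u$ with the one built from $\std{u}$ and using that among equal entries the algorithm always inserts a new symbol to the right of those already present, shows that $\qrtableau{\std{u}}$ is obtained from $\qrtableau{u}$ by relabelling its entries monotonically along the ribbon; in particular the shapes coincide. (This is implicit in \cite{novelli_hypoplactic}.)

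So let $u = u_1 \cdots u_k$ be standard, i.e.\ a permutation of $\set{1,\ldots,k}$. The key claim is that reading the cells of $\qrtableau{u}$ along the ribbon from top-left to bottom-right yields $1, 2, \ldots, k$ in order, and that for each $h \in \set{1,\ldots,k-1}$ the integers $h$ and $h+1$ lie in the same row of $\qrtableau{u}$ if and only if $h$ occurs before $h+1$ in $u$. The first half is immediate: each row of a quasi-ribbon tableau is strictly increasing (the entries are distinct) and the last cell of a row sits directly above the first cell of the next, so the strict increase down columns forces the entry to increase across each row break; hence the entries increase along the whole ribbon and, being $1, \ldots, k$, they occur in that order. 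For the second half I would induct on $k$ by following the insertion of $u_1, \ldots, u_k$: after inserting $u_1, \ldots, u_i$ the tableau is, by the first half applied to this sub-tableau, a ribbon whose cells list $S_i := \set{u_1,\ldots,u_i}$ in increasing order, so its shape records which consecutive pairs of $S_i$ are separated by a row break. Inserting $u_{i+1}$ (a value not in $S_i$) and running through the cases of \fullref{Algorithm}{alg:hypoplacticinsertionone} --- $u_{i+1}$ below all of $S_i$ (it forms a new top row), above all of $S_i$ (it is appended to the bottom row), or wedged between consecutive $x < u_{i+1} < z$ of $S_i$ that are vertically adjacent (the existing row break moves to below $u_{i+1}$) or horizontally adjacent (a fresh row break is created below $u_{i+1}$) --- one verifies in every case that there is no row break between $u_{i+1}$ and its immediate predecessor in $S_{i+1}$, there is a row break between $u_{i+1}$ and its immediate successor in $S_{i+1}$, and no other row break changes. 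Both of these neighbours lie in $S_i$ and so occur earlier than $u_{i+1}$ in $u$, so this is exactly the asserted rule at the instant each new adjacency is formed; and since consecutive integers can never be torn apart by a later insertion, the status fixed at that instant persists to the end.

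Granting the claim, $\qrtableau{u}$ has a shape $\alpha$ whose successive rows are the maximal runs of consecutive integers among $1, \ldots, k$ that are not broken apart; hence $\cwt{\alpha} = k = \cwt{u^{-1}}$, and, using that $(u^{-1})_h$ is the position of $h$ in $u$, the set $D(\alpha)$ of positions of row breaks is $\gset{h}{(u^{-1})_h > (u^{-1})_{h+1}} = D(u^{-1})$. By the uniqueness clause in the definition of $\descomp{{\cdot}}$ this gives $\alpha = \descomp{u^{-1}} = \descomp{\std{u}^{-1}}$, as required; the claim for $\recribbon{u}$ then follows because it has the same shape as $\qrtableau{u}$.

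The main obstacle will be the inductive step for the key claim: one has to check, against the precise statement of \fullref{Algorithm}{alg:hypoplacticinsertionone}, both that the ``cells list $S_i$ in increasing order'' invariant really is preserved and that the vertically-adjacent and horizontally-adjacent subcases (and the two subcases where $u_{i+1}$ becomes a new extreme value) each produce the claimed local change in the row-break pattern; the remaining descent-set bookkeeping is routine. One could also bypass the standardization reduction: the same induction runs for arbitrary $u$ once ``the integer $h$'' is read as ``the block of equal entries of value $v$'', with one extra subcase for inserting a repeated value, and the closing computation then reduces to the elementary observation that $h \in D(\std{u}^{-1})$ exactly when $h$ labels the last occurrence in $u$ of some letter-value $v$ and that occurrence comes after the first occurrence in $u$ of the next-larger letter-value present.
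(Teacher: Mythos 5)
The paper does not prove this proposition: it is quoted verbatim from Novelli (Theorems~4.12 and~4.16 of \cite{novelli_hypoplactic}), so there is no in-paper argument to compare yours against. Judged on its own terms, your proof is correct and essentially self-contained. The two points that carry all the weight both check out. First, the reduction to standard words: the shape-equality of $\qrtableau{u}$ and $\qrtableau{\std{u}}$ does follow by the parallel-insertion induction you sketch, because in a quasi-ribbon tableau all copies of a letter sit consecutively in one row with later occurrences inserted to the right of earlier ones, so the adjacent pair $x \leq a < z$ located by \fullref{Algorithm}{alg:hypoplacticinsertionone} for $u$ corresponds under standardization to the pair located for $\std{u}$, and the same case of the algorithm fires; and since $\std{\std{u}} = \std{u}$, proving the formula for $\std{u}$ does give it for $u$. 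Second, the local analysis of \fullref{Algorithm}{alg:hypoplacticinsertionone}: in each of the four cases (new minimum, new maximum, $x,z$ vertically adjacent, $x,z$ horizontally adjacent) the new cell lands in the same row as its predecessor along the ribbon, acquires a row break before its successor, and every other consecutive pair of the previously inserted set keeps its same-row status (the horizontal case splits one row, but only at the new cell). Combined with the observation that entries of a quasi-ribbon tableau with distinct entries strictly increase along the ribbon, this gives $D(\alpha) = \gset{h}{(u^{-1})_h > (u^{-1})_{h+1}} = D(u^{-1})$ exactly as you compute, and the statement for $\recribbon{u}$ is then immediate from \fullref{Algorithm}{alg:hypoplacticinsert}, which builds $R_i$ with the same shape as $Q_i$ at every step. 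The only caveat is presentational: the ``routine'' standardization lemma and the four-case check are precisely where a written-out proof would spend its effort, so they should be displayed rather than asserted; but there is no gap.
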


The possible definition (approach~H2 in the introduction) of the relation $\hypocong$ using tableaux and insertion is
the following:
\[
u \hypocong v \iff \qrtableau{u} = \qrtableau{v}.
\]
Using this as a definition, it follows that $\hypocong$ is a congruence on $\aA^*$ \cite{novelli_hypoplactic}, which is
known as the \defterm{hypoplactic congruence} on $\aA^*$. The factor monoid $\aA^*/{\hypocong}$ is the
\defterm{hypoplactic monoid} and is denoted $\hypo$. The congruence $\hypocong$ naturally restricts to a congruence on
$\aA_n^*$, and the factor monoid $\aA_n^*/{\hypocong}$ is the \defterm{hypoplactic monoid of rank $n$} and is denoted
$\hypo_n$.

As noted above, if $w$ is a quasi-ribbon word, then $w = \colreading{\qrtableau{w}}$. Thus the
quasi-ribbon words in $\aA^*$ form a cross-section (or set of normal forms) for $\hypo$, and the quasi-ribbon words in
$\aA_n^*$ form a cross-section for $\hypo_n$.

This paper also uses the following equivalent characterization of $\hypocong$ \cite[Theorem~4.18]{novelli_hypoplactic}:
\begin{equation}
\label{eq:charhypocong}
\begin{aligned}
&u \hypocong v \iff\\
&\qquad\qquad \descomp{\std{u}^{-1}} = \descomp{\std{v}^{-1}} \land \wt{u} = \wt{v}.
\end{aligned}
\end{equation}

\section{Quasi-Kashiwara operators and the quasi-crystal graph}
\label{sec:quasikashiwara}

This section defines the quasi-Kashiwara operators and the quasi-crystal graph, and shows that isomorphisms between
components of this graph give rise to a monoid. The following section will prove that this monoid is in fact the
hypoplactic monoid.

Let $i \in \set{1,\ldots,n-1}$. Let $u \in \aA_n^*$. The word $u$ has an \defterm{$i$-inversion} if it contains a symbol
$i+1$ to the left of a symbol $i$. Equivalently, $u$ has an $i$-inversion if it contains a subsequence $(i+1)i$. (Recall
from \fullref{Subsection}{subsec:words} that a subsequence may be made up of non-consecutive letters.) If the word $u$
does not have an $i$-inversion, it is said to be \defterm{$i$-inversion-free}.

Define the \defterm{quasi-Kashiwara operators} $\e_i$ and $\f_i$ on $\aA_n^*$ as follows: Let $u \in \aA_n^*$.
\begin{itemize}
\item If $u$ has an $i$-inverstion, both $\e_i(u)$ and $\f_i(u)$ are undefined.
\item If $u$ is $i$-inversion-free, but $u$ contains at least one symbol $i+1$, then $\e_i(u)$ is the word obtained from $u$
  by replacing the left-most symbol $i+1$ by $i$; if $u$ contains no symbol $i+1$, then $\e_i(u)$ is undefined.
\item If $u$ is $i$-inversion-free, but $u$ contains at least one symbol $i$, then $\f_i(u)$ is the word obtained from $u$
  by replacing the right-most symbol $i$ by $i+1$; if $u$ contains no symbol $i$, then $\f_i(u)$ is undefined.
\end{itemize}
For example,
\begin{align*}
\e_2(3123) &\;\text{is undefined since $3123$ has a $2$-inversion;} \\
\f_2(3131) &\;\text{is undefined since $3131$ is $2$-inversion free} \\
&\;\qquad\text{but does not contains a symbol $2$;} \\
\f_1(3113) &= 3123.
\end{align*}
Define
\begin{align*}
  \ecount_i(u) & = \max\gset[\big]{k \in \nset\cup\set{0}}{\text{$\underbrace{\e_i\cdots\e_i}_{\text{$k$ times}}(u)$ is defined}}; \\
  \fcount_i(u) & = \max\gset[\big]{k \in \nset\cup\set{0}}{\text{$\underbrace{\f_i\cdots\f_i}_{\text{$k$ times}}(u)$ is defined}}.
\end{align*}
An immediate consequence of the definitions of $\e_i$ and $\f_i$ is that:
\begin{itemize}
\item If $u$ has an $i$-inverstion, then $\ecount_i(u) = \fcount_i(u) = 0$;
\item If $u$ is $i$-inversion-free, then every symbol $i$ lies to the left of every symbol $i+1$ in $u$, and so
  $\ecount_i(u) = \abs{u}_{i+1}$ and $\fcount_i(u) = \abs{u}_i$.
\end{itemize}

\begin{remark}
  It is worth noting how the quasi-Kashiwara operators $\e_i$ and $\f_i$ relate to the standard Kashiwara operators
  $\ke_i$ and $\kf_i$ as defined in \fullref{Subsection}{subsec:kashiwara}. As discussed in
  \fullref{Subsection}{subsec:computingkashiwara}, one computes the action of $\ke_i$ and $\kf_i$ on a word
  $u \in \aA_n^*$ by replacing each symbol $i$ with $+$, each symbol $i+1$ with $-$, every other symbol with the empty
  word, and then iteratively deleting factors ${-}{+}$ until a word of the form ${+}^p{-}^q$ remains, whose left-most
  symbol ${-}$ and right-most symbol ${+}$ (if they exist) indicate the symbols in $u$ changed by $\ke_i$ and $\kf_i$
  respectively. The deletion of factors ${-}{+}$ corresponds to rewriting to normal form a word representing an element
  of the bicyclic monoid $\pres{{+},{-}}{({-}{+},\emptyword)}$. To compute the action of the quasi-Kashiwara operators
  $\e_i$ and $\f_i$ defined above on a word $u \in \aA_n^*$, replace the symbols in the same way as before, but now
  rewrite to normal form as a word representing an element of the monoid $\pres{{+},{-}}{({-}{+},0)}$, where $0$ is a
  multiplicative zero. Any word that contains a symbol ${-}$ to the left of a symbol ${+}$ will be rewritten to $0$ and
  so $\e_i$ and $\f_i$ will be undefined in this case. If the word does not contain a symbol ${-}$ to the left of a
  symbol ${+}$, then it is of the form ${+}^p{-}^q$, and the left-most symbol $-$ and the right-most symbol $+$
  indictate the symbols in $u$ changed by $\e_i$ and $\f_i$. In essence, one obtains the required analogies of the
  Kashiwara operators by replacing the bicyclic monoid $\pres{{+},{-}}{({-}{+},\emptyword)}$, where ${-}{+}$ rewrites to
  the identity, with the monoid $\pres{{+},{-}}{({-}{+},0)}$, where ${-}{+}$ rewrites to the zero.
\end{remark}

The action of the quasi-Kashiwara operators is essentially a restriction of the action of the Kashiwara operators:

\begin{proposition}
  \label{prop:keddefinedimpliesefdefined}
  Let $u \in \aA_n^*$. If $\e_i(u)$ is defined, so is $\ke_i(u)$, and $\ke_i(u) = \e_i(u)$. If $\f_i(u)$ is defined, so
  is $\kf_i(u)$, and $\kf_i(u) = \f_i(u)$.
\end{proposition}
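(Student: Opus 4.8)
The plan is to avoid unwinding the recursive definition of $\ke_i$ and $\kf_i$ and instead invoke the signature-rule computation recalled in \fullref{Subsection}{subsec:computingkashiwara}, which has already been justified there. Fix $i \in \set{1,\ldots,n-1}$ and $u \in \aA_n^*$, and form the word over $\set{{+},{-}}$ obtained from $u$ by replacing each letter $i$ with ${+}$, each letter $i+1$ with ${-}$, and every other letter with the empty word, remembering which letter of $u$ produced each symbol.

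First I would handle $\e_i$. Suppose $\e_i(u)$ is defined: by definition $u$ is $i$-inversion-free and contains at least one letter $i+1$. Because $u$ is $i$-inversion-free, no letter $i+1$ stands to the left of a letter $i$, so in the $\set{{+},{-}}$-word every ${+}$ precedes every ${-}$; thus this word is already of the form ${+}^p{-}^q$ with $p = \abs{u}_i$ and $q = \abs{u}_{i+1}$, and no factor ${-}{+}$ is ever deleted. Hence $\rho_i(u) = {+}^p{-}^q$, so $\kecount_i(u) = q = \abs{u}_{i+1} > 0$ and $\ke_i(u)$ is defined. Moreover, since the symbols ${-}$ occur in $\rho_i(u)$ in the same left-to-right order as the letters $i+1$ they replace and no deletions have occurred, the leftmost ${-}$ of $\rho_i(u)$ corresponds to the leftmost letter $i+1$ of $u$; therefore $\ke_i(u)$ is obtained from $u$ by changing that leftmost $i+1$ to $i$, which is precisely $\e_i(u)$.

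The argument for $\f_i$ is entirely symmetric. If $\f_i(u)$ is defined, then $u$ is $i$-inversion-free and contains a letter $i$; the same analysis gives $\rho_i(u) = {+}^p{-}^q$ with $p = \abs{u}_i > 0$, so $\kfcount_i(u) = p > 0$ and $\kf_i(u)$ is defined, and the rightmost ${+}$ of $\rho_i(u)$ corresponds to the rightmost letter $i$ of $u$, so $\kf_i(u)$ changes that letter to $i+1$, yielding $\f_i(u)$.

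I expect no real obstacle here: the one point needing a moment's care is the observation that the signature word of an $i$-inversion-free word is already reduced (contains no factor ${-}{+}$), which is immediate from the definition of an $i$-inversion. One could instead prove the proposition directly from the recursive definitions by induction on $\abs{u}$, checking that $\kecount_i$ and $\ecount_i$ (and $\kfcount_i$ and $\fcount_i$) agree on $i$-inversion-free words and that the recursion picks out the same tensor factor in which to act; but passing through the already-established computational method keeps the proof short.
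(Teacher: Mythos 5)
Your proposal is correct and follows essentially the same route as the paper: the paper's proof also passes through the signature-rule computation, observing that for an $i$-inversion-free word the $\set{{+},{-}}$-word is already of the form ${+}^{\kfcount_i(u)}{-}^{\kecount_i(u)}$ with no deletions, so the leftmost ${-}$ (resp.\ rightmost ${+}$) corresponds to the leftmost $i+1$ (resp.\ rightmost $i$). No issues to report.
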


\begin{proof}
  Let $u \in \aA_n^*$. Suppose the quasi-Kashiwara operator $\e_i$ is defined on $u$. Then $u$ contains at least one
  symbol $i+1$ but is $i$-inversion-free, so that every symbol $i$ lies to the left of every symbol
  $i+1$ in $u$. So when one computes the action of $\ke_i$, replacing every symbol $i$ with the symbol ${+}$ and every
  symbol $i+1$ with the symbol ${-}$ leads immediately to the word ${+}^{\kfcount_i(u)}{-}^{\kecount_i(u)}$ (that is,
  there are no factors ${-}{+}$ to delete). Hence $\kecount_i(u) > 0$, and so the Kashiwara operator $\ke_i$ is defined
  on $u$. Furthermore, $\ke_i$ acts by changing the symbol $i+1$ that contributed the leftmost ${-}$ to $i$, and, since
  there was no deletion of factors ${-}{+}$, this symbol must be the leftmost symbol $i+1$ in $u$. Thus
  $\ke_i(u) = \e_i(u)$.

  Similarly, if the quasi-Kashiwara operator $\f_i$ is defined on $u$, so is the Kashiwara operator $\kf_i$, and
  $\kf_i(u) = \f_i(u)$.
\end{proof}

The original definition of the Kashiwara operators $\ke_i$ and $\kf_i$ in \fullref{Subsection}{subsec:kashiwara} was
recursive: whether the action on $uv$ recurses to the action on $u$ or on $v$ depends on the maximum number of times
each operator can be applied to $u$ and $v$ separately. It seems difficult to give a similar recursive definition for
the quasi-Kashiwara operators $\e_i$ and $\f_i$ defined here: if, for example, both operators can be applied zero times
to $u$, this may mean that $u$ does not contain symbols $i$ or $i+1$, in which case the operators may still be defined
on $uv$, or it may mean that $u$ contains a symbol $i+1$ to the left of a symbol $i$, in which case the operators are
certainly not defined on $uv$.

This concludes the discussion contrasting the standard Kashiwara operators with the quasi-Kashiwara operators defined
here. The aim now is to use the operators $\e_i$ and $\f_i$ to build the quasi-crystal graph and to establish some of
its properties.

\begin{lemma}
  \label{lem:efinverse}
  For all $i \in \set{1,\ldots,n-1}$, the operators $\e_i$ and $\f_i$ are mutually inverse, in the sense that if
  $\e_i(u)$ is defined, $u = \f_i(\e_i(u))$, and if $\f_i(u)$ is defined, $u = \e_i(\f_i(u))$.
\end{lemma}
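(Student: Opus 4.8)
The statement asserts that $\e_i$ and $\f_i$ undo each other. The plan is to unwind the definitions directly, using the key structural observation already recorded in the text: if a word is $i$-inversion-free, then every symbol $i$ occurs to the left of every symbol $i+1$. So first I would suppose $\e_i(u)$ is defined. By definition this means $u$ is $i$-inversion-free and contains at least one symbol $i+1$; let $p$ be the position of the left-most $i+1$ in $u$, so $\e_i(u)$ is $u$ with position $p$ changed from $i+1$ to $i$. I then need to check two things: that $\f_i$ is defined on $\e_i(u)$, and that applying it recovers $u$.

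For the first point, I would argue that $\e_i(u)$ is still $i$-inversion-free. Indeed, in $u$ every symbol $i$ lies left of every symbol $i+1$; changing the left-most $i+1$ (at position $p$) to an $i$ produces a word in which the new $i$ at position $p$ lies left of all the remaining symbols $i+1$ (which were at positions $> p$) and to the right of all the old symbols $i$ (which were at positions $< p$, since they preceded position $p$), so no subsequence $(i+1)i$ is created. Also $\e_i(u)$ contains at least one symbol $i$ (namely the new one at position $p$), so $\f_i$ is defined on it. Then $\f_i(\e_i(u))$ changes the right-most $i$ of $\e_i(u)$ back to $i+1$; I must verify this right-most $i$ is again at position $p$. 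But in $\e_i(u)$ all symbols $i$ lie to the left of all symbols $i+1$, and position $p$ is immediately followed (among the relevant letters) by the former left-most $i+1$ of $u$; more simply, the symbols $i$ in $\e_i(u)$ are exactly the old symbols $i$ of $u$ (all at positions $< p$) together with the new one at position $p$, so the right-most of these is at position $p$. Hence $\f_i(\e_i(u))$ changes position $p$ from $i$ back to $i+1$, recovering $u$.

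The reverse direction is entirely symmetric: if $\f_i(u)$ is defined, then $u$ is $i$-inversion-free with at least one symbol $i$; let $q$ be the position of the right-most $i$, so $\f_i(u)$ changes position $q$ to $i+1$. The same reasoning shows $\f_i(u)$ is $i$-inversion-free and contains a symbol $i+1$, so $\e_i$ is defined on it; and the left-most $i+1$ of $\f_i(u)$ is at position $q$ (the symbols $i+1$ in $\f_i(u)$ are the old ones, all at positions $> q$, plus the new one at $q$), so $\e_i(\f_i(u))$ restores position $q$ to $i$, giving back $u$.

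I do not anticipate a genuine obstacle here; the only point requiring a little care is the verification that $\e_i(u)$ (resp.\ $\f_i(u)$) remains $i$-inversion-free, so that $\f_i$ (resp.\ $\e_i$) is defined on it in the first place — without this, "mutually inverse" would not even typecheck. That verification is exactly the position-tracking argument above, and it rests squarely on the observation that $i$-inversion-freeness means all $i$'s precede all $(i+1)$'s.
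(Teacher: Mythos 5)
Your proof is correct and follows essentially the same route as the paper's: verify that replacing the left-most $i+1$ by $i$ preserves $i$-inversion-freeness, observe that the new $i$ becomes the right-most symbol $i$, and conclude that $\f_i$ undoes $\e_i$ (with the other direction symmetric). The only difference is that you track positions a little more explicitly, but the underlying argument is the same.
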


\begin{proof}
  Let $u \in \aA_n^*$. Suppose that $\e_i(u)$ is defined. Then $u$ contains at least one symbol $i+1$ but is
  $i$-inversion-free, so that every symbol $i+1$ is to the right of every symbol $i$. Since $\e_i(u)$ is obtained from
  $u$ by replacing the \emph{left-most} symbol $i+1$ by $i$ (which becomes the right-most symbol $i$), every symbol
  $i+1$ is to the right of every symbol $i$ in the word $\e_i(u)$ and so $\e_i(u)$ is $i$-inversion-free, and $\e_i(u)$
  contains at least one symbol $i$. Thus $\f_i(\e_i(u))$ is defined, and is obtained from $\e_i(u)$ by replacing the
  right-most symbol $i$ by $i+1$, which produces $u$. Hence if $\e_i(u)$ is defined, $u = \f_i(\e_i(u))$. Similar
  reasoning shows that if $\f_i(u)$ is defined, $u = \e_i(\f_i(u))$.
\end{proof}

The operators $\e_i$ and $\f_i$ respectively increase and decrease weight whenever they are defined, in
the sense that if $\e_i(u)$ is defined, then $\wt{\e_i(u)} > \wt{u}$, and if $\f_i(u)$ is defined, then
$\wt{\f_i(u)} < \wt{u}$. This is because $\e_i$ replaces a symbol $i+1$ with $i$ whenever it is defined, which
corresponds to decrementing the $i+1$-th component and incrementing the $i$-th component of the weight, which results in
an increase with respect to the order \eqref{eq:weightorder}. Similarly, $\f_i$ replaces a symbol $i$ with $i+1$
whenever it is defined. For this reason, the $\e_i$ and $\f_i$ are respectively called the quasi-Kashiwara
\defterm{raising} and \defterm{lowering} operators.

The \defterm{quasi-crystal graph} $\Gamma_n$ is the labelled directed graph with vertex set $\aA_n^*$ and, for all
$u \in \aA_n^*$ and $i \in \set{1,\ldots,n-1}$, an edge from $u$ to $u'$ labelled by $i$ if and only if $\f_i(u) = u'$
(or, equivalently by \fullref{Lemma}{lem:efinverse}, $u = \e_i(u')$). Part of $\Gamma_4$ is shown in
\fullref{Figure}{fig:gamma4}. (The notation $\Gamma_n$ will be discarded in favour of $\Gamma(\hypo_n)$ after it has
been shown that the relationship between $\Gamma_n$ and $\hypo_n$ is analogous to that between $\Gamma(\plac_n)$ and
$\plac_n$.)

For any $u \in \aA_n^*$, let $\Gamma_n(u)$ be the connected component of $\Gamma_n$ containing the vertex $u$. Notice
that every vertex of $\Gamma_n$ has at most one incoming and at most one outgoing edge with a given label. A
\defterm{quasi-crystal isomorphism} between two connected components is a weight-preserving labelled digraph
isomorphism. (This parallels the definition of a crystal isomorphism in \fullref{Subsection}{subsec:kashiwara}.)

\begin{figure}[t]
  \centering
  \includegraphics{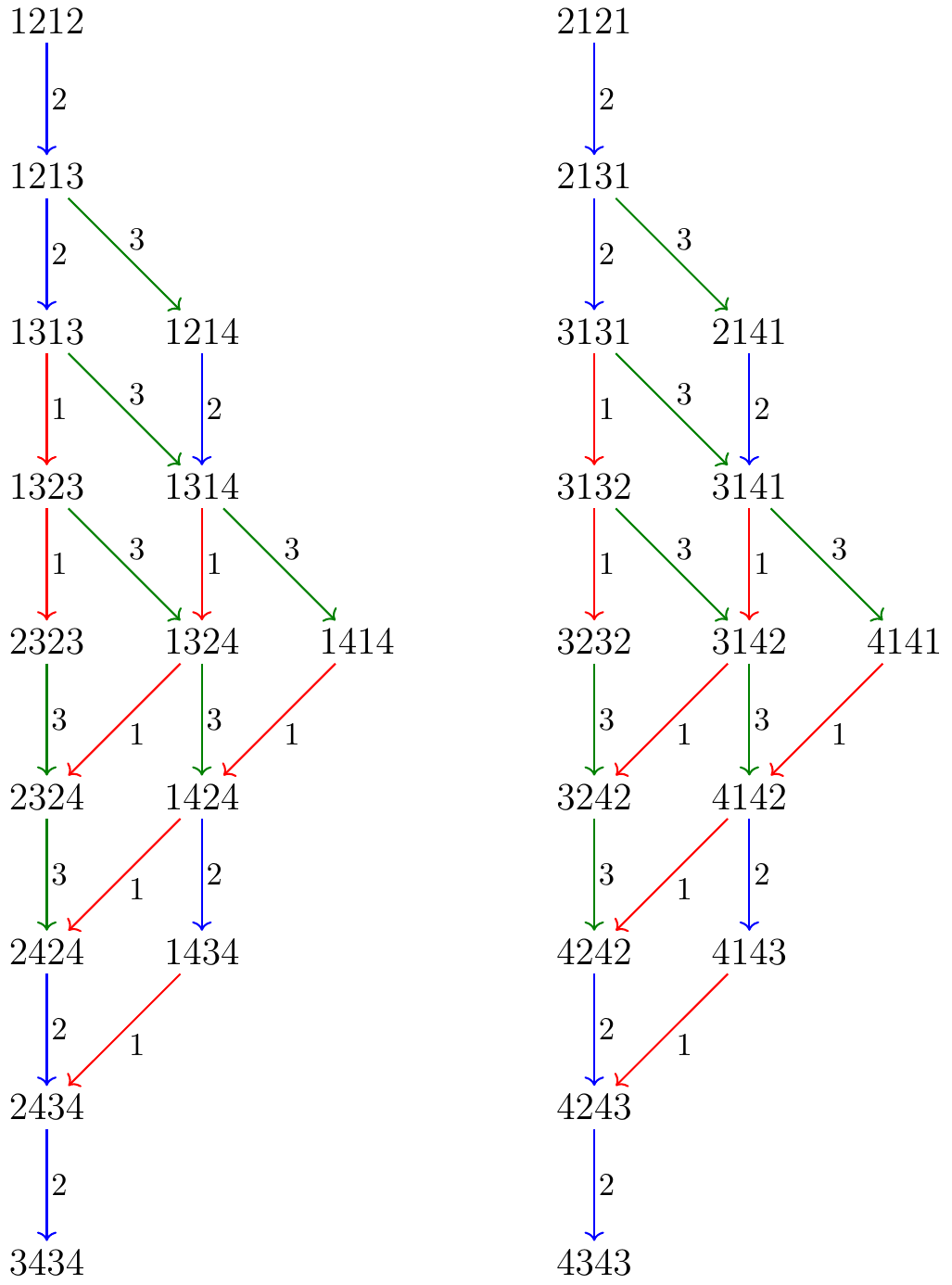}
  \caption{The isomorphic components $\Gamma_4(1212)$ and $\Gamma_4(2121)$ of the quasi-crystal graph $\Gamma_4$.}
  \label{fig:gamma4}
\end{figure}

Define a relation $\sim$ on the free monoid $\aA_n^*$ as follows: $u \sim v$ if and only if there is a quasi-crystal isomorphism
$\theta : \Gamma_n(u) \to \Gamma_n(v)$ such that $\theta(u) = v$. That is, $u \sim v$ if and only if $u$ and $v$ are in
the same position in isomorphic connected components of $\Gamma_n$. For example, $1324 \sim 3142$, since these words are
in the same position in their connected components, as can be seen in \fullref{Figure}{fig:gamma4}.

The rest of this section is dedicated to proving that the relation $\sim$ is a congruence on $\aA_n^*$. The proofs of this result and
the necessary lemmata parallel the purely combinatorial proofs by the present authors and Gray
\cite[\S~2.4]{cgm_crystal} that isomorphisms of crystal graphs give rise to congruences.

\begin{lemma}
  \label{lem:efdefinedpreservedbyiso}
  Let $u,v,u',v' \in \aA_n^*$.  Suppose $u \sim v$ and $u' \sim v'$, and let $\theta : \Gamma_n(u) \to \Gamma_n(v)$ and
  $\theta' : \Gamma_n(u') \to \Gamma_n(v')$ be quasi-crystal isomorphisms such that $\theta(u) = v$ and
  $\theta'(u') = v'$. Let $i \in \set{1,\ldots,n-1}$. Then:
  \begin{enumerate}
  \item $\e_i(uu')$ is defined if and only if $\e_i(vv')$ is defined. If both are defined, exactly one of the following
    statements holds:
    \begin{enumerate}
    \item $\e_i(uu') = u\e_i(u')$ and $\e_i(vv') = v\e_i(v')$;
    \item $\e_i(uu') = \e_i(u)u'$ and $\e_i(vv') = \e_i(v)v'$.
    \end{enumerate}
  \item $\f_i(uu')$ is defined if and only if $\f_i(vv')$ is defined. If both are defined, exactly one of the
    following statements holds:
    \begin{enumerate}
    \item $\f_i(uu') = u\f_i(u')$ and $\f_i(vv') = v\f_i(v')$;
    \item $\f_i(uu') = \f_i(u)u'$ and $\f_i(vv') = \f_i(v)v'$.
    \end{enumerate}
  \end{enumerate}
\end{lemma}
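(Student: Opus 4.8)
The plan is to reduce the statement to two invariance properties of quasi-crystal isomorphisms and then run a short case split on the words $u$ and $u'$. A quasi-crystal isomorphism $\theta : \Gamma_n(u) \to \Gamma_n(v)$ with $\theta(u) = v$ preserves weight, so $\wt{u} = \wt{v}$ and in particular $\abs{u}_i = \abs{v}_i$ and $\abs{u}_{i+1} = \abs{v}_{i+1}$; it also preserves $i$-labelled edges, so $\e_i^k(u)$ is defined if and only if $\e_i^k(v)$ is, whence $\ecount_i(u) = \ecount_i(v)$ and $\fcount_i(u) = \fcount_i(v)$. Feeding these into the characterization recorded just after the definition of the quasi-Kashiwara operators --- namely that $u$ has an $i$-inversion precisely when $\ecount_i(u) = \fcount_i(u) = 0$ while $\abs{u}_i + \abs{u}_{i+1} > 0$ --- one concludes that $u$ is $i$-inversion-free if and only if $v$ is, and likewise $u'$ is $i$-inversion-free if and only if $v'$ is. The same counts $\abs{\cdot}_i$, $\abs{\cdot}_{i+1}$ then agree on $u,v$ and on $u',v'$, which is all the numerical data the rest of the argument needs.

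Next the case analysis. If $u$ has an $i$-inversion, so does $v$, and then so do $uu'$ and $vv'$ (a prefix carrying a subsequence $(i+1)i$ forces one in the whole word); hence every operator $\e_i,\f_i$ is undefined on both $uu'$ and $vv'$, and the statement holds trivially. The case where $u'$ has an $i$-inversion is symmetric. So assume $u,u',v,v'$ are all $i$-inversion-free. Then, the only way to create a subsequence $(i+1)i$ across the join, $uu'$ has an $i$-inversion if and only if $\abs{u}_{i+1} > 0$ and $\abs{u'}_i > 0$, which by weight-preservation happens if and only if $vv'$ has an $i$-inversion; in that subcase all the operators are again undefined on both products. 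It remains to treat the case where $uu'$ and $vv'$ are both $i$-inversion-free.

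In that final subcase I work with $\e_i$; the argument for $\f_i$ is the left--right mirror image. Since $uu'$ is $i$-inversion-free, $\e_i(uu')$ is defined exactly when $\abs{u}_{i+1} + \abs{u'}_{i+1} > 0$, and since these counts equal $\abs{v}_{i+1}$ and $\abs{v'}_{i+1}$, the operator $\e_i$ is defined on $uu'$ if and only if it is defined on $vv'$; this is the asserted equivalence. Now suppose both are defined. In an $i$-inversion-free word every $i$ precedes every $i+1$, and $\e_i$ changes the left-most $i+1$; hence if $\abs{u}_{i+1} > 0$ that symbol lies in the prefix, giving $\e_i(uu') = \e_i(u)u'$ with $\e_i(u)$ defined, while if $\abs{u}_{i+1} = 0$ it lies in $u'$, giving $\e_i(uu') = u\,\e_i(u')$ with $\e_i(u')$ defined (as then $\abs{u'}_{i+1} > 0$). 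Because $\abs{u}_{i+1} = \abs{v}_{i+1}$, the very same alternative holds for $vv'$; and the two alternatives are mutually exclusive since $\e_i$ genuinely alters a letter, so that $\e_i(u)u' \neq u\,\e_i(u')$. Hence exactly one of (a), (b) holds, uniformly for both products. Part (2) follows by the mirrored argument, using the right-most symbol $i$ and the count $\abs{u'}_i$ in place of the left-most $i+1$ and $\abs{u}_{i+1}$.

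The main obstacle to be careful about is the very first step: one must verify that ``$u$ is $i$-inversion-free'' really is an isomorphism invariant. It is \emph{not} determined by the weight alone (e.g.\ $21$ and $12$ have equal weight but differ here), so it is essential to use that a quasi-crystal isomorphism preserves the $i$-edge structure --- equivalently, the definedness of $\e_i$ and $\f_i$ at each vertex --- and not merely some coarser relabelling data. Once that invariance is established, the remainder is the routine case split above and some bookkeeping about which segment of the product contains the relevant extremal symbol.
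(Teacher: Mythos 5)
Your proof is correct and follows essentially the same route as the paper's: both arguments rest on weight-preservation (to equate the counts $\abs{\cdot}_i$ and $\abs{\cdot}_{i+1}$) and on edge-preservation under $\theta,\theta'$, followed by a case split on whether the prefix contains the relevant extremal symbol. The only organizational difference is that you first isolate the invariance of the $i$-inversion-free property as a standalone fact via $\ecount_i$, $\fcount_i$ and the letter counts, whereas the paper transfers definedness of $\e_i$ directly at whichever factor contains the leftmost $i+1$; both versions are sound.
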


\begin{proof}
  Suppose $\e_i(uu')$ is defined. Then $uu'$ is $i$-inversion-free, and contains at least one symbol
  $i+1$. Hence both $u$ and $u'$ are $i$-inversion-free, and at least one of $u$ and $u'$ contains a symbol
  $i+1$. Since $\theta$ and $\theta'$ are crystal isomorphisms, $\wt{u} = \wt{v}$ and $\wt{u'} = \wt{v'}$ (and thus $u$
  and $v$ contain the same number of each symbol, and $u'$ and $v'$ contain the same number of each symbol). Consider
  separately two cases depending on whether $u$ contains a symbol $i+1$:
  \begin{itemize}

  \item Suppose that $u$ does not contain a symbol $i+1$ (and hence $v$ does not contain a symbol $i+1$). Then $u'$ must
    contain a symbol $i+1$ and indeed the left-most symbol $i+1$ of $uu'$ must lie in $u'$. So $\e_i(u')$ is defined and
    $\e_i(uu') = u\e_i(u')$. Thus there is an edge labelled by $i$ ending at $u'$. Since $\theta'$ is a quasi-crystal
    isomorphism, there is an edge labelled by $i$ ending at $v'$. Hence $\e_i(v')$ is defined, and so $v'$ is
    $i$-inversion-free. Since $v$ does not contain any symbol $i+1$, it follows that $vv'$ is $i$-inversion-free. Hence
    $\e_i(vv')$ is defined and, since the left-most symbol $i+1$ of $vv'$ lies in $v'$, it also holds that
    $\e_i(vv') = v\e_i(v')$.

  \item Suppose that $u$ contains a symbol $i+1$ (and hence $v$ contains a symbol $i+1$). Then $u'$ cannot contain a
    symbol $i$, and the left-most symbol $i+1$ of $uu'$ must lie in $u$. Then $\e_i(u)$ is defined and
    $\e_i(uu') = \e_i(u)u'$. So there is an edge labelled by $i$ ending at $u$. Since $\theta$ is a quasi-crystal
    isomorphism, there is an edge labelled by $i$ ending at $v$. Hence $\e_i(v)$ is defined, and so $v$ is
    $i$-inversion-free. Since $v'$ does not contain any symbol $i$, it follows that $vv'$ is $i$-inversion-free. Hence
    $\e_i(vv')$ is defined and, since the left-most symbol $i+1$ in $vv'$ lies in $v$, it also holds that
    $\e_i(vv') = \e_i(v)v'$.

  \end{itemize}
  This proves the forward implications of the three statements relating to $\e_i$ in part~(1). Interchanging $u$ and $u'$
  with $v$ and $v'$ proves the reverse implications. The statements for $\f_i$ in part~(2) follow similarly.
\end{proof}

\begin{lemma}
  \label{lem:gdefinedpreservedbyiso}
  Let $u,v,u',v' \in \aA_n^*$. Suppose $u \sim v$ and $u' \sim v'$, and let $\theta : \Gamma_n(u) \to \Gamma_n(v)$ and
  $\theta' : \Gamma_n(u') \to \Gamma_n(v')$ be quasi-crystal isomorphisms such that $\theta(u) = v$ and
  $\theta'(u') = v'$. Let $\g_{i_1},\ldots,\g_{i_r} \in \gset[\big]{\e_i,\f_i}{i \in \set{1,\ldots,n-1}}$. Then:
  \begin{enumerate}
  \item $\g_{i_1}\cdots\g_{i_r}(uu')$ is defined if and only if $\g_{i_1}\cdots\g_{i_r}(vv')$ is defined.
  \item When both $\g_{i_1}\cdots\g_{i_r}(uu')$ and $\g_{i_1}\cdots\g_{i_r}(vv')$ are defined, the sequence
    $\g_{i_1},\ldots,\g_{i_r}$ partitions into two subsequences $\g_{j_1},\ldots,\g_{j_s}$ and
    $\g_{k_1},\ldots,\g_{k_t}$ such that
    \begin{align*}
      \g_{i_1}\cdots\g_{i_r}(uu') &= \g_{j_1}\cdots\g_{j_s}(u)\g_{k_1}\cdots\g_{k_t}(u'), \\
      \g_{i_1}\cdots\g_{i_r}(vv') &= \g_{j_1}\cdots\g_{j_s}(v)\g_{k_1}\cdots\g_{k_t}(v'); \\
      \intertext{where}
      \theta\bigl(\g_{j_1}\cdots\g_{j_s}(u)\bigr) &= \g_{j_1}\cdots\g_{j_s}(v), \\
      \theta'\bigl(\g_{k_1}\cdots\g_{k_t}(u')\bigr) &= \g_{k_1}\cdots\g_{k_t}(v').
    \end{align*}
  \end{enumerate}
\end{lemma}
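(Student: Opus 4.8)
The plan is to prove both assertions together by induction on $r$, stripping off the operator $\g_{i_1}$ — the one applied \emph{last} in the composition $\g_{i_1}\cdots\g_{i_r}$ — and reducing to \fullref{Lemma}{lem:efdefinedpreservedbyiso}. The base case $r=0$ is trivial: the composition is the identity, so both $uu'$ and $vv'$ are ``defined'', one takes both index subsequences empty ($s=t=0$), and the claimed equalities collapse to $\theta(u)=v$ and $\theta'(u')=v'$, which hold by hypothesis; the case $r=1$ is exactly \fullref{Lemma}{lem:efdefinedpreservedbyiso}.

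For the inductive step I would first apply the induction hypothesis to the shorter sequence $\g_{i_2},\ldots,\g_{i_r}$ with the same data $u,v,u',v',\theta,\theta'$. If $\g_{i_2}\cdots\g_{i_r}(uu')$ is undefined then, by the induction hypothesis, so is $\g_{i_2}\cdots\g_{i_r}(vv')$, whence both $\g_{i_1}\cdots\g_{i_r}(uu')$ and $\g_{i_1}\cdots\g_{i_r}(vv')$ are undefined; this disposes of part~(1), with part~(2) vacuous. Otherwise, the induction hypothesis splits $(i_2,\ldots,i_r)$ into strictly increasing subsequences $(a_1,\ldots,a_p)$ and $(b_1,\ldots,b_q)$ (using indices from $\set{2,\ldots,r}$) and produces $\hat u := \g_{a_1}\cdots\g_{a_p}(u)$, $\hat u' := \g_{b_1}\cdots\g_{b_q}(u')$, together with their counterparts $\hat v,\hat v'$, such that $\hat u\hat u' = \g_{i_2}\cdots\g_{i_r}(uu')$, $\hat v\hat v' = \g_{i_2}\cdots\g_{i_r}(vv')$, $\theta(\hat u)=\hat v$ and $\theta'(\hat u')=\hat v'$. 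Since $\hat u$ is joined to $u$ by a path in $\Gamma_n$ (each quasi-Kashiwara operator moves along an edge), one has $\Gamma_n(\hat u)=\Gamma_n(u)$ and likewise $\Gamma_n(\hat v)=\Gamma_n(v)$, so $\theta$ still witnesses $\hat u\sim\hat v$, and similarly $\theta'$ witnesses $\hat u'\sim\hat v'$. Now I would apply \fullref{Lemma}{lem:efdefinedpreservedbyiso} to $\hat u,\hat v,\hat u',\hat v'$ and the single operator $\g_{i_1}$: this immediately gives part~(1), and, when both sides are defined, it tells me $\g_{i_1}$ acts on the same factor in $\hat u\hat u'$ and in $\hat v\hat v'$. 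In the case it acts on the right I would set $(j_1,\ldots,j_s):=(a_1,\ldots,a_p)$ and $(k_1,\ldots,k_t):=(1,b_1,\ldots,b_q)$ — still strictly increasing since every $b_m\ge 2$ — so that $\g_{j_1}\cdots\g_{j_s}(u)=\hat u$, $\g_{k_1}\cdots\g_{k_t}(u')=\g_{i_1}(\hat u')$, and hence $\g_{i_1}\cdots\g_{i_r}(uu') = \g_{i_1}(\hat u\hat u') = \hat u\,\g_{i_1}(\hat u')$, as required (and symmetrically for $v$); the case in which $\g_{i_1}$ acts on the left is handled the same way, with $1$ prepended to $(a_1,\ldots,a_p)$ instead.

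The one point that needs real care — and the step I expect to be the main obstacle — is verifying the image conditions for the \emph{enlarged} subsequence, concretely that $\theta'\bigl(\g_{i_1}(\hat u')\bigr)=\g_{i_1}(\hat v')$ (and the mirror statement for $\theta$ in the left-acting case). This is exactly where one must use that a quasi-crystal isomorphism is a labelled-digraph isomorphism, not merely weight-preserving: the edge of $\Gamma_n$ joining $\hat u'$ and $\g_{i_1}(\hat u')$, labelled by the index of $\g_{i_1}$ and oriented according to whether $\g_{i_1}$ is a raising or a lowering operator, must be carried by $\theta'$ to an edge with the same label and orientation incident to $\theta'(\hat u')=\hat v'$; since every vertex of $\Gamma_n$ has at most one incoming and at most one outgoing edge with a given label, the far endpoint of that edge is forced to be $\g_{i_1}(\hat v')$. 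For the subsequence to which $\g_{i_1}$ is \emph{not} appended, the corresponding image identity is simply inherited from the induction hypothesis ($\theta(\hat u)=\hat v$, respectively $\theta'(\hat u')=\hat v'$). Together these give all the displayed equalities of part~(2). Everything else is routine bookkeeping: checking that the two index subsequences remain strictly increasing and together exhaust $\set{1,\ldots,r}$ once $1$ is inserted at the front of one of them.
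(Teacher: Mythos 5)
Your proof is correct and takes essentially the same route as the paper, whose entire argument is the single sentence that the result ``follows by iterated application of \fullref{Lemma}{lem:efdefinedpreservedbyiso}''; your induction on $r$, peeling off the outermost operator $\g_{i_1}$ and noting that $\theta,\theta'$ still witness $\hat u\sim\hat v$ and $\hat u'\sim\hat v'$ so that \fullref{Lemma}{lem:efdefinedpreservedbyiso} applies again, is precisely the bookkeeping that sentence leaves implicit. The point you flag as delicate --- that $\theta'\bigl(\g_{i_1}(\hat u')\bigr)=\g_{i_1}(\hat v')$ because a labelled-digraph isomorphism carries the unique $i$-labelled edge at $\hat u'$ to the unique $i$-labelled edge at $\hat v'$ --- is exactly what the paper means by ``the last two equalities holding because $\theta$ and $\theta'$ are quasi-crystal isomorphisms.''
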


\begin{proof}
  This result follows by iterated application of \fullref{Lemma}{lem:efdefinedpreservedbyiso}, with the last two
  equalities holding because $\theta$ and $\theta'$ are quasi-crystal isomorphisms with $\theta(u) = v$ and
  $\theta'(u') = v'$.
\end{proof}

\begin{proposition}
  \label{prop:isomdefinescong}
  The relation $\sim$ is a congruence on the free monoid $\aA_n^*$.
\end{proposition}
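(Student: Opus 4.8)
The plan is to verify first that $\sim$ is an equivalence relation and then that it respects concatenation, the second part being the substantial one. The equivalence-relation axioms need nothing beyond generalities: reflexivity holds because the identity map on $\Gamma_n(u)$ is a quasi-crystal isomorphism fixing $u$; symmetry because the inverse of a quasi-crystal isomorphism is again a quasi-crystal isomorphism; and transitivity because a composite of quasi-crystal isomorphisms preserves weight and labelled edges, hence is again one. None of this uses the preceding lemmata.

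For compatibility it is enough, in view of reflexivity, to prove that $u \sim v$ and $u' \sim v'$ together imply $uu' \sim vv'$ (taking $u' = v' = w$, respectively $u = v = w$, then recovers $uw \sim vw$ and $wu \sim wv$). So fix quasi-crystal isomorphisms $\theta : \Gamma_n(u) \to \Gamma_n(v)$ and $\theta' : \Gamma_n(u') \to \Gamma_n(v')$ with $\theta(u) = v$ and $\theta'(u') = v'$. Since $\Gamma_n(uu')$ is connected and its directed labelled edges are precisely the pairs $\parens{w,\f_i(w)}$, every vertex $w$ of $\Gamma_n(uu')$ can be written as $w = \g_{i_1}\cdots\g_{i_r}(uu')$ for some $\g_{i_1},\ldots,\g_{i_r} \in \gset{\e_i,\f_i}{i \in \set{1,\ldots,n-1}}$ (following an edge forwards applies some $\f_i$, backwards some $\e_i$, by \fullref{Lemma}{lem:efinverse}). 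I would then define $\Theta : \Gamma_n(uu') \to \Gamma_n(vv')$ by $\Theta\parens{\g_{i_1}\cdots\g_{i_r}(uu')} = \g_{i_1}\cdots\g_{i_r}(vv')$; by \fullref{Lemma}{lem:gdefinedpreservedbyiso}(1) the right-hand side is defined whenever the left-hand side is, and lies in $\Gamma_n(vv')$, so this at least makes sense as an assignment.

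The crux, and the step I expect to be the main obstacle, is showing $\Theta$ is well-defined. Suppose $\g_{i_1}\cdots\g_{i_r}(uu') = \h_{l_1}\cdots\h_{l_{r'}}(uu')$ for two sequences of operators. Applying \fullref{Lemma}{lem:gdefinedpreservedbyiso}(2) to each side writes the common word both as $\g_{j_1}\cdots\g_{j_s}(u)\,\g_{k_1}\cdots\g_{k_t}(u')$ and as $\h_{m_1}\cdots\h_{m_{s'}}(u)\,\h_{p_1}\cdots\h_{p_{t'}}(u')$. Because quasi-Kashiwara operators preserve word length, every element of $\Gamma_n(u)$ has length $\abs{u}$, so both factorizations split the word after its first $\abs{u}$ letters; hence $\g_{j_1}\cdots\g_{j_s}(u) = \h_{m_1}\cdots\h_{m_{s'}}(u)$ and $\g_{k_1}\cdots\g_{k_t}(u') = \h_{p_1}\cdots\h_{p_{t'}}(u')$. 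Feeding the first equality through the identity $\theta\parens{\g_{j_1}\cdots\g_{j_s}(u)} = \g_{j_1}\cdots\g_{j_s}(v)$ of \fullref{Lemma}{lem:gdefinedpreservedbyiso}(2), and using that $\theta$ is a function, gives $\g_{j_1}\cdots\g_{j_s}(v) = \h_{m_1}\cdots\h_{m_{s'}}(v)$; likewise on the primed side $\g_{k_1}\cdots\g_{k_t}(v') = \h_{p_1}\cdots\h_{p_{t'}}(v')$. Concatenating, $\g_{i_1}\cdots\g_{i_r}(vv') = \h_{l_1}\cdots\h_{l_{r'}}(vv')$, so $\Theta$ is well-defined.

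It remains to check the conditions for a quasi-crystal isomorphism and that $uu' \mapsto vv'$, all of which are short. Running the same construction with $\theta^{-1}$ and $\theta'^{-1}$ yields a map $\Gamma_n(vv') \to \Gamma_n(uu')$ visibly inverse to $\Theta$, so $\Theta$ is bijective. For weight, if $w = \g_{i_1}\cdots\g_{i_r}(uu')$ decomposes as $a\,a'$ with $a \in \Gamma_n(u)$, $a' \in \Gamma_n(u')$, then $\Theta(w) = \theta(a)\,\theta'(a')$, and since weight is additive over concatenation and $\theta$, $\theta'$ preserve weight, $\wt{\Theta(w)} = \wt{\theta(a)} + \wt{\theta'(a')} = \wt{a} + \wt{a'} = \wt{w}$. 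For edges: for each $i$, $\f_i(w)$ is defined iff $\f_i(\Theta(w))$ is defined by \fullref{Lemma}{lem:gdefinedpreservedbyiso}(1) applied to the sequence $\f_i,\g_{i_1},\ldots,\g_{i_r}$, and when defined $\Theta(\f_i(w)) = \f_i\g_{i_1}\cdots\g_{i_r}(vv') = \f_i(\Theta(w))$, so $\Theta$ carries $i$-labelled edges to $i$-labelled edges in both directions. Finally $\Theta(uu') = vv'$ is the case $r = 0$, so $uu' \sim vv'$; with reflexivity this shows $\sim$ is a congruence.
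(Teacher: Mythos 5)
Your proposal is correct and follows essentially the same route as the paper: the same definition of $\Theta$ via a chosen sequence of operators applied to $vv'$, the same well-definedness argument via Lemma~\ref{lem:gdefinedpreservedbyiso}(2) (splitting the common image at position $\abs{u}$ and pushing through $\theta$ and $\theta'$), and the same checks of weight and edge preservation. The only differences are cosmetic — you spell out the equivalence-relation axioms and the bijectivity of $\Theta$ slightly more explicitly than the paper does.
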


\begin{proof}
  It is clear from the definition that $\sim$ is an equivalence relation; it thus remains to prove that $\sim$ is
  compatible with multiplication in $\aA_n^*$.

  Suppose $u \sim v$ and $u' \sim v'$. Then there exist quasi-crystal isomorphisms $\theta : \Gamma_n(u) \to \Gamma_n(v)$ and
  $\theta : \Gamma_n(u') \to \Gamma_n(v')$ such that $\theta(u) = v$ and $\theta'(u') = v'$.

  Define a map $\Theta : \Gamma_n(uu') \to \Gamma_n(vv')$ as follows. For $w \in \Gamma_n(uu')$, choose
  $\g_{i_1},\ldots,\g_{i_r} \in \gset[\big]{\e_i,\f_i}{i \in \set{1,\ldots,n-1}}$ such that $\g_{i_1}\cdots\g_{i_r}(uu') = w$; such a sequence
  exists because $w$ lies in the connected component $\Gamma_n(uu')$. Define $\Theta(w)$ to be
  $\g_{i_1}\cdots\g_{i_r}(vv')$; note that this is defined by \fullref[(1)]{Lemma}{lem:gdefinedpreservedbyiso}.

  It is necessary to prove that $\Theta$ is well-defined. Suppose that
  $\g_{\hat{\imath}_1},\ldots,\g_{\hat{\imath}_m} \in \gset[\big]{\e_i,\f_i}{i \in \set{1,\ldots,n-1}}$ is such that
  $\g_{\hat{\imath}_1}\cdots\g_{\hat{\imath}_m}(uu') = w$, and let
  $z = \g_{\hat{\imath}_1}\cdots\g_{\hat{\imath}_m}(vv')$. Note that by
  \fullref[(2)]{Lemma}{lem:gdefinedpreservedbyiso},
  \begin{itemize}
  \item the
    sequence $\g_{i_1},\ldots,\g_{i_r}$ partitions into two subsequences $\g_{j_1},\ldots,\g_{j_s}$ and
    $\g_{k_1},\ldots,\g_{k_t}$ such that
    \begin{equation}
      \label{eq:isomdefinescong1}
      \g_{i_1}\cdots\g_{i_r}(uu') = \g_{j_1}\cdots\g_{j_s}(u)\g_{k_1}\cdots\g_{k_t}(u');
    \end{equation}
  \item the sequence
    $\g_{\hat{\imath}_1},\ldots,\g_{\hat{\imath}_m}$ partitions into two subsequences $\g_{\hat{\jmath}_1},\ldots,\g_{\hat{\jmath}_p}$ and
    $\g_{\hat{k}_1},\ldots,\g_{\hat{k}_q}$ such that
    \begin{equation}
      \label{eq:isomdefinescong2}
      \g_{\hat{\imath}_1}\cdots\g_{\hat{\imath}_m}(uu') = \g_{\hat{\jmath}_1}\cdots\g_{\hat{\jmath}_p}(u)\g_{\hat{k}_1}\cdots\g_{\hat{k}_q}(u').
    \end{equation}
  \end{itemize}
  Since both $\g_{i_1}\cdots\g_{i_r}(uu')$ and $\g_{\hat{\imath}_1}\cdots\g_{\hat{\imath}_m}(uu')$ equal $w$, and since $\g_{j_1}\cdots\g_{j_s}(u)$ and $\g_{\hat{\jmath}_1}\cdots\g_{\hat{\jmath}_p}(u)$ have length $|u|$, it follows that
  \begin{equation}
    \label{eq:isomdefinescong3}
    \begin{aligned}
      \g_{j_1}\cdots\g_{j_s}(u) &= \g_{\hat{\jmath}_1}\cdots\g_{\hat{\jmath}_p}(u); \\
      \g_{k_1}\cdots\g_{k_t}(u') &= \g_{\hat{k}_1}\cdots\g_{\hat{k}_q}(u').
    \end{aligned}
  \end{equation}
  Then
  \begin{align*}
    \Theta(w) &= \g_{i_1}\cdots\g_{i_r}(vv') && \text{[by definition]} \\
              &= \g_{j_1}\cdots\g_{j_s}(v)\g_{k_1}\cdots\g_{k_t}(v') && \text{[by \eqref{eq:isomdefinescong1}]} \displaybreak[0]\\
              &= \theta\bigl(\g_{j_1}\cdots\g_{j_s}(u)\bigr)\theta'\bigl(\g_{k_1}\cdots\g_{k_t}(u')\bigr) \displaybreak[0]\\
              &= \theta\bigl(\g_{\hat{\jmath}_1}\cdots\g_{\hat{\jmath}_p}(u)\bigr)\theta'\bigl(\g_{\hat{k}_1}\cdots\g_{\hat{k}_q}(u')\bigr) && \text{[by
                                                                                                                                  \eqref{eq:isomdefinescong3}]}\displaybreak[0]\\
              &= \g_{\hat{\jmath}_1}\cdots\g_{\hat{\jmath}_p}(v)\g_{\hat{k}_1}\cdots\g_{\hat{k}_q}(v') \displaybreak[0]\\
              &= \g_{\hat{\imath}_1}\cdots\g_{\hat{\imath}_m}(vv') && \text{[by \eqref{eq:isomdefinescong2}]} \\
              &= z.
  \end{align*}
  So $\Theta$ is well-defined.

  By \fullref[(1)]{Lemma}{lem:gdefinedpreservedbyiso}, $\Theta$ and its inverse preserve labelled edges. To see that
  $\Theta$ preserves weight, proceed as follows. Since $\theta$ and $\theta'$ are quasi-crystal isomorphisms,
  $\wt{u} = \wt{v}$ and $\wt{u'} = \wt{v'}$. Hence $\wt{uu'} = \wt{vv'}$. Therefore if $\g_{i_1}\cdots\g_{i_r}(uu')$ and
  $\g_{i_1}\cdots\g_{i_r}(vv')$ are both defined, then both sequences of operators partition as above, and so
  \begin{align*}
    \wt{w} ={}& \wt[\big]{\g_{i_1}\cdots\g_{i_r}(uu')} \displaybreak[0]\\
    ={}& \wt[\big]{\g_{j_1}\cdots\g_{j_s}(u)\g_{k_1}\cdots\g_{k_t}(u')} \displaybreak[0]\\
    ={}& \wt[\big]{\g_{j_1}\cdots\g_{j_s}(u)}+\wt[\big]{\g_{k_1}\cdots\g_{k_t}(u')} \displaybreak[0]\\
    ={}& \wt[\big]{\theta(\g_{j_1}\cdots\g_{j_s}(u))}+\wt[\big]{\theta'(\g_{k_1}\cdots\g_{k_t}(u'))} \\
    &\qquad\text{[since $\theta$ and $\theta'$ preserve weights]} \displaybreak[0]\\
    ={}& \wt[\big]{\g_{j_1}\cdots\g_{j_s}(\theta(u))}+\wt[\big]{\g_{k_1}\cdots\g_{k_t}(\theta'(u'))} \\
    &\qquad\text{[since $\theta$ and $\theta'$ are quasi-crystal isomorphisms]} \displaybreak[0]\\
    ={}& \wt[\big]{\g_{j_1}\cdots\g_{j_s}(v)}+\wt[\big]{\g_{k_1}\cdots\g_{k_t}(v')} \displaybreak[0]\\
    ={}& \wt[\big]{\g_{j_1}\cdots\g_{j_s}(v)\g_{k_1}\cdots\g_{k_t}(v')} \displaybreak[0]\\
    ={}& \wt[\big]{\g_{i_1}\cdots\g_{i_r}(vv')} \\
    ={}& \wt{\Theta(w)}.
  \end{align*}

  Thus $\Theta$ is a quasi-crystal isomorphism. Hence $uu' \sim vv'$. Therefore the relation $\sim$ is a congruence.
\end{proof}

\section{The quasi-crystal graph and the hypoplactic monoid}
\label{sec:quasicrystalandhypo}

Since $\sim$ is a congruence, it makes sense to define the factor monoid $H_n = \aA_n^*/{\sim}$. The aim is now to show
that $H_n$ is the hypoplactic monoid $\hypo_n$ by showing that $\sim$ is equal to the relation $\hypocong$ on $\aA_n^*$
as defined in \fullref{Section}{sec:quasiribbontableau} using quasi-ribbon tableaux and
\fullref{Algorithm}{alg:hypoplacticinsert}. Some of the lemmata in this section are more complicated than necessary for
this aim, because they will be used in future sections to prove other results.

\begin{proposition}
  \label{prop:eipreservesstd}
  Let $u \in \aA_n^*$ and $i \in \set{1,\ldots,n-1}$.
  \begin{enumerate}
  \item If $\e_i(u)$ is defined, then $\std{\e_i(u)} = \std{u}$.
  \item If $\f_i(u)$ is defined, then $\std{\f_i(u)} = \std{u}$.
  \end{enumerate}
\end{proposition}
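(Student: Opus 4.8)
The plan is to obtain part~(2) as an immediate consequence of part~(1), and to prove part~(1) by a direct position-by-position computation. For the reduction: if $\f_i(u)$ is defined, then by \fullref{Lemma}{lem:efinverse} the word $w = \f_i(u)$ satisfies $\e_i(w) = u$, so in particular $\e_i(w)$ is defined; applying part~(1) to $w$ gives $\std{w} = \std{\e_i(w)}$, that is, $\std{\f_i(u)} = \std{u}$. Thus only part~(1) needs a genuine argument.

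For part~(1) I would use the explicit description of standardization at a single position: for any $w = w_1\cdots w_k \in \aA^*$ and any $t$,
\[
\std{w}_t = \#\{\, s : w_s < w_t \,\} + \#\{\, s \le t : w_s = w_t \,\},
\]
which is immediate from the definition, since $\std{w}_t$ is the rank of the subscripted letter attached to position $t$ in the total order on all subscripted letters, and that rank equals the number of subscripted letters of strictly smaller value plus the number of occurrences of $w_t$ among the first $t$ letters of $w$. It therefore suffices to show that, writing $v = \e_i(u)$, the right-hand side is the same for $u$ and for $v$ at every position $t$.

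Next I would record the structure of the edit performed by $\e_i$. Since $\e_i(u)$ is defined, $u$ is $i$-inversion-free and contains at least one symbol $i+1$, so every occurrence of $i$ in $u$ precedes every occurrence of $i+1$. Let $P$ be the position of the left-most symbol $i+1$ in $u$; then $v$ agrees with $u$ at every position except $P$, where $u_P = i+1$ and $v_P = i$, and moreover $P$ lies strictly to the right of every occurrence of $i$ in $u$ and is the left-most occurrence of $i+1$. With this in hand one checks the equality of the two counts at a position $t$ by cases on the value $u_t$: when $u_t \notin \{i,i+1\}$ the position $P$ makes the same contribution (possibly none) to each count for $u$ and for $v$, so there is nothing to prove; the only cases requiring care are $u_t = i$, $u_t = i+1$, and $t = P$. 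In each of these the alteration at $P$ changes one of the two counts by $\pm 1$, and this is exactly compensated by the opposite change in the other count — the decisive facts being that every $i$ of $u$ lies strictly to the left of $P$ (so that $P \le t$ fails when $u_t = i$) and that $P$ is the left-most $i+1$ (so that $P \le t$ holds when $u_t = i+1$).

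The main obstacle is purely the case-by-case bookkeeping of these $\pm 1$ adjustments; there is no conceptual difficulty. Conceptually the statement says that shifting by one step the boundary between the block of letters $i$ and the block of letters $i+1$ (which are consecutive in standardization order, since nothing lies strictly between them) does not disturb the induced linear order on all subscripted letters, and hence does not change the standardization.
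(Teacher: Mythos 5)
Your proof is correct and rests on the same observation as the paper's: since $u$ is $i$-inversion-free, the leftmost $i+1$ sits immediately after all $\sigma$ symbols $i$ in the standardization order, so replacing it by $i$ merely relabels the subscripted symbol $(i+1)_1$ as $i_{\sigma+1}$ without changing the rank assigned to any position --- your explicit rank formula and case analysis is just a positionwise rendering of exactly this. The only genuine (and welcome) difference is that you deduce part~(2) from part~(1) via \fullref{Lemma}{lem:efinverse}, where the paper settles for `similar reasoning'.
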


\begin{proof}
  Suppose $\e_i(u)$ is defined, and that $u$ contains $\sigma$ symbols $i$ and $\tau$ symbols $i+1$. Then during the computation
  of $\std{u}$, these symbols $i$ and $i+1$ become $i_1,\ldots,i_\sigma$ and $(i+1)_1,\ldots,(i+1)_\tau$ when subscripts are
  attached.  Since $\e_i(u)$ is defined, $u$ is $i$-inversion-free and so every symbol $i$ in $u$ is to the left of every
  symbol $i+1$, and $\e_i(u)$ is obtained from $u$ by replacing the leftmost symbol $i+1$ by $i$. Thus, during the
  computation of $\std{\e_i(u)}$, the symbols $i$ and $i+1$ become $i_1,\ldots,i_{\sigma+1}$ and
  $(i+1)_1,\ldots,(i+1)_{\tau-1}$. Thus, symbols $i_1,\ldots,i_\sigma,(i+1)_1,\ldots,(i+1)_{\tau}$ and
  $i_1,\ldots,i_{\sigma+1},(i+1)_1,\ldots,(i+1)_{\tau-1}$ are replaced by the same symbols of the same rank in $\aA$. Hence
  $\std{\e_i(u)} = \std{u}$. This proves part~(1); similar reasoning proves part~(2).
\end{proof}

\begin{corollary}
  \label{corol:qrwsameshapecomponent}
  If $u \in \aA_n^*$ is a quasi-ribbon word, then every word in $\Gamma_n(u)$ is quasi-ribbon word, and all of the
  corresponding quasi-ribbon tableaux have the same shape as $\qrtableau{u}$.
\end{corollary}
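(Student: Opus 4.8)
The plan is to reduce everything to \fullref{Proposition}{prop:eipreservesstd} (the quasi-Kashiwara operators preserve standardization), together with the two facts from \fullref{Section}{sec:quasiribbontableau} that translate ``being a quasi-ribbon word'' and ``the shape of the quasi-ribbon tableau'' into statements about the standardization. First I would note that any $v \in \Gamma_n(u)$ is reachable from $u$ by a finite sequence of quasi-Kashiwara operators: the edges of $\Gamma_n$ are, by definition, given by the operators $\f_i$ (equivalently $\e_i$ traversed backwards), so connectedness of $\Gamma_n(u)$ supplies $\g_{i_1},\dots,\g_{i_r} \in \gset[\big]{\e_i,\f_i}{i \in \set{1,\ldots,n-1}}$ with $v = \g_{i_1}\cdots\g_{i_r}(u)$, and every intermediate word $\g_{i_j}\cdots\g_{i_r}(u)$ is again a vertex of $\Gamma_n(u)$.

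Next I would apply \fullref{Proposition}{prop:eipreservesstd} along this chain: each operator $\g_{i_j}$ is applied to a word on which it is defined (namely the vertex $\g_{i_{j+1}}\cdots\g_{i_r}(u)$), so each step leaves the standardization unchanged, and therefore $\std{v} = \std{u}$. Since a standard word determines its inverse permutation, it follows that $\std{v}^{-1} = \std{u}^{-1}$, and hence $\descomp{\std{v}^{-1}} = \descomp{\std{u}^{-1}}$.

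Now the two conclusions drop out. Because $u$ is a quasi-ribbon word, \fullref{Proposition}{prop:uqrwiffstduqrw} gives that $\std{u}$ is a quasi-ribbon word; as $\std{v} = \std{u}$, the word $\std{v}$ is a quasi-ribbon word, so \fullref{Proposition}{prop:uqrwiffstduqrw} applied in the other direction shows $v$ is a quasi-ribbon word. For the shape, \fullref{Proposition}{prop:shapeqrtuisdescomp} says the shape of $\qrtableau{v}$ is $\descomp{\std{v}^{-1}}$, which equals $\descomp{\std{u}^{-1}}$, the shape of $\qrtableau{u}$.

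I do not expect any genuine obstacle here: the whole argument is carried by the invariance of standardization under the quasi-Kashiwara operators, and the only point needing a moment's care is the routine bookkeeping that every operator in the chain $\g_{i_1},\dots,\g_{i_r}$ is indeed applied to a word of the connected component (hence to a word on which it is defined), so that \fullref{Proposition}{prop:eipreservesstd} applies at each step.
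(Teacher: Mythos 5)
Your proposal is correct and is essentially the paper's own proof: both reduce the statement to the invariance of standardization under the quasi-Kashiwara operators (Proposition~\ref{prop:eipreservesstd}), then invoke Proposition~\ref{prop:uqrwiffstduqrw} for the quasi-ribbon-word property and Proposition~\ref{prop:shapeqrtuisdescomp} for the shape. The only difference is that you spell out the routine induction along a path in the connected component, which the paper leaves implicit.
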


\begin{proof}
  Let $w \in \Gamma_n(u)$. By \fullref{Proposition}{prop:eipreservesstd}, $\std{w} = \std{u}$. Thus by
  \fullref{Proposition}{prop:uqrwiffstduqrw}, $w$ is a quasi-ribbon word, and by
  \fullref{Proposition}{prop:shapeqrtuisdescomp}, $\qrtableau{w}$ has the same shape as $\qrtableau{u}$.
\end{proof}

\begin{lemma}
  \label{lem:efdefinedfortableau}
  Let $i \in \set{1,\ldots,n-1}$. Let $u \in \aA_n^*$ be a quasi-ribbon word. Then:
  \begin{enumerate}
  \item $\e_i(u)$ is defined if and only if $\qrtableau{u}$ contains some symbol $i+1$ but does not contain $i+1$ below $i$
    in the same column;
  \item $\f_i(u)$ is defined if and only if $\qrtableau{u}$ contains some symbol $i$ but does not contain $i+1$ below $i$
    in the same column.
  \end{enumerate}
\end{lemma}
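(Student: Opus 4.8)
The plan is to reduce both parts to a single combinatorial fact: for a quasi-ribbon word $u$, the word $u$ has an $i$-inversion if and only if $\qrtableau{u}$ contains $i+1$ immediately below $i$ in some column. Granting this, recall from the definition of the quasi-Kashiwara operators (and the immediate consequences recorded just after it) that $\e_i(u)$ is defined exactly when $u$ is $i$-inversion-free and contains at least one symbol $i+1$, while $\f_i(u)$ is defined exactly when $u$ is $i$-inversion-free and contains at least one symbol $i$. Since $u$ and $\qrtableau{u}$ carry the same multiset of entries, ``$u$ contains a symbol $i+1$'' is the same as ``$\qrtableau{u}$ contains a symbol $i+1$'', and likewise for $i$; combining this with the combinatorial fact (in contrapositive form for the $i$-inversion-free condition) yields (1) and (2) simultaneously.

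To establish the combinatorial fact, I would first collect the structural features of $\qrtableau{u}$ that are needed: (a) all cells bearing a given symbol $a$ lie in a single row, which I call $r_a$ (already noted in the text); (b) these row indices are monotone, $r_a \le r_b$ whenever $a \le b$; and (c) in a ribbon shape, the rightmost column of row $h$ is at most the leftmost column of row $h+1$, with equality precisely when rows $h$ and $h+1$ are adjacent (this is their shared corner cell), and consequently row $h$ lies weakly to the left of every lower row. Fact (c) is immediate from the definition of a ribbon diagram. Fact (b) follows by noting that consecutive rows of the diagram share a column in which the upper entry is strictly smaller than the lower entry, and rows are weakly increasing, so every entry of row $h$ is strictly smaller than every entry of row $h+1$, hence strictly smaller than every entry of any lower row; this forces $r_a \le r_b$ for $a \le b$.

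For the forward direction of the combinatorial fact, suppose $u$ has an $i$-inversion. In the column reading --- columns left to right, each read from bottom to top --- some cell carrying $i+1$, say at position $(r_{i+1}, c_1)$, is read before some cell carrying $i$, at position $(r_i, c_2)$. By the reading order, either $c_1 = c_2$ with $r_{i+1} > r_i$, or $c_1 < c_2$. In the first case, column $c_1$ contains $i$ strictly above $i+1$; since these are consecutive values, the two cells are adjacent, which is the desired configuration. In the second case I derive a contradiction: $r_i = r_{i+1}$ is impossible, since then $i$ would lie strictly to the right of $i+1$ in a single row, contradicting the weakly-increasing-row condition; hence $r_i < r_{i+1}$ by (b), and then (c) gives $c_2 \le (\text{rightmost column of row } r_i) \le (\text{leftmost column of row } r_{i+1}) \le c_1$, contradicting $c_1 < c_2$. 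The converse is easy: if some column of $\qrtableau{u}$ has $i$ in row $h$ and $i+1$ in row $h+1$, then reading that column from bottom to top produces the factor $(i+1)i$, so $u$ has an $i$-inversion.

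The main obstacle is not a single hard step but keeping the bookkeeping straight --- precisely, pinning down fact (b) and the column-order argument in the second case of the forward direction; once those are in hand, the rest is routine.
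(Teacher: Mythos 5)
Your proposal is correct and follows essentially the same route as the paper: the paper's proof also reduces both parts to the single fact that a quasi-ribbon word $u$ has an $i$-inversion if and only if $i$ and $i+1$ occupy the same column of $\qrtableau{u}$, and then invokes the definition of $\e_i$ and $\f_i$. The paper simply asserts that fact from the reading order and the row-monotonicity of quasi-ribbon tableaux, whereas you supply the column-position bookkeeping (your facts (b) and (c)) explicitly; that added detail is sound.
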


\begin{proof}
  By the definition of the reading of a quasi-ribbon tableau, and the fact that the rows of quasi-ribbon tableaux are
  non-decreasing from left to right, the word $u$ has an $i$-inversion if and only if $i+1$ and $i$ appear in the same
  column of $\qrtableau{u}$. Thus part~(1) follows immediately from the definition of $\e_i$. Part~(2) follows by
  similar reasoning for $\f_i$.
\end{proof}

\begin{proposition}
  \label{prop:charhighestweighttableau}
  Let $\alpha$ be a composition.
  \begin{enumerate}
  \item The set of quasi-ribbon words corresponding to quasi-ribbon tableaux of shape $\alpha$ forms a single connected
    component of $\Gamma_n$.
  \item In this connected component, there is a unique highest-weight word, which corresponds to the quasi-ribbon
    tableau of shape $\alpha$ whose $j$-th row consists entirely of symbols $j$, for $j=1,\ldots,\clen\alpha$.
  \end{enumerate}
\end{proposition}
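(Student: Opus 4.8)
The plan is to prove both parts at once by identifying the highest-weight word of the component in question. Throughout, assume $\clen\alpha \le n$ (otherwise no quasi-ribbon tableau of shape $\alpha$ has entries in $\aA_n$ and the statement is vacuous). Let $T_{\alpha}$ be the tableau described in part~(2), with $\alpha_j$ copies of $j$ in its $j$-th row, and put $w_\alpha = \colreading{T_\alpha}$. One first observes that $T_\alpha$ really is a quasi-ribbon tableau: its rows are constant hence non-decreasing, and consecutive rows $j,j+1$ meet in a single column whose entries are $j$ above $j+1$. I would then check that $w_\alpha$ is a highest-weight vertex of $\Gamma_n$, i.e.\ that $\e_i(w_\alpha)$ is undefined for every $i\in\{1,\dots,n-1\}$: for $i<\clen\alpha$ the tableau $T_\alpha$ contains the symbol $i+1$ directly below $i$ in a common column (the last cell of row $i$ sits directly above the first cell of row $i+1$), while for $i\ge\clen\alpha$ it contains no symbol $i+1$; in either case \fullref{Lemma}{lem:efdefinedfortableau}(1) makes $\e_i(w_\alpha)$ undefined.

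The core of the argument is the converse uniqueness statement: $w_\alpha$ is the \emph{only} highest-weight quasi-ribbon word of shape $\alpha$. Let $w$ be such a word, let $T=\qrtableau{w}$ be its tableau (which has shape $\alpha$ by \fullref{Corollary}{corol:qrwsameshapecomponent}), and let $1\le a_1<\cdots<a_k$ be the distinct entries of $T$. Recalling that in a quasi-ribbon tableau each value occupies a single row and the row-by-row reading is non-decreasing (so the top-left entry equals the minimum $a_1$), I would feed the hypothesis that $\e_i(w)$ is undefined for all $i\in\{1,\dots,n-1\}$ through \fullref{Lemma}{lem:efdefinedfortableau}(1) to obtain: whenever $i+1$ occurs in $T$, the symbols $i$ and $i+1$ lie in a common column, and hence $i$ sits immediately above $i+1$. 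Applying this with $i=a_m-1$ (valid since $2\le a_m\le n$ when $a_m\ge 2$) shows that $a_m-1$ occurs in $T$ whenever $a_m\ge 2$; as $a_{m-1}$ and $a_m$ are consecutive distinct values this forces $a_m=a_{m-1}+1$, so $a_m=m$ for all $m$. Since a common column of $m-1$ and $m$ places every $m$ directly below an $m-1$, an induction on $m$ (with base $m=1$, the $1$'s lying in the row of the top-left cell) shows all copies of $m$ lie in row $m$; hence the nonempty rows of $T$ are exactly rows $1,\dots,k$, which forces $k=\clen\alpha$ and $T=T_\alpha$.

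To conclude I would use finiteness. Each connected component of $\Gamma_n$ consists of words of one common length, of which $\aA_n^*$ contains only finitely many, and by \fullref{Section}{sec:quasikashiwara} every raising operator $\e_i$ strictly increases weight. So starting from any quasi-ribbon word $u$ of shape $\alpha$ and iterating raising operators can never revisit a vertex, hence must terminate at a vertex on which no $\e_i$ is defined; by \fullref{Corollary}{corol:qrwsameshapecomponent} that vertex is again a quasi-ribbon word of shape $\alpha$, so by the uniqueness just proved it equals $w_\alpha$. Thus $\Gamma_n(u)=\Gamma_n(w_\alpha)$ for every quasi-ribbon word $u$ of shape $\alpha$, while conversely \fullref{Corollary}{corol:qrwsameshapecomponent} guarantees every vertex of $\Gamma_n(w_\alpha)$ is such a word; this is part~(1). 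For part~(2), $w_\alpha$ is reachable from every vertex of this component by raising operators and raising strictly increases weight, so $w_\alpha$ is the unique highest-weight word, and it corresponds to $T_\alpha$ by construction.

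I expect the uniqueness step to be the main obstacle: the delicate point is to translate ``$\e_i(w)$ undefined for all $i$'' correctly through \fullref{Lemma}{lem:efdefinedfortableau} into the column condition on $T$, and then to arrange the induction so that it simultaneously pins down $a_m=m$, locates every value in its own row, and forces $k=\clen\alpha$. The remaining ingredients --- that $T_\alpha$ is a valid quasi-ribbon tableau, that components are finite, and that iterating raising operators terminates --- are routine.
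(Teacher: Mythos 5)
Your proposal is correct and follows essentially the same route as the paper: both verify via \fullref{Lemma}{lem:efdefinedfortableau} that the reading of $T_\alpha$ is highest-weight, both establish that it is the \emph{unique} highest-weight quasi-ribbon word of shape $\alpha$, and both then use the fact that the raising operators strictly increase weight while preserving quasi-ribbon words and their shape (\fullref{Corollary}{corol:qrwsameshapecomponent}) to conclude that all such words lie in one component. The only cosmetic difference is that you prove uniqueness directly (all $\e_i$ undefined forces the entries to be $1,\dots,\clen\alpha$, each in its own row), whereas the paper argues contrapositively from a minimal misplaced symbol, and you make explicit the termination argument the paper leaves implicit.
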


\begin{proof}
  Let $w$ be the quasi-ribbon word such that $\qrtableau{w}$ has shape $\alpha$ and has $j$-th row full of symbols $j$,
  for each $j=1,\ldots,\clen\alpha$. Clearly $\e_i(w)$ cannot be defined for any $i \geq \clen\alpha$, since there are
  no symbols $i+1$ in the word $w$. Furthermore, for $i = 1,\ldots,\clen\alpha-1$, the right-most entry $i$ in the
  $i$-th row of $T$ lies immediately above the left-most entry $i+1$ in the $i+1$-th row. Thus, by
  \fullref{Lemma}{lem:efdefinedfortableau}, $\e_i(w)$ is not defined. Hence $w$ is highest-weight. (Note that it is
  still necessary to show that $w$ is the \emph{unique} highest-weight word in its connected component of $\Gamma_n$.)

  Let $u$ be a quasi-ribbon word such that $\qrtableau{u}$ has shape $\alpha$ but has the property that the $j$-th row
  does not consist entirely of symbols $j$, for at least one $j \in \set{1,\ldots,\clen\alpha}$. Thus there is some
  symbol $i$ that appears in the $h$-th row, where $h < i$. Without loss of generality, assume $i$ is minimal. Clearly
  $i > 1$ since $h \geq 1$. Consider two cases:
  \begin{itemize}
  \item The left-most symbol in the $h$-th row is $i$. Then, by the minimality of $i$, the symbol $i-1$ cannot appear in
    $u$, for it would have to appear on the $h-1$-row of $\qrtableau{u}$. Thus $u$ does not contain a symbol $i-1$;
    hence $\e_{i-1}(u)$ is defined.
  \item The left-most symbol in the $h$-th row is not $i$. Then every symbol $i-1$ in $\qrtableau{u}$ must appear in a
    column strictly to the left of the left-most symbol $i$. Hence, by \fullref{Lemma}{lem:efdefinedfortableau},
    $\e_{i-1}(u)$ is defined.
  \end{itemize}
  Thus some quasi-Kashiwara operator $\e_i$ can be applied to $u$ and so $u$ is not a highest-weight word.

  This implies that $w$ is the unique highest-weight quasi-ribbon word that has shape $\alpha$. Furthermore, by
  \fullref{Corollary}{corol:qrwsameshapecomponent}, applying $\e_i$ raises the weight of a word but maintains the
  property of being a quasi-ribbon word and the shape $\alpha$ of its corresponding quasi-ribbon tableau. Therefore some
  sequence of operators $\e_i$ must transform the word $u$ to the word $w$. Thus the set of quasi-ribbon words whose
  corresponding quasi-ribbon tableau have shape $\alpha$ forms a connected component.
\end{proof}

\begin{figure}[t]
  \centering
  \includegraphics{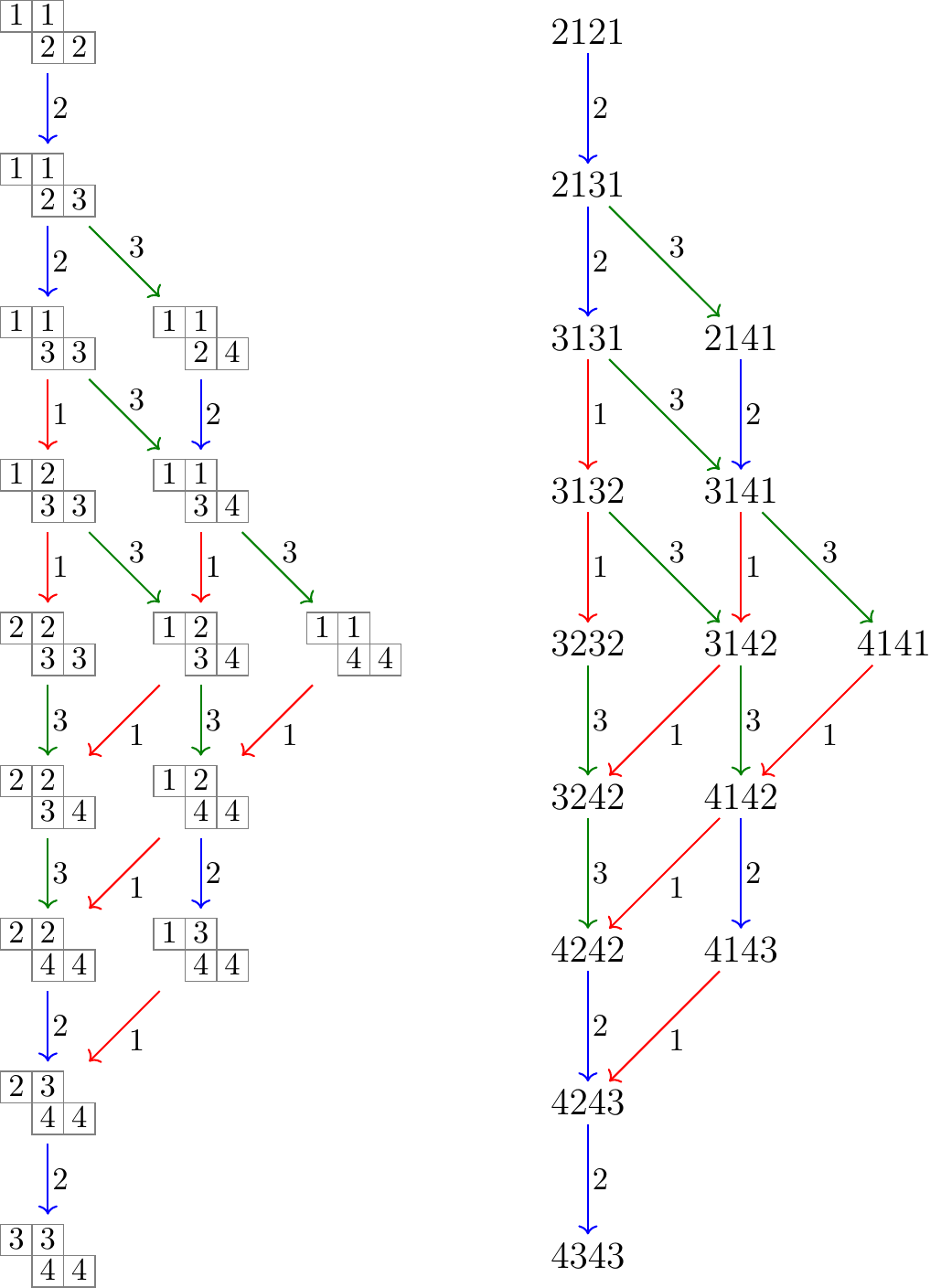}
  \caption{The isomorphic components $\Gamma_4(1212)$ (left) and $\Gamma_4(2121)$ (right) of the quasi-crystal graph
    $\Gamma_4$, with elements of $\Gamma_4(1212)$ drawn as quasi-ribbon tableau instead of written as words. The component
    $\Gamma_4(1212)$ consists of all quasi-ribbon words whose quasi-ribbon tableaux have shape $(2,2)$. None of the
    words in $\Gamma_4(2112)$ is a quasi-ribbon word.}
  \label{fig:gamma4tableaux}
\end{figure}

Thus, for example, the highest-weight quasi-ribbon word corresponding to a quasi-ribbon tableau of shape $(3,1,5,2)$ is
$11321333434$, corresponding to the quasi-ribbon tableau
\[
\tikz[tableau]\matrix
{
 1 \& 1 \& 1                          \\
   \&   \& 2                          \\
   \&   \& 3 \& 3 \& 3 \& 3 \& 3      \\
   \&   \&   \&   \&   \&   \& 4 \& 4 \\
};
\]
See also \fullref{Figures}{fig:gamma4} and \ref{fig:gamma4tableaux}, where the connected component $\Gamma_4(1212)$,
shown on the left, consists entirely of quasi-ribbon words corresponding to quasi-ribbon tableaux of shape $(2,2)$, and
its unique highest-weight word is $1212$.

Note that \fullref[(2)]{Proposition}{prop:charhighestweighttableau} only establishes the existence of unique
highest-weight words in connected components consisting of quasi-ribbon words, and not in every component.

\begin{corollary}
  \label{corol:charhighestweighttableaubyshapes}
  Let $\alpha$ be a composition, and let $w$ be a quasi-ribbon word such that $\qrtableau{w}$ has shape $\alpha$. Then
  $w$ is a highest-weight word if and only if $\wt{w} = \alpha$.
\end{corollary}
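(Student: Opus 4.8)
The plan is to derive both directions of the equivalence directly from \fullref{Proposition}{prop:charhighestweighttableau}, which has already pinned down the unique highest-weight word of the relevant component, together with the fact (recorded just before the definition of $\Gamma_n$) that each raising operator $\e_i$ strictly increases weight with respect to the order \eqref{eq:weightorder}.

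First I would record the weight of the distinguished word: let $w_0$ be the quasi-ribbon word such that $\qrtableau{w_0}$ has shape $\alpha$ with its $j$-th row filled entirely by symbols $j$, for $j = 1,\ldots,\clen\alpha$. Since the $j$-th row of $\qrtableau{w_0}$ has $\alpha_j$ cells, the word $w_0$ contains exactly $\alpha_j$ copies of $j$ and no larger symbol, so $\wt{w_0} = \alpha$. For the forward implication, suppose $w$ is highest-weight. Since $w$ is a quasi-ribbon word with $\qrtableau{w}$ of shape $\alpha$, \fullref[(1)]{Proposition}{prop:charhighestweighttableau} places $w$ in the connected component of $\Gamma_n$ consisting of all quasi-ribbon words whose tableau has shape $\alpha$, and \fullref[(2)]{Proposition}{prop:charhighestweighttableau} says this component has $w_0$ as its \emph{unique} highest-weight word; hence $w = w_0$ and $\wt{w} = \alpha$.

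For the converse, suppose $\wt{w} = \alpha$. As above, $w$ lies in the component of \fullref[(1)]{Proposition}{prop:charhighestweighttableau}, and by \fullref{Corollary}{corol:qrwsameshapecomponent} every word in it is a quasi-ribbon word whose tableau has shape $\alpha$. Applying raising operators $\e_i$ to $w$ as long as possible terminates (the component is finite) at a word on which no $\e_i$ is defined, which by \fullref[(2)]{Proposition}{prop:charhighestweighttableau} is $w_0$. If even one $\e_i$ had been applied along the way, then $\wt{w} < \wt{w_0} = \alpha$ by the strict monotonicity of the raising operators, contradicting $\wt{w} = \alpha$; hence $w = w_0$, which is highest-weight.

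There is no real obstacle here: the argument is essentially formal once \fullref{Proposition}{prop:charhighestweighttableau} is in hand. The only points demanding care are that ``highest-weight'' must be understood as ``no quasi-Kashiwara raising operator is defined on the word'', so that the uniqueness clause of \fullref{Proposition}{prop:charhighestweighttableau} can be invoked, and that it is the \emph{strict} weight increase under $\e_i$ — not merely a non-strict one — that converts the weight hypothesis into the identification $w = w_0$. No further combinatorics of quasi-ribbon tableaux is needed.
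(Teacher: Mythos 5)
Your proof is correct, and the forward direction coincides with the paper's (read off the shape of the unique highest-weight tableau from Proposition~\ref{prop:charhighestweighttableau}(2) and count symbols). The converse, however, takes a genuinely different route. The paper argues purely combinatorially on the tableau: since a symbol $j$ can only occupy rows $1$ through $j$ of a quasi-ribbon tableau, the hypothesis $\wt{w}=\alpha$ forces, row by row, the $j$-th row to be filled with symbols $j$, whence $w$ is the distinguished word of Proposition~\ref{prop:charhighestweighttableau}(2). You instead exploit the quasi-crystal machinery: raise $w$ to the terminal word of its component (termination needs the strict weight increase under $\e_i$ as well as finiteness, to rule out cycling --- you do invoke strictness, so this is fine), identify that terminal word as $w_0$ via uniqueness, and observe that any nontrivial raising would force $\wt{w}<\wt{w_0}=\alpha$, since each application of $\e_i$ genuinely changes the weight. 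Both arguments are sound and of comparable length; the paper's is more elementary and self-contained (no appeal to connectivity, termination, or monotonicity), while yours reuses the already-established structure of $\Gamma_n$ and avoids redoing any tableau combinatorics. Your closing caveat --- that ``highest-weight'' must here be read as ``no $\e_i$ is defined'' --- is well taken, since that is exactly the sense in which Proposition~\ref{prop:charhighestweighttableau} establishes uniqueness.
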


\begin{proof}
  Suppose $w$ is a highest-weight word. By \fullref[(2)]{Proposition}{prop:charhighestweighttableau}, the $j$-th row of
  $\qrtableau{w}$ consists entirely of symbols $j$, for $j=1,\ldots,\clen\alpha$. Thus $w$ contains exactly $\alpha_j$
  symbols $j$ for each $j$, and so $\wt{w} = \alpha$.

  Now suppose that $\wt{w} = \alpha$. In a quasi-ribbon tableau, a symbol $j$ can only appear in rows $1$ to $j$. Hence
  the $\alpha_1$ symbols $1$ in $\qrtableau{w}$ must appear in row $1$, which has length $\alpha_1$, so this row is full
  of symbols $1$. The $\alpha_2$ symbols $2$ must appear in rows $1$ and $2$, but row $1$ is full of symbols $1$, so row
  $2$, which has length $\alpha_2$, is full of symbols $2$. Continuing in this way, row $j$, which has length
  $\alpha_j$, is full of symbols $j$. By \fullref[(2)]{Proposition}{prop:charhighestweighttableau}, $w$ is a
  highest-weight word.
\end{proof}

\begin{corollary}
  \label{corol:qrthighestweightsameweightequal}
  Let $u,v \in \aA_n^*$ be highest-weight quasi-ribbon words. If $\wt{u} = \wt{v}$, then $u=v$.
\end{corollary}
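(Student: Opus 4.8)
The plan is to deduce this quickly from \fullref{Corollary}{corol:charhighestweighttableaubyshapes} together with \fullref{Proposition}{prop:charhighestweighttableau}. First I would let $\alpha$ be the shape of $\qrtableau{u}$ and $\beta$ the shape of $\qrtableau{v}$. Since $u$ is a highest-weight quasi-ribbon word, \fullref{Corollary}{corol:charhighestweighttableaubyshapes} gives $\wt{u} = \alpha$; likewise $\wt{v} = \beta$. The hypothesis $\wt{u} = \wt{v}$ then forces $\alpha = \beta$, so $\qrtableau{u}$ and $\qrtableau{v}$ have the same shape.

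Next I would invoke \fullref[(1)]{Proposition}{prop:charhighestweighttableau}: the quasi-ribbon words whose quasi-ribbon tableaux have shape $\alpha$ form a single connected component of $\Gamma_n$, so $u$ and $v$ lie in this common component. By \fullref[(2)]{Proposition}{prop:charhighestweighttableau}, that component contains a \emph{unique} highest-weight word. Since both $u$ and $v$ are highest-weight words lying in it, uniqueness yields $u = v$, which is the claim.

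The argument is essentially immediate once the preceding results are in hand, so there is no real obstacle; the only point requiring a moment's care is making sure the two tableaux are seen to have the same shape before appealing to the single-connected-component statement — this is exactly where \fullref{Corollary}{corol:charhighestweighttableaubyshapes} is used, converting the weight hypothesis into a shape statement.
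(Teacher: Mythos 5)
Your proof is correct and follows essentially the same route as the paper: both use \fullref{Corollary}{corol:charhighestweighttableaubyshapes} to turn the weight equality into a shape equality, and then conclude via the uniqueness statement of \fullref[(2)]{Proposition}{prop:charhighestweighttableau}. Your explicit appeal to part~(1) to place $u$ and $v$ in a common connected component simply makes explicit a step the paper leaves implicit.
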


\begin{proof}
  By \fullref{Corollary}{corol:charhighestweighttableaubyshapes}, since $u$ and $v$ are highest-weight, $\qrtableau{u}$
  has shape $\wt{u}$ and $\qrtableau{v}$ has shape $\wt{v}$. Since these weights are equal, $\qrtableau{u}$ and
  $\qrtableau{v}$ have the same shape. By the characterization of highest-weight words in
  \fullref[(2)]{Proposition}{prop:charhighestweighttableau}, it follows that $u = v$.
\end{proof}

\begin{proposition}
  \label{prop:uniqueqrtinsimclass}
  There is at most one quasi-ribbon word in each $\sim$-class.
\end{proposition}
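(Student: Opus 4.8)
The plan is to show that any two quasi-ribbon words $u, v$ satisfying $u \sim v$ must coincide (which is exactly the claim), by pushing the chosen quasi-crystal isomorphism up to the highest-weight words of the two components and invoking the rigidity of \fullref{Proposition}{prop:charhighestweighttableau}. First I would fix a quasi-crystal isomorphism $\theta : \Gamma_n(u) \to \Gamma_n(v)$ with $\theta(u) = v$, which exists since $u \sim v$. By \fullref{Corollary}{corol:qrwsameshapecomponent}, every word in $\Gamma_n(u)$ is a quasi-ribbon word and all the associated tableaux have the shape $\alpha$ of $\qrtableau{u}$; together with \fullref[(1)]{Proposition}{prop:charhighestweighttableau} this identifies $\Gamma_n(u)$ with the connected component consisting of all quasi-ribbon words whose quasi-ribbon tableau has shape $\alpha$, and similarly $\Gamma_n(v)$ with the component of all quasi-ribbon words of the shape $\beta$ of $\qrtableau{v}$.

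Next I would pin down $\theta$ on the highest-weight vertices. By \fullref[(2)]{Proposition}{prop:charhighestweighttableau} each of $\Gamma_n(u)$ and $\Gamma_n(v)$ contains a unique highest-weight word, say $p$ and $q$; moreover the proof of that proposition shows that $p$ (resp.\ $q$) is the unique word of its component on which no raising operator $\e_i$ is defined, equivalently the unique vertex with no incoming edge. Since a quasi-crystal isomorphism is a labelled digraph isomorphism it carries the unique incoming-edge-free vertex of $\Gamma_n(u)$ to the unique incoming-edge-free vertex of $\Gamma_n(v)$, so $\theta(p) = q$; and since $\theta$ preserves weight, $\wt{p} = \wt{q}$. \fullref{Corollary}{corol:qrthighestweightsameweightequal} then gives $p = q$, so in particular $\alpha = \beta$, $\Gamma_n(u) = \Gamma_n(v)$, and $\theta$ is an automorphism fixing $p$.

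Finally I would transport this back to $u$. By the proof of \fullref{Proposition}{prop:charhighestweighttableau} some sequence of raising operators takes $u$ to $p$, so by \fullref{Lemma}{lem:efinverse} there is a sequence of lowering operators with $u = \f_{i_1}\cdots\f_{i_s}(p)$. As $\theta$ preserves and reflects labelled edges, it commutes with each $\f_i$ wherever $\f_i$ is defined, whence
\[
v = \theta(u) = \theta\bigl(\f_{i_1}\cdots\f_{i_s}(p)\bigr) = \f_{i_1}\cdots\f_{i_s}\bigl(\theta(p)\bigr) = \f_{i_1}\cdots\f_{i_s}(p) = u,
\]
as required.

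The step that needs the most care, and which I regard as the crux, is the middle one: making precise that the abstract digraph isomorphism $\theta$ is forced to send the highest-weight word of $\Gamma_n(u)$ to that of $\Gamma_n(v)$. The clean way is the one indicated above --- identify the highest-weight word with the unique incoming-edge-free vertex, using the proof of \fullref{Proposition}{prop:charhighestweighttableau} --- but one could equally argue that $\theta$ preserves weight and reflects edges, hence reflects definedness of the $\e_i$, hence maps a highest-weight word to a highest-weight word. Once $\theta(p) = q$ is in hand, everything else is routine.
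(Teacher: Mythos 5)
Your proof is correct and takes essentially the same route as the paper's: both arguments push up to the unique highest-weight words of the two components, identify those words using weight-preservation together with \fullref{Corollary}{corol:qrthighestweightsameweightequal}, and then descend by the inverse lowering operators. Your middle step, pinning down why the isomorphism must match up the highest-weight vertices (as the unique incoming-edge-free vertices), is merely a more explicit rendering of the paper's remark that $u$ and $v$ sit ``at the same location'' in isomorphic components.
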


\begin{proof}
  Suppose $u,v \in \aA_n^*$ are quasi-ribbon words with $u \sim v$. By
  \fullref{Proposition}{prop:charhighestweighttableau}, there is a sequence of operators $\e_{i_1},\ldots,\e_{i_k}$ such
  that $u' = \e_{i_1}\cdots\e_{i_k}(u)$ is highest-weight. Since $u \sim v$ implies that the words $u$ and $v$ are at
  the same location in the isomorphic components $\Gamma_n(u)$ and $\Gamma_n(v)$, the word
  $v' = \e_{i_1}\cdots\e_{i_k}(v)$ is also highest-weight.  The isomorphism from $\Gamma_n(u)$ to $\Gamma_n(v)$
  preserves weight, and so $\wt{u} = \wt{v}$. Thus $u=v$ by \fullref{Corollary}{corol:qrthighestweightsameweightequal}.
\end{proof}

\fullref{Proposition}{prop:uniqueqrtinsimclass} shows that each element of $H_n = \aA_n^*/{\sim}$ has at most one
representative as a quasi-ribbon word. The aim is now to show that if two words are $\hypocong$-related, then they are
$\sim$-related, which will establish a one-to-one correspondence between the elements of $H_n$ and quasi-ribbon
tableaux:

\begin{lemma}
  \label{lem:hypocongimpliessameedges}
  Let $u \in \aA_n^*$. Then $u$ and $\colreading{\qrtableau{u}}$ have exactly the same labelled edges incident to them
  in $\Gamma_n$.
\end{lemma}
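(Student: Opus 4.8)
The plan is to show that for each $i \in \set{1,\ldots,n-1}$, the operators $\e_i$ and $\f_i$ are defined on $u$ if and only if they are defined on $w := \colreading{\qrtableau{u}}$, and that in each case the resulting words have the same quasi-ribbon tableau (equivalently, are $\hypocong$-related, so that applying the same operator gives words with the same reading). Since $u$ and $w$ satisfy $\qrtableau{u} = \qrtableau{w}$, they have the same number of each symbol, hence the same weight; the real content is the interaction with $i$-inversions and with the positions of $i$ and $i+1$.

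First I would recall that, by the definition of the column reading of a quasi-ribbon tabloid (columns read bottom-to-top, left-to-right) and the fact that rows of a quasi-ribbon tableau are non-decreasing, a quasi-ribbon word $v$ has an $i$-inversion if and only if $i+1$ and $i$ occur in the same column of $\qrtableau{v}$; this is exactly the observation used to prove \fullref{Lemma}{lem:efdefinedfortableau}. For a general word $u$, however, $u$ need not be $i$-inversion-free even when $\colreading{\qrtableau{u}}$ is. So the key step is to establish the equivalence: \emph{$u$ has an $i$-inversion if and only if $\qrtableau{u}$ contains $i+1$ below $i$ in the same column.} I would prove this using the insertion \fullref{Algorithm}{alg:hypoplacticinsertionone}: track, as symbols are inserted, whether any symbol $i+1$ ever ends up strictly below a symbol $i$ in the same column. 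One direction is essentially \fullref{Lemma}{lem:efdefinedfortableau} applied to $w$, together with $\qrtableau{w}=\qrtableau{u}$. For the converse, suppose $u$ has an $i$-inversion, i.e.\ some $i+1$ precedes some $i$ in $u$; I would argue that after inserting all of $u$, the insertion algorithm forces an $i+1$ into a row above some $i$ in a shared column — the point being that the $i+1$ is inserted into (or bumped down to) a row from which it cannot later be displaced below an $i$, because $i+1 < $ nothing relevant is inserted afterwards that would push it past the $i$. Conversely if $u$ is $i$-inversion-free, every $i$ precedes every $i+1$, and one checks the algorithm never places $i+1$ below $i$ in the same column.

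Granting that equivalence, the lemma follows quickly. If $\e_i(u)$ is defined, then $u$ is $i$-inversion-free with some $i+1$, so by the equivalence $\qrtableau{u}$ contains some $i+1$ but not $i+1$ below $i$ in a column; since $\qrtableau{w}=\qrtableau{u}$, \fullref{Lemma}{lem:efdefinedfortableau} gives that $\e_i(w)$ is defined (and $w$ is a quasi-ribbon word). For the reverse implication use the equivalence in the other direction together with \fullref{Lemma}{lem:efdefinedfortableau}. To see that the \emph{target} of the edge is the same, note $\e_i(u)$ and $\e_i(w)$ are obtained by changing the leftmost $i+1$ to $i$; by \fullref{Proposition}{prop:eipreservesstd}, $\std{\e_i(u)} = \std{u}$ and $\std{\e_i(w)} = \std{w}$, and since $\wt{\e_i(u)} = \wt{\e_i(w)}$ and $\std{u} = \std{w}$ (the latter because $\qrtableau{u}=\qrtableau{w}$ implies $u \hypocong w$, hence equal standardizations), the characterization \eqref{eq:charhypocong} gives $\e_i(u) \hypocong \e_i(w)$, i.e.\ $\qrtableau{\e_i(u)} = \qrtableau{\e_i(w)}$, so $\colreading{\qrtableau{\e_i(u)}} = \e_i(w)$ is indeed the target of the $i$-edge out of $w$ (after noting $\e_i(w)$ is a quasi-ribbon word by \fullref{Corollary}{corol:qrwsameshapecomponent}). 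The argument for $\f_i$ is symmetric. The main obstacle I anticipate is the careful bookkeeping in proving the $i$-inversion equivalence directly from the insertion algorithm — keeping track through bumps of the relative vertical positions of symbols $i$ and $i+1$ — rather than the formal deduction of the lemma from it.
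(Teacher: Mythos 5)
Your proposal is correct and takes essentially the same route as the paper: reduce everything to the equivalence between $i$-inversion-freeness of $u$ and the absence of an $i$-above-$(i+1)$ column configuration in $\qrtableau{u}$, established by tracking the insertion algorithm, and then invoke \fullref{Lemma}{lem:efdefinedfortableau}. The extra paragraph identifying the targets of the edges is not needed for this statement (which only concerns which edge labels occur at the two vertices); that is precisely the content of the paper's subsequent \fullref{Lemma}{lem:eipreserveshypocong}.
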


\begin{proof}
  The aim is to prove that $\e_i(u)$ is defined if and only if $\e_i(\colreading{\qrtableau{u}})$ is defined. Similar
  reasoning shows that $\f_i(u)$ is defined if and only if $\f_i(\colreading{\qrtableau{u}})$ is defined.

  First, note that since $u \hypocong \colreading{\qrtableau{u}}$, it follows from \eqref{eq:charhypocong} that
  $\wt{u} = \wt{\colreading{\qrtableau{u}}}$.  Suppose $\e_i(u)$ is defined. Then $u$ contains a symbol $i+1$ but is
  $i$-inversion-free. Hence every symbol $i$ is to the left of every symbol $i+1$ in $u$. Therefore
  \fullref{Algorithm}{alg:hypoplacticinsert} appends the symbols $i+1$ to the right of any symbols $i$. Hence
  by~\fullref{Lemma}{lem:efdefinedfortableau}, $\e_i(\colreading{\qrtableau{u}})$ is defined. On the other hand, suppose
  that $\e_i(\colreading{\qrtableau{u}})$ is defined. Then by~\fullref{Lemma}{lem:efdefinedfortableau},
  $\colreading{\qrtableau{u}}$ contains a symbol $i+1$, but no symbol $i$ immediately above $i+1$ in a column. Hence the
  computation of $\qrtableau{u}$ using \fullref{Algorithm}{alg:hypoplacticinsert} cannot involve inserting a symbol $i$
  later than a symbol $i+1$. That is, $u$ is $i$-inversion-free. Since $\qrtableau{u}$ contains a symbol $i+1$, so does
  $u$, and thus $\e_i(u)$ is defined.
\end{proof}

\begin{lemma}
  \label{lem:eipreserveshypocong}
  Let $u \in \aA_n^*$ and $i \in \set{1,\ldots,n-1}$.
  \begin{enumerate}
  \item Suppose that $\e_i(u)$ is defined. Then $\e_i(u) \hypocong \e_i(\colreading{\qrtableau{u}})$ and so
    $\e_i(\colreading{\qrtableau{u}}) = \colreading{\qrtableau{\e_i(u)}}$.
  \item Suppose that $\f_i(u)$ is defined. Then $\f_i(u) \hypocong \f_i(\colreading{\qrtableau{u}})$ and so
    $\f_i(\colreading{\qrtableau{u}}) = \colreading{\qrtableau{\f_i(u)}}$.
  \end{enumerate}
\end{lemma}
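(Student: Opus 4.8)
The plan is to prove part~(1) by verifying the two numerical conditions in the characterization \eqref{eq:charhypocong} of $\hypocong$ for the words $\e_i(u)$ and $\e_i(\colreading{\qrtableau{u}})$, and then to extract the ``and so'' clause from the fact that $\e_i(\colreading{\qrtableau{u}})$ is again a quasi-ribbon word; part~(2) will follow by the symmetric argument for $\f_i$. First I would note that, since $\e_i(u)$ is defined, \fullref{Lemma}{lem:hypocongimpliessameedges} ensures $\e_i(\colreading{\qrtableau{u}})$ is also defined, so both sides of the asserted relation make sense. Recall too that $u \hypocong \colreading{\qrtableau{u}}$ (as already used in the proof of \fullref{Lemma}{lem:hypocongimpliessameedges}), so \eqref{eq:charhypocong} gives $\wt{u} = \wt{\colreading{\qrtableau{u}}}$ and $\descomp{\std{u}^{-1}} = \descomp{\std{\colreading{\qrtableau{u}}}^{-1}}$. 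Applying $\e_i$ to each word, \fullref{Proposition}{prop:eipreservesstd} gives $\std{\e_i(u)} = \std{u}$ and $\std{\e_i(\colreading{\qrtableau{u}})} = \std{\colreading{\qrtableau{u}}}$, so the descent-composition condition for $\e_i(u) \hypocong \e_i(\colreading{\qrtableau{u}})$ is inherited from the one for $u \hypocong \colreading{\qrtableau{u}}$. For the weight condition, $\e_i$ replaces one symbol $i+1$ by $i$ in each of the two words, so both $\wt{\e_i(u)}$ and $\wt{\e_i(\colreading{\qrtableau{u}})}$ arise from the equal weights $\wt{u}$ and $\wt{\colreading{\qrtableau{u}}}$ by the same modification (decrement the $(i+1)$-th component, increment the $i$-th), hence remain equal. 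By \eqref{eq:charhypocong}, $\e_i(u) \hypocong \e_i(\colreading{\qrtableau{u}})$, and therefore $\qrtableau{\e_i(u)} = \qrtableau{\e_i(\colreading{\qrtableau{u}})}$.

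To turn this equality of quasi-ribbon tableaux into the stated identity, I would use that $\colreading{\qrtableau{u}}$ is a quasi-ribbon word, so that by \fullref{Corollary}{corol:qrwsameshapecomponent} every word in $\Gamma_n(\colreading{\qrtableau{u}})$ is a quasi-ribbon word; in particular $\e_i(\colreading{\qrtableau{u}})$, which lies in that component, is a quasi-ribbon word. Since a quasi-ribbon word $w$ satisfies $w = \colreading{\qrtableau{w}}$, reading off both sides of $\qrtableau{\e_i(u)} = \qrtableau{\e_i(\colreading{\qrtableau{u}})}$ gives $\colreading{\qrtableau{\e_i(u)}} = \colreading{\qrtableau{\e_i(\colreading{\qrtableau{u}})}} = \e_i(\colreading{\qrtableau{u}})$, as required. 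Part~(2) is proved identically, using the $\f_i$ halves of \fullref{Lemma}{lem:hypocongimpliessameedges} and \fullref{Proposition}{prop:eipreservesstd}, and noting that $\f_i$ replaces one symbol $i$ by $i+1$.

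The argument is essentially a bookkeeping exercise given the earlier results; the only step needing a little attention is the final passage from tableaux back to words, which relies on knowing \emph{in advance} --- via \fullref{Corollary}{corol:qrwsameshapecomponent} --- that $\e_i(\colreading{\qrtableau{u}})$ is a quasi-ribbon word, so that the normal-form identity $w = \colreading{\qrtableau{w}}$ may legitimately be applied to it.
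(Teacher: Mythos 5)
Your proposal is correct and follows essentially the same route as the paper's own proof: definedness via Lemma~\ref{lem:hypocongimpliessameedges}, the two conditions of \eqref{eq:charhypocong} checked via Proposition~\ref{prop:eipreservesstd} and the one-symbol weight change, and the final identification using Corollary~\ref{corol:qrwsameshapecomponent} and the normal-form property of quasi-ribbon words. The only difference is that you spell out the last step (passing from equality of tableaux back to the word identity) in slightly more detail than the paper does.
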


\begin{proof}
  Suppose that $\e_i(u)$ is defined. Then it follows from \fullref{Lemma}{lem:hypocongimpliessameedges} that
  $\e_i(\colreading{\qrtableau{u}})$ is also defined. Now, $u \hypocong \colreading{\qrtableau{u}}$ by the definition of
  $\hypocong$. Thus it follows from \eqref{eq:charhypocong} that $\wt{u} = \wt{\colreading{\qrtableau{u}}}$ and so
  $\wt{\e_i(u)} = \wt{\e_i(\colreading{\qrtableau{u}})}$ since $\e_i$ replaces one symbol $i+1$ by a symbol $i$ in both
  words. Further, it again follows from \eqref{eq:charhypocong} that
  $\descomp{\std{u}^{-1}} = \descomp{\std{\colreading{\qrtableau{u}}}^{-1}}$. Hence, since $\e_i$ preserves
  standardizations by \fullref{Proposition}{prop:eipreservesstd}, it follows that
  $\descomp{\std{\e_i(u)}^{-1}} = \descomp{\std{\e_i(\colreading{\qrtableau{u}})}^{-1}}$. Combining this with the
  equality of weights and using \eqref{eq:charhypocong} again shows that $\e_i(u) \hypocong \e_i(\colreading{\qrtableau{u}})$.

  By \fullref{Corollary}{corol:qrwsameshapecomponent}, $\e_i(\colreading{\qrtableau{u}})$ is a quasi-ribbon word. Since
  it is $\hypocong$-related to $\e_i(u)$, it follows that
  $\e_i(\colreading{\qrtableau{u}}) = \colreading{\qrtableau{\e_i(u)}}$. This completes the proof of part~(1); similar
  reasoning proves part~(2).
\end{proof}

\begin{proposition}
  \label{prop:hypocongimpliessim}
  Let $u,v \in \aA_n^*$. Then $u \hypocong v \implies u \sim v$.
\end{proposition}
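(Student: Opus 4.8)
The plan is to reduce to the assertion that every word $x \in \aA_n^*$ satisfies $x \sim \colreading{\qrtableau{x}}$, and then to build the required quasi-crystal isomorphism by hand. The reduction is easy: by definition $u \hypocong v$ holds exactly when $\qrtableau{u} = \qrtableau{v}$, hence exactly when $\colreading{\qrtableau{u}} = \colreading{\qrtableau{v}}$, so once $x \sim \colreading{\qrtableau{x}}$ is known for all $x$ one gets $u \sim \colreading{\qrtableau{u}} = \colreading{\qrtableau{v}} \sim v$ using that $\sim$ is an equivalence relation (\fullref{Proposition}{prop:isomdefinescong}).

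So fix $u$, put $w = \colreading{\qrtableau{u}}$ (a quasi-ribbon word), and define $\theta : \Gamma_n(u) \to \aA_n^*$ by $\theta(x) = \colreading{\qrtableau{x}}$, noting $\theta(u) = w$. The key step is to prove, by induction on $r$, that if $x = \g_{i_1}\cdots\g_{i_r}(u)$ with each $\g_{i_t} \in \gset[\big]{\e_i,\f_i}{i \in \set{1,\ldots,n-1}}$, then the operator word $\g_{i_1}\cdots\g_{i_r}$ is defined on $w$ and $\theta(x) = \g_{i_1}\cdots\g_{i_r}(w)$; at each step \fullref{Lemma}{lem:hypocongimpliessameedges} keeps the operator in play on the quasi-ribbon reading and \fullref{Lemma}{lem:eipreserveshypocong} pushes it through $\colreading{\qrtableau{{\cdot}}}$. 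This identity shows immediately that $\theta$ maps $\Gamma_n(u)$ into $\Gamma_n(w)$ and preserves labelled edges. Surjectivity onto $\Gamma_n(w)$ is the dual statement: given $y = \g_{i_1}\cdots\g_{i_r}(w) \in \Gamma_n(w)$, a symmetric induction via \fullref{Lemma}{lem:hypocongimpliessameedges} — at each stage the current word of $\Gamma_n(u)$ and its quasi-ribbon reading carry the same incident edges — shows $\g_{i_1}\cdots\g_{i_r}$ is defined on $u$, and then the identity gives $\theta\bigl(\g_{i_1}\cdots\g_{i_r}(u)\bigr) = y$.

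It remains to check that $\theta$ is weight-preserving and injective (edge-reflection then follows: if $y' = \f_i(\theta(x))$, then $\f_i(x)$ is defined by \fullref{Lemma}{lem:hypocongimpliessameedges} and $\theta(\f_i(x)) = \f_i(\theta(x)) = y'$ by \fullref{Lemma}{lem:eipreserveshypocong}, so $\theta$ carries the edge $x \to \f_i(x)$ to $\theta(x) \to y'$, and similarly for $\e_i$). Weight-preservation is immediate from $x \hypocong \colreading{\qrtableau{x}}$ and the characterization \eqref{eq:charhypocong}. For injectivity I would use that a word over $\aA_n$ is recovered from the pair consisting of its standardization and its weight — the occurrences of each symbol $a$ in $x$ are precisely the positions receiving, in $\std{x}$, the ranks in the block prescribed by $\wt{x}$ — so since by \fullref{Proposition}{prop:eipreservesstd} every word of $\Gamma_n(u)$ has standardization $\std{u}$, and $\theta$ preserves weight, $\theta(x) = \theta(x')$ forces $\wt{x} = \wt{x'}$ and hence $x = x'$. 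Thus $\theta$ is a quasi-crystal isomorphism with $\theta(u) = w$, giving $u \sim \colreading{\qrtableau{u}}$, which completes the reduction. The substantive content is entirely carried by \fullref{Lemma}{lem:hypocongimpliessameedges} and \fullref{Lemma}{lem:eipreserveshypocong}; the only delicate part of the remaining argument is keeping the two nested inductions of the second paragraph from breaking down, which is handled uniformly by \fullref{Lemma}{lem:hypocongimpliessameedges} applied along the whole lifted path rather than just at $u$. I do not anticipate a real obstacle beyond this bookkeeping.
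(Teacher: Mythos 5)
Your proposal is correct and follows the paper's own route exactly: the paper also reduces to showing $u \sim \colreading{\qrtableau{u}}$ via the map $w \mapsto \colreading{\qrtableau{w}}$, citing \fullref{Lemmata}{lem:hypocongimpliessameedges} and \ref{lem:eipreserveshypocong}, and then concludes by transitivity of $\sim$. The only difference is that the paper states the isomorphism claim in one line, whereas you spell out the verification (the two inductions, weight-preservation via \eqref{eq:charhypocong}, and injectivity from standardization plus weight), all of which is sound.
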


\begin{proof}
  By \fullref{Lemmata}{lem:hypocongimpliessameedges} and \ref{lem:eipreserveshypocong}, the map
  $\Theta : \Gamma_n(u) \to \Gamma_n(\colreading{\qrtableau{u}})$ with $\Theta(w) = \colreading{\qrtableau{w}}$ (for
  $w \in \Gamma_n(u)$) is a quasi-crystal isomorphism, and so $u \sim \colreading{\qrtableau{u}}$. Similarly,
  $v \sim \colreading{\qrtableau{v}}$.  It follows from $u \hypocong v$ that $\qrtableau{u} = \qrtableau{v}$. Since
  $\sim$ is transitive, $u \sim v$.
\end{proof}

Let $u \in \aA_n^*$. By \fullref{Proposition}{prop:hypocongimpliessim}, $\qrtableau{u}$ is a quasi-ribbon word that is
$\sim$-related to $u$. By \fullref{Proposition}{prop:uniqueqrtinsimclass}, $\qrtableau{u}$ is the \emph{unique}
quasi-ribbon word that is $\sim$-related to $u$. Thus the following result has been proven:

\begin{theorem}
Let $u,v \in \aA_n^*$. Then $u \hypocong v \iff u \sim v$.
\end{theorem}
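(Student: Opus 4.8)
The plan is to deduce the theorem directly from \fullref{Proposition}{prop:hypocongimpliessim} and \fullref{Proposition}{prop:uniqueqrtinsimclass}, which between them carry essentially all of the weight. The forward implication $u \hypocong v \implies u \sim v$ is precisely the content of \fullref{Proposition}{prop:hypocongimpliessim}, so nothing further is required in that direction.

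For the reverse implication, I would first record the observation already contained in the proof of \fullref{Proposition}{prop:hypocongimpliessim}: for every $w \in \aA_n^*$ the word $\colreading{\qrtableau{w}}$ is a quasi-ribbon word with $\qrtableau{\colreading{\qrtableau{w}}} = \qrtableau{w}$, so $w \hypocong \colreading{\qrtableau{w}}$ by the definition of $\hypocong$ via \fullref{Algorithm}{alg:hypoplacticinsert} (approach~H2), and hence $w \sim \colreading{\qrtableau{w}}$ by \fullref{Proposition}{prop:hypocongimpliessim}. Thus every $\sim$-class contains a quasi-ribbon word, namely the reading of the quasi-ribbon tableau of any of its members. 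Now suppose $u \sim v$. Since $\sim$ is an equivalence relation (indeed a congruence, by \fullref{Proposition}{prop:isomdefinescong}), we get $\colreading{\qrtableau{u}} \sim u \sim v \sim \colreading{\qrtableau{v}}$, so $\colreading{\qrtableau{u}}$ and $\colreading{\qrtableau{v}}$ are two quasi-ribbon words in the same $\sim$-class; by \fullref{Proposition}{prop:uniqueqrtinsimclass} they must coincide. Because the reading map is injective on quasi-ribbon tableaux — a quasi-ribbon tableau $T$ is recovered from $\colreading{T}$ as $\qrtabloid{\colreading{T}}$ — this yields $\qrtableau{u} = \qrtableau{v}$, which is exactly $u \hypocong v$.

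At this level there is no real obstacle; the substance lies upstream. The genuinely delicate ingredients are \fullref{Proposition}{prop:uniqueqrtinsimclass}, which rests on the existence and uniqueness of highest-weight words in connected components consisting of quasi-ribbon words (\fullref{Proposition}{prop:charhighestweighttableau}) together with \fullref{Corollary}{corol:qrthighestweightsameweightequal}; and \fullref{Proposition}{prop:hypocongimpliessim}, which depends on \fullref{Lemma}{lem:hypocongimpliessameedges} and \fullref{Lemma}{lem:eipreserveshypocong} showing that the assignment $w \mapsto \colreading{\qrtableau{w}}$ respects the quasi-crystal structure (labelled edges and weights). Granting all of these, the theorem follows in a couple of lines as above.
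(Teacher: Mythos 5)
Your proof is correct and follows essentially the same route as the paper: the forward implication is Proposition~\ref{prop:hypocongimpliessim}, and the reverse implication is obtained by noting that $\colreading{\qrtableau{w}}$ is a quasi-ribbon word $\sim$-related to $w$ and invoking the uniqueness from Proposition~\ref{prop:uniqueqrtinsimclass}. No further comment is needed.
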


\begin{corollary}
$\hypo_n = \aA_n^*/{\hypocong} \simeq \aA_n^*/{\sim} = H_n$.
\end{corollary}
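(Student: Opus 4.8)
The plan is to prove the theorem by assembling the propositions already established in this section, since each direction of the biconditional is (or quickly follows from) one of them. The forward implication $u \hypocong v \implies u \sim v$ is precisely \fullref{Proposition}{prop:hypocongimpliessim}, so for that direction there is nothing further to do. The only real content is the converse, and the strategy there is to route the argument through the canonical quasi-ribbon word representatives of the two $\sim$-classes.

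In detail: for any $w \in \aA_n^*$, the word $\colreading{\qrtableau{w}}$ is a quasi-ribbon word with $\qrtableau{\colreading{\qrtableau{w}}} = \qrtableau{w}$, so by approach~H2 (the definition of $\hypocong$ via \fullref{Algorithm}{alg:hypoplacticinsert}) one has $w \hypocong \colreading{\qrtableau{w}}$, whence $w \sim \colreading{\qrtableau{w}}$ by \fullref{Proposition}{prop:hypocongimpliessim}. Now suppose $u \sim v$. Since $\sim$ is an equivalence relation (indeed a congruence, by \fullref{Proposition}{prop:isomdefinescong}), transitivity gives $\colreading{\qrtableau{u}} \sim \colreading{\qrtableau{v}}$; but both sides are quasi-ribbon words, so \fullref{Proposition}{prop:uniqueqrtinsimclass} forces $\colreading{\qrtableau{u}} = \colreading{\qrtableau{v}}$. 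A quasi-ribbon tableau is recovered from its column reading as $\qrtabloid{\cdot}$ of that reading, so $\qrtableau{u} = \qrtableau{v}$, which by approach~H2 means $u \hypocong v$. This closes the loop and proves the biconditional. The corollary is then immediate: $\hypo_n = \aA_n^*/{\hypocong}$ and $H_n = \aA_n^*/{\sim}$ are quotients of the single monoid $\aA_n^*$ by the same congruence, so the identity map on $\aA_n^*$ descends to an isomorphism $\hypo_n \simeq H_n$.

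The assembly above is genuinely routine; all of the substance has been front-loaded into the two propositions it cites, so the main obstacle is not here but in those inputs. \fullref{Proposition}{prop:hypocongimpliessim} is the more delicate, as it rests on showing that $w \mapsto \colreading{\qrtableau{w}}$ defines a quasi-crystal isomorphism on each connected component of $\Gamma_n$, which in turn needs the edge-comparison of \fullref{Lemma}{lem:hypocongimpliessameedges} together with the weight-and-standardization bookkeeping of \fullref{Lemma}{lem:eipreserveshypocong}. \fullref{Proposition}{prop:uniqueqrtinsimclass} leans on the existence and uniqueness of highest-weight words in components consisting of quasi-ribbon words (\fullref{Proposition}{prop:charhighestweighttableau}) and on the fact that such a word is determined by its weight (\fullref{Corollary}{corol:qrthighestweightsameweightequal}). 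In the proof of the theorem itself one simply invokes these results.
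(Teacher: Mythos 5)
Your proposal is correct and follows essentially the same route as the paper: the forward implication is Proposition~\ref{prop:hypocongimpliessim}, and the converse is obtained by passing to the quasi-ribbon word representatives $\colreading{\qrtableau{u}}$ and $\colreading{\qrtableau{v}}$, invoking transitivity of $\sim$ together with the uniqueness statement of Proposition~\ref{prop:uniqueqrtinsimclass} to force $\qrtableau{u} = \qrtableau{v}$. The corollary then follows, as you say, because the two congruences coincide.
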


In light of this, henceforth the quasi-crystal graph $\Gamma_n$ is denoted $\Gamma(\hypo_n)$, and the connected component
$\Gamma_n(w)$ is denoted $\Gamma(\hypo_n,w)$

Before moving on to study how the quasi-crystal graph interacts with the hypoplactic version of the Robinson--Schensted--Knuth
correspondence, it is necessary to prove one more fundamental property of the quasi-crystal
graph. \fullref[(2)]{Proposition}{prop:charhighestweighttableau} showed that the connected components comprising
quasi-ribbon words contain unique highest-weight words. The same holds for \emph{all} connected components:

\begin{proposition}
  In every connected component in $\Gamma(\hypo_n)$ there is a unique highest-weight word.
\end{proposition}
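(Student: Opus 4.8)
The plan is to bootstrap from \fullref[(2)]{Proposition}{prop:charhighestweighttableau}, which already proves the claim for connected components consisting of quasi-ribbon words, by exhibiting, for an arbitrary component, a quasi-crystal isomorphism onto one of that form.

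Concretely, fix $u \in \aA_n^*$. From the definition of $\hypocong$ one has $u \hypocong \colreading{\qrtableau{u}}$, hence $u \sim \colreading{\qrtableau{u}}$ by \fullref{Proposition}{prop:hypocongimpliessim}; unwinding the definition of $\sim$, there is a quasi-crystal isomorphism $\theta : \Gamma(\hypo_n, u) \to \Gamma(\hypo_n, \colreading{\qrtableau{u}})$ with $\theta(u) = \colreading{\qrtableau{u}}$. Now $\colreading{\qrtableau{u}}$ is a quasi-ribbon word, so by \fullref{Corollary}{corol:qrwsameshapecomponent} the target component consists of quasi-ribbon words, all of whose quasi-ribbon tableaux share the shape $\alpha$ of $\qrtableau{u}$; by \fullref[(1)]{Proposition}{prop:charhighestweighttableau} it is exactly the component of all quasi-ribbon words of shape $\alpha$, and so by part~(2) it contains a unique highest-weight word, say $w_\alpha$. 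The remaining step is to transport $w_\alpha$ back along $\theta$: a quasi-crystal isomorphism is, by definition, weight-preserving and a labelled digraph isomorphism, so it sends vertices with no incoming edge to vertices with no incoming edge and preserves all comparisons of weight; hence it restricts to a bijection between the highest-weight words of the two components. Since the codomain side of that bijection is the singleton $\{w_\alpha\}$, the domain side is $\{\theta^{-1}(w_\alpha)\}$, which is precisely the assertion for $\Gamma(\hypo_n, u)$. As $u$ was arbitrary, the proposition follows.

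I anticipate no real obstacle: the substantive content is already carried by \fullref{Proposition}{prop:charhighestweighttableau} and \fullref{Proposition}{prop:hypocongimpliessim}. The one point meriting a line of care is the invariance of the highest-weight property under a quasi-crystal isomorphism, but this is immediate from the two features built into that notion (preservation of weight and of labelled edges), so it is a remark rather than an argument.
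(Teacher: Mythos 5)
Your proposal is correct and follows essentially the same route as the paper: pass from $\Gamma(\hypo_n,u)$ to the isomorphic quasi-ribbon component $\Gamma(\hypo_n,\colreading{\qrtableau{u}})$ via $u \sim \colreading{\qrtableau{u}}$, invoke Proposition~\ref{prop:charhighestweighttableau}(2) there, and transport the unique highest-weight word back along the isomorphism. Your extra remark on why a weight-preserving labelled digraph isomorphism preserves the highest-weight property is a detail the paper leaves implicit, but the argument is the same.
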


\begin{proof}
  Let $u \in \aA_n^*$. Since $u \sim \colreading{\qrtableau{u}}$, the connected component $\Gamma(\hypo_n,u)$ is isomorphic to
  $\Gamma(\hypo_n,\colreading{\qrtableau{u}})$. By \fullref[(2)]{Proposition}{prop:charhighestweighttableau}, there is a
  unique highest-weight word in $\Gamma(\hypo_n,\colreading{\qrtableau{u}})$. Consequently,
  $\Gamma(\hypo_n,u)$ contains a unique highest-weight word.
\end{proof}

In the crystal graph of the plactic monoid (that is the plactic monoid of type $A_n$), the highest-weight words are
characterized combinatorially as follows. Recall from \fullref{Subsection}{subsec:propertiesgammaplacn} that a
Yamanouchi word is a word $w$ in $\aA_n^*$ such for any suffix $v$ of $w$, it holds that
$|v|_1 \geq |v|_2 \geq \ldots \geq |v|_n$. The highest-weight words in the crystal graph of the plactic monoid are
precisely the Yamanouchi words \cite[\S~5.5]{lothaire_algebraic}.

Yamanouchi words do \emph{not} characterize highest-weight words in connected components of the quasi-crystal graph
$\Gamma(\hypo_n)$. For example, the highest-weight word in $\Gamma(\hypo_n,2112)$ is $2112$, which has the suffix
$v = 2$ that does not satisfy $|v|_1 \geq |v|_2$.

Let $\max(u)$ denote the largest symbol in the word $u$.

\begin{proposition}
  \label{prop:charhighestweight}
  A word $u \in \aA_n^*$ is highest-weight in a component of $\Gamma(\hypo_n)$ if and only if it contains all symbols in
  $\set{1,\ldots,\max(u)}$, with the condition that it has an $i$-inversion for all
  $i \in \set{1,\ldots,\max(u)-1}$.
\end{proposition}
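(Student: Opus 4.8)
The plan is to reduce the statement to understanding, symbol by symbol, when the quasi-Kashiwara raising operators fail to be defined on $u$. Throughout, take $u$ to be non-empty (so that $\max(u)$ makes sense) and write $m = \max(u)$. The first step is to recall that $u$ is highest-weight in its component precisely when $\e_i(u)$ is undefined for every $i \in \set{1,\ldots,n-1}$: the forward implication is immediate because the operators $\e_i$ strictly raise weight, and the reverse implication follows from the earlier results, since a word on which no $\e_i$ is defined has the same incident edges as $\colreading{\qrtableau{u}}$ by \fullref{Lemma}{lem:hypocongimpliessameedges}, so $\colreading{\qrtableau{u}}$ is then the (unique, maximal-weight) highest-weight word of its component by \fullref[(2)]{Proposition}{prop:charhighestweighttableau}, and $\wt{u} = \wt{\colreading{\qrtableau{u}}}$ by \eqref{eq:charhypocong}. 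The second ingredient is simply the definition of $\e_i$: the operator $\e_i(u)$ is undefined if and only if either $u$ has an $i$-inversion, or $u$ is $i$-inversion-free and contains no symbol $i+1$.

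For the `only if' direction, suppose $u$ is highest-weight, so that $\e_i(u)$ is undefined for all $i$. I would first argue that $u$ contains every symbol of $\set{1,\ldots,m}$: otherwise, letting $j$ be the largest symbol of $\set{1,\ldots,m}$ that does not occur in $u$, we have $j < m$ (since $m$ occurs) and, by maximality of $j$, the symbol $j+1$ occurs in $u$; but $u$ has no symbol $j$, hence no $j$-inversion, so $\e_j(u)$ would be defined, a contradiction. Granting this, for each $i \in \set{1,\ldots,m-1}$ both $i$ and $i+1$ occur in $u$, so $\e_i(u)$ can fail to be defined only because $u$ has an $i$-inversion; this is exactly the asserted condition.

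For the `if' direction, suppose $u$ contains all of $1,\ldots,m$ and has an $i$-inversion for every $i \in \set{1,\ldots,m-1}$. Then for $i \in \set{1,\ldots,m-1}$ the $i$-inversion makes $\e_i(u)$ undefined, while for $i \in \set{m,\ldots,n-1}$ the symbol $i+1$ exceeds $\max(u)$ and so does not occur in $u$, whence $u$ is $i$-inversion-free with no symbol $i+1$, and $\e_i(u)$ is again undefined. Thus no raising operator is defined on $u$, so $u$ is highest-weight. The argument is essentially bookkeeping with the definition of $\e_i$, so there is no serious obstacle; the only point that genuinely needs care is the equivalence of `highest-weight' with `no raising operator defined' recalled in the first step, and this has already been set up by the preceding propositions.
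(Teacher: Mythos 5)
Your proof is correct and takes essentially the same route as the paper, whose entire proof is the single sentence that these are precisely the words on which every operator $\e_i$ is undefined. You simply make explicit the two ingredients the paper leaves implicit --- the equivalence of `highest-weight' with `no raising operator defined' (via the isomorphism onto the quasi-ribbon-word component), and the symbol-by-symbol case analysis on the definition of $\e_i$ --- and both check out.
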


\begin{proof}
  These are precisely the words for which all operators $\e_i$ are undefined.
\end{proof}

In the crystal graph $\Gamma(\plac_n)$, suffixes of highest-weight words are also highest-weight; this is an immediate
consequence of the definition of a Yamanouchi word. This does not hold in $\Gamma(\hypo_n)$: the highest-weight word
$2112$ has the suffix $2$, which is not highest-weight.

\section{The quasi-crystal graph and the Robinson--Schensted--Knuth correspondence}
\label{sec:robinsonschensted}

The previous sections constructed the quasi-crystal graph $\Gamma(\hypo_n)$ and showed that the hypoplactic congruence
$\hypocong$ corresponds to quasi-crystal isomorphisms (that is, weight-preserving labelled digraph isomorphisms) between
its connected components, just as the plactic congruence $\placcong$ corresponds to crystal isomorphisms (that is,
weight-preserving labelled digraph isomorphisms) between connected components of the crystal graph
$\Gamma(\plac_n)$. The following result shows that the interaction of the quasi-crystal graph and the hypoplactic
analogue of the Robinson--Schensted--Knuth correspondence exactly parallels the very elegant interaction of the crystal graph
and the usual Robinson--Schensted--Knuth correspondence. Just as the connected components of the crystal graph $\Gamma(\plac_n)$ are indexed
by standard Young tableaux, the connected components of the quasi-crystal graph $\Gamma(\hypo_n)$ are indexed by recording ribbons:

\begin{theorem}
  Let $u,v \in \aA_n^*$. The words $u$ and $v$ lie in the same connected component of $\Gamma(\hypo_n)$ if and only if
  $\recribbon{u} = \recribbon{v}$.
\end{theorem}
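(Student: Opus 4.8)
The plan is to reduce both directions to a single extra fact about the insertion algorithm, namely that \emph{the recording ribbon depends only on the standardization}: $\recribbon{u} = \recribbon{\std{u}}$ for every $u \in \aA^*$. I would prove this first, as a lemma. The point is that the branch taken at each step of \fullref{Algorithm}{alg:hypoplacticinsert} is determined entirely by the order comparisons ``$x \leq a < z$'' between the symbol $a$ being inserted and the current entries of the quasi-ribbon tableau, and these comparisons are controlled by the relative order of the letters of $u$, that is, by $\std{u}$. Tracking, step by step, the cell created by each insertion then shows that the sequence of shapes, and the position of the newly created cell at each step, are the same for $u$ and for $\std{u}$; hence $\recribbon{u} = \recribbon{\std{u}}$.

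Granting this lemma, the forward implication is immediate: if $u$ and $v$ lie in the same connected component of $\Gamma(\hypo_n)$, then $v$ is obtained from $u$ by a sequence of quasi-Kashiwara operators $\e_i$, $\f_i$, so $\std{u} = \std{v}$ by \fullref{Proposition}{prop:eipreservesstd}, and therefore $\recribbon{u} = \recribbon{\std{u}} = \recribbon{\std{v}} = \recribbon{v}$.

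For the converse I would argue by counting. Suppose $\recribbon{u} = \recribbon{v} =: R$, and let $\alpha$ be the shape of $R$. Since $\recribbon{w}$ always has the same shape as $\qrtableau{w}$, both $\qrtableau{u}$ and $\qrtableau{v}$ have shape $\alpha$. On one hand, $u \hypocong \colreading{\qrtableau{u}}$ (since $\colreading{\qrtableau{u}}$ is a quasi-ribbon word, so $\qrtableau{\colreading{\qrtableau{u}}} = \qrtableau{u}$), hence $u \sim \colreading{\qrtableau{u}}$ by \fullref{Proposition}{prop:hypocongimpliessim}, so there is a quasi-crystal isomorphism from $\Gamma(\hypo_n, u)$ onto the connected component of $\colreading{\qrtableau{u}}$; by \fullref[(1)]{Proposition}{prop:charhighestweighttableau} that component is precisely the set of quasi-ribbon words over $\aA_n$ of shape $\alpha$, which is in bijection with the set of quasi-ribbon tableaux over $\aA_n$ of shape $\alpha$. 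Writing $T$ for the (finite) cardinality of this last set, we get $\abs{\Gamma(\hypo_n, u)} = T$. On the other hand, the bijection $w \mapsto \parens{\qrtableau{w}, \recribbon{w}}$ restricts, over $\aA_n^*$ and with second coordinate fixed to $R$, to a bijection $w \mapsto \qrtableau{w}$ from $\gset{w \in \aA_n^*}{\recribbon{w} = R}$ onto that same set of quasi-ribbon tableaux, so $\abs{\gset{w \in \aA_n^*}{\recribbon{w} = R}} = T$ as well. By the forward implication, every word of $\Gamma(\hypo_n, u)$ has recording ribbon $R$, so $\Gamma(\hypo_n, u) \subseteq \gset{w \in \aA_n^*}{\recribbon{w} = R}$; both sides are finite of the same size $T$, hence equal. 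Since $\recribbon{v} = R$, the word $v$ lies in this set, and therefore in $\Gamma(\hypo_n, u)$, so $u$ and $v$ lie in the same connected component.

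The main difficulty is the lemma $\recribbon{u} = \recribbon{\std{u}}$. It is intuitively clear, but the bookkeeping is delicate, because the ribbon diagram produced by \fullref{Algorithm}{alg:hypoplacticinsert} can slide bodily as cells are appended, so one must check carefully that ``the place where $a_i$ was inserted'' is literally the same cell for $u$ and for $\std{u}$ at every step $i$ --- even though standardization does not commute with taking prefixes. (If an equivalent statement is already available in the literature, for instance in Novelli's work, it can simply be cited.)
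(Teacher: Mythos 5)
Your argument is correct; the forward implication follows essentially the paper's route, but your converse is genuinely different. For the forward direction the paper, like you, reduces everything to the fact that the recording ribbon is determined by the standardization: it invokes \fullref{Proposition}{prop:eipreservesstd} to get $\std{u}=\std{v}$, and then applies \fullref{Proposition}{prop:shapeqrtuisdescomp} to every prefix to conclude that the two insertion processes pass through the same sequence of shapes and hence place each new cell in the same position. That prefix-by-prefix shape argument is also the cleanest way to discharge the ``delicate bookkeeping'' in your lemma $\recribbon{u}=\recribbon{\std{u}}$: since $\std{u_1\cdots u_h}=\std[\big]{\std{u}_1\cdots\std{u}_h}$ for every $h$, the shapes of all prefixes of $u$ and of $\std{u}$ coincide, so you can prove your lemma this way instead of tracking the branching of \fullref{Algorithm}{alg:hypoplacticinsertionone} cell by cell. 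For the converse the paper passes to the highest-weight words $\hat u$ and $\hat v$ of the two components: their recording ribbons agree by the forward direction, so $\qrtableau{\hat u}$ and $\qrtableau{\hat v}$ have the same shape, whence $\wt{\hat u}=\wt{\hat v}$ by \fullref{Corollary}{corol:charhighestweighttableaubyshapes}, whence $\colreading{\qrtableau{\hat u}}=\colreading{\qrtableau{\hat v}}$ by \fullref{Corollary}{corol:qrthighestweightsameweightequal}, and finally $\hat u=\hat v$ by the quasi-ribbon Robinson--Schensted--Knuth bijection. Your counting argument avoids highest-weight words entirely: it sandwiches $\Gamma(\hypo_n,u)$ inside $\gset{w\in\aA_n^*}{\recribbon{w}=R}$ and shows both sets have cardinality equal to the number of quasi-ribbon tableaux of shape $\alpha$ with entries in $\aA_n$, using \fullref[(1)]{Proposition}{prop:charhighestweighttableau} on one side and the quasi-ribbon RSK bijection on the other (for the latter you should note, when restricting the bijection to $\aA_n^*$, that any $w$ with $\qrtableau{w}$ having entries in $\aA_n$ automatically lies in $\aA_n^*$ because $\wt{w}=\wt{\colreading{\qrtableau{w}}}$). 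This is a clean alternative; its only costs are that it is slightly less constructive --- the paper's proof exhibits an explicit common vertex of the two components --- and that it leans on finiteness of the component, which is harmless here since the quasi-Kashiwara operators preserve length.
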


\begin{proof}
  Suppose that $u$ and $v$ lie in the same connected component of $\Gamma(\hypo_n)$. Note first that $|u| = |v|$. Let
  $u = u_1\cdots u_k$ and $v = v_1\cdots v_k$, where $u_h,v_h \in \aA_n$. By \fullref{Proposition}{prop:eipreservesstd},
  $\std{u} = \std{v}$. Therefore $\std{u_1\cdots u_h} = \std{v_1\cdots v_h}$ for all $h$ (this is immediate from the
  definition of standardization \cite[Lemma~2.2]{novelli_hypoplactic}). Thus $\qrtableau{u_1\cdots u_h}$ and $\qrtableau{v_1\cdots v_h}$
  both have shape $\descomp{\std{u_1\cdots u_h}^{-1}}$ for all $h$. The sequence of these shapes determines where new
  symbols are inserted during the computation of $\qrtableau{u}$ and $\qrtableau{v}$ by
  \fullref{Algorithm}{alg:hypoplacticinsert}, and so $\recribbon{u} = \recribbon{v}$.

  Now suppose $\recribbon{u} = \recribbon{v}$. Let $\hat{u}$ and $\hat{v}$ be the highest-weight words in
  $\Gamma(\hypo_n,u)$ and $\Gamma(\hypo_n,v)$, respectively. By the forward implications,
  $\recribbon{\hat{u}} = \recribbon{u} = \recribbon{v} = \recribbon{\hat{v}}$. Note that
  $\hat{u} \sim \colreading{\qrtableau{\hat{u}}}$ and $\hat{v} \sim \colreading{\qrtableau{\hat{v}}}$, and
  $\colreading{\qrtableau{\hat{u}}}$ and $\colreading{\qrtableau{\hat{v}}}$ are highest-weight quasi-ribbon
  words. Furthermore, the quasi-ribbon tableaux $\qrtableau{\hat{u}}$ and $\qrtableau{\hat{v}}$ have the same shape,
  since they both have the same shape as $\recribbon{\hat{u}} = \recribbon{\hat{v}}$. By
  \fullref{Corollary}{corol:charhighestweighttableaubyshapes},
  $\wt{\colreading{\qrtableau{\hat{u}}}} = \wt{\colreading{\qrtableau{\hat{v}}}}$; thus
  $\colreading{\qrtableau{\hat{u}}} = \colreading{\qrtableau{\hat{u}}}$ by
  \fullref{Corollary}{corol:qrthighestweightsameweightequal}. Since $\qrtableau{\hat{u}} = \qrtableau{\hat{u}}$ and
  $\recribbon{\hat{u}} = \recribbon{\hat{v}}$, it follows that $\hat{u} = \hat{v}$ by the quasi-ribbon tableau version
  of the Robinson--Schensted--Knuth correspondence (see the discussion in \fullref{Section}{sec:quasiribbontableau}). Hence
  $\Gamma(\hypo_n,u) = \Gamma(\hypo_n,\hat{u}) = \Gamma(\hypo_n,\hat{v}) = \Gamma(\hypo_n,v)$. This completes the proof
  of the reverse implication.
\end{proof}

\section{Structure of the quasi-crystal graph}
\label{sec:quasicrystalstructure}

The aim of this section is to study the structure of quasi-crystal graph and how it interacts with
usual crystal graph. Many of these results are illustrated in the components of $\Gamma(\hypo_4)$ and $\Gamma(\plac_4)$
shown in \fullref{Figures}{fig:gamma4partnonisom} to \ref{fig:gammaplac4containinggammahypo4}. These figures show both
the actions of quasi-Kashiwara operators (indicated by solid lines) and other actions of Kashiwara operators (indicated
by dotted lines). (Recall from \fullref{Proposition}{prop:keddefinedimpliesefdefined} that the action of the
quasi-Kashiwara operators is a restriction of the action of the Kashiwara operators.)

\begin{figure}[t]
  \centering
  \includegraphics{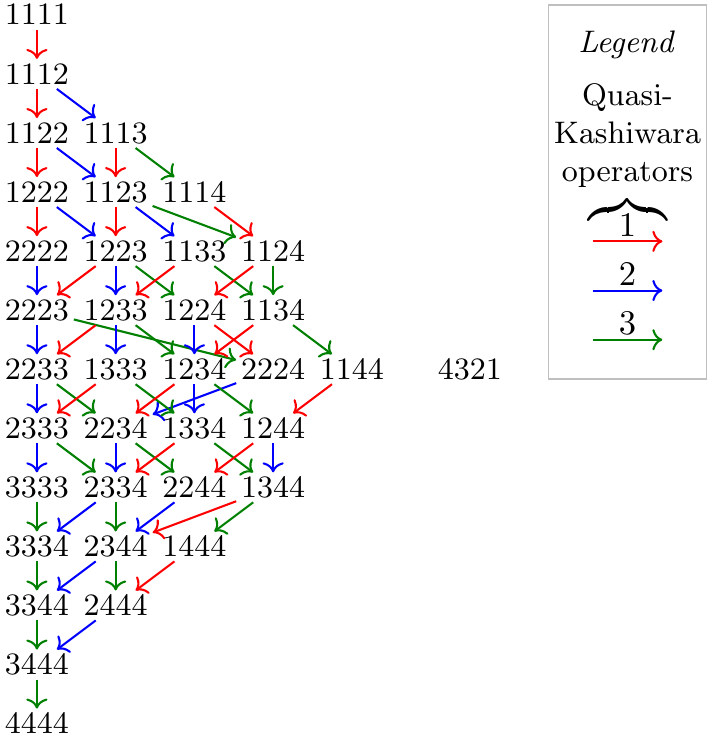}
  \caption{The connected components $\Gamma(\hypo_4,1111)$ and $\Gamma(\hypo_4,4321)$ of the quasi-crystal graph
    $\Gamma(\hypo_4)$. These are also the connected components $\Gamma(\plac_4,1111)$ and $\Gamma(\plac_4,4321)$ of the
    crystal graph $\Gamma(\plac_4)$: there is no vertex in these components where some $\ke_i$ or $\kf_i$ is defined but
    the corresponding $\e_i$ or $\f_i$ is not. These two components are not isomorphic to any other components of either
    $\Gamma(\hypo_4)$ or $\Gamma(\plac_4)$. The other components of $\Gamma(\hypo_n)$ and $\Gamma(\plac_n)$ whose
    vertices are length-$4$ words are shown in \fullref{Figures}{fig:gamma4partisom1} and \ref{fig:gamma4partisom2}.}%
  \label{fig:gamma4partnonisom}%
\end{figure}
\begin{figure}[p]
  \centering
  \includegraphics{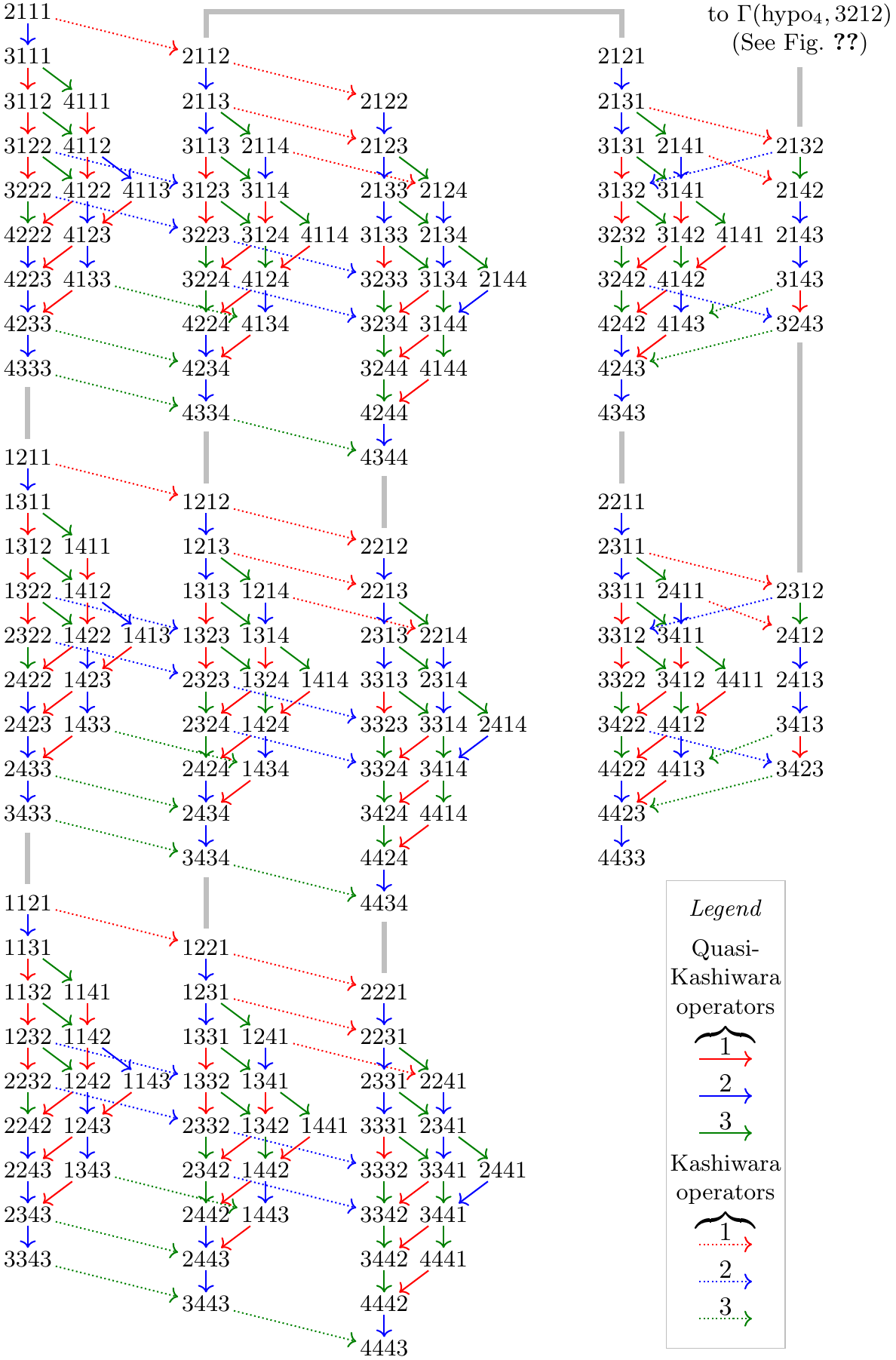}
  \caption{Thirteen components of $\Gamma(\hypo_4)$, which together lie inside five components of
    $\Gamma(\plac_n)$. Dotted lines show actions of operators $\ke_i$ and $\kf_i$ that are not also actions of operators
    $\e_i$ and $\f_i$. Each of these components of $\Gamma(\hypo_4)$ is isomorphic to at least one other component; grey
    lines link isomorphic components. Other components are shown in \fullref{Figures}{fig:gamma4partnonisom} and
    \ref{fig:gamma4partisom2}.}%
  \label{fig:gamma4partisom1}%
\end{figure}
\begin{figure}[t]
  \centering
  \includegraphics{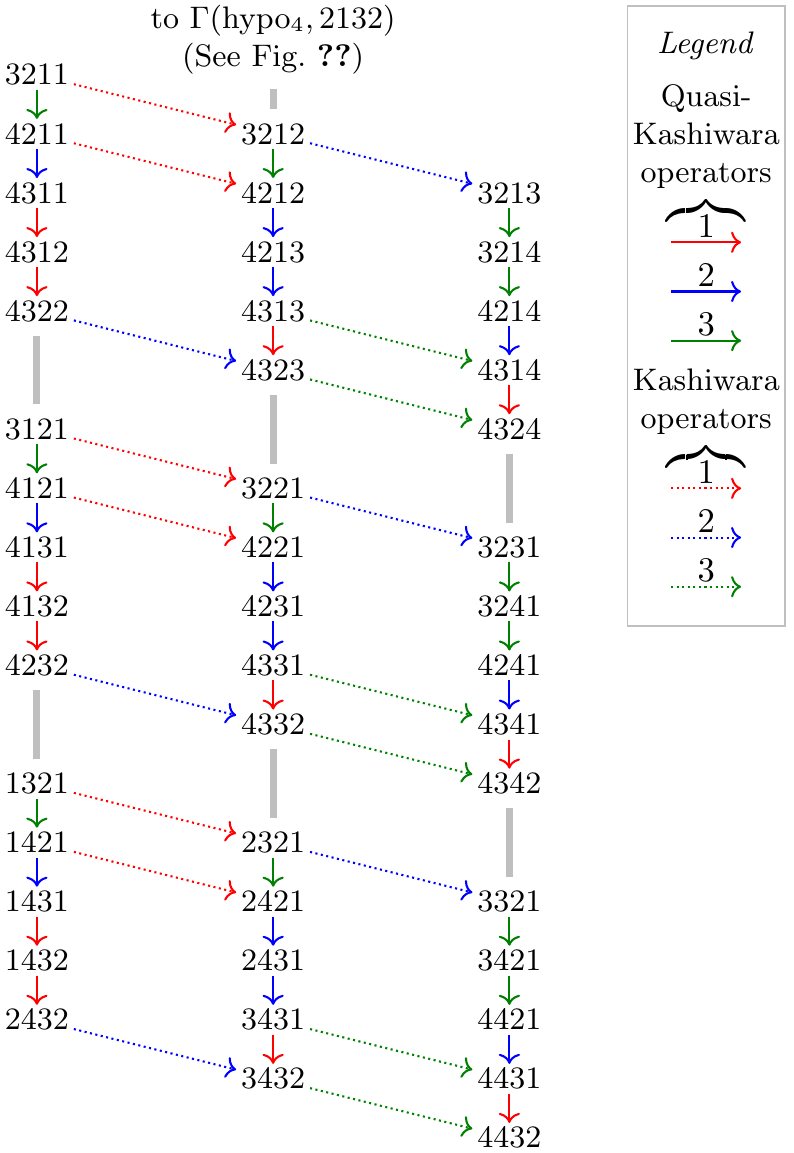}
  \caption{Nine components of $\Gamma(\hypo_4)$, which together lie inside three of the connected components
    $\Gamma(\plac_n)$. Dotted lines show actions of operators $\ke_i$ and $\kf_i$ that are not also actions of operators
    $\e_i$ and $\f_i$. Each of these components of $\Gamma(\hypo_4)$ is isomorphic to at least one other component; grey
    lines link isomorphic components. The other components whose vertices are length-$4$ words are shown in
    \fullref{Figures}{fig:gamma4partnonisom} and \ref{fig:gamma4partisom1}.}%
  \label{fig:gamma4partisom2}%
\end{figure}

\subsection{Sizes of classes and quasi-crystals}

As previously noted, by fixing a Young tableau $P$ and varying $Q$ over all standard Young tableaux of the same shape as
$P$, one obtains the plactic class corresponding to $P$. Notice that one obtains as a corollary that the size of this
class depends on the \emph{shape} of $P$, not on the \emph{entries} of $P$. The so-called hook-length formula gives
the number of standard Young tableaux of a given shape (see \cite{frame_hookgraphs} and \cite[\S~4.3]{fulton_young}) and
thus of the size of a plactic class corresponding to an element of that shape.

This section uses the quasi-crystal graph to give an analogue of the hook-length formula for quasi-ribbon tableaux, in
the sense of stating a formula for the size of hypoplactic classes corresponding to quasi-ribbon tableau of a given
shape. Novelli \cite[Theorem~5.1]{novelli_hypoplactic} proves that for any compositions $\alpha = (\alpha_1,\ldots,\alpha_h)$ and
$\gamma$,
\begin{equation}
\label{eq:hypoplacticclasssizenovelli}
\sum_{\beta \preceq \alpha} \#(\beta,\gamma) = \binom{\alpha_1+\ldots+\alpha_h}{\alpha_1,\ldots,\alpha_h},
\end{equation}
where $\#(\beta,\gamma)$ is the size of the hypoplactic class corresponding to the quasi-ribbon tableau of shape
$\beta$ and content $\gamma$. (Recall that $\beta\preceq\alpha$ denotes that $\beta$ is a coarser composition than
$\alpha$; see \fullref{Section}{sec:quasiribbontableau}.) Since the size of the hypoplactic class corresponding to a
quasi-ribbon tableau of shape $(\alpha_1)$ is always $1$, the formula \eqref{eq:hypoplacticclasssizenovelli} allows one
to iteratively compute the size of an arbitrary hypoplactic class.

However, it is not immediately clear from \eqref{eq:hypoplacticclasssizenovelli} that the size of a hypoplactic class is only
dependent on the \emph{shape} of the quasi-ribbon tableau, not on its content:

\begin{proposition}
  \label{prop:hypoplacticclasssizeshape}
  Hypoplactic classes corresponding to quasi-ribbon tableaux of the same shape all have the same size.
\end{proposition}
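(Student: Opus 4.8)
The plan is to read the size of a hypoplactic class directly off the quasi-ribbon analogue of the Robinson--Schensted--Knuth correspondence established in \fullref{Section}{sec:quasiribbontableau}. Fix a quasi-ribbon tableau $P$ of shape $\beta$. By the characterization~H2 of $\hypocong$, the hypoplactic class corresponding to $P$ is exactly the set $\gset[\big]{v \in \aA^*}{\qrtableau{v} = P}$. Since the map $v \mapsto \parens[\big]{\qrtableau{v},\recribbon{v}}$ is a bijection between $\aA^*$ and the set of pairs consisting of a quasi-ribbon tableau and a recording ribbon of the same shape, restricting this bijection to those $v$ with $\qrtableau{v} = P$ yields a bijection between the hypoplactic class of $P$ and the set of recording ribbons of shape $\beta$. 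The number of recording ribbons of a given shape $\beta$ is a purely combinatorial quantity determined by the ribbon diagram of shape $\beta$, and in particular does not depend on the content of $P$; hence any two quasi-ribbon tableaux of the same shape have hypoplactic classes of equal size.

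Alternatively, and perhaps more in keeping with this section, one can run the argument through the quasi-crystal graph. The hypoplactic class of a word $w$ is its $\sim$-class, and using the uniqueness of highest-weight words in each connected component one checks that the $\sim$-class of $w$ is in bijection with the collection of connected components of $\Gamma(\hypo_n)$ that are isomorphic to $\Gamma(\hypo_n,w)$: each such component contains exactly one vertex occupying the position of $w$, and distinct components contribute distinct vertices. If $\qrtableau{w}$ has shape $\beta$, then $w \sim \colreading{\qrtableau{w}}$ by \fullref{Proposition}{prop:hypocongimpliessim}, and by \fullref{Corollary}{corol:qrwsameshapecomponent} together with \fullref[(1)]{Proposition}{prop:charhighestweighttableau} the component $\Gamma(\hypo_n,w)$ is (isomorphic to) the single connected component comprising all quasi-ribbon words of shape $\beta$ --- with no dependence on the content of $\qrtableau{w}$. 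Hence the number of components isomorphic to $\Gamma(\hypo_n,w)$ depends only on $\beta$, and the conclusion follows.

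I do not expect any genuinely hard step: the Robinson--Schensted--Knuth route is immediate once the bijectivity statement from \fullref{Section}{sec:quasiribbontableau} is invoked. The only point requiring a little care appears in the quasi-crystal reformulation, namely the claim that a $\sim$-class is in bijection with the set of components isomorphic to a fixed one; this relies on the fact that a quasi-crystal isomorphism is determined by the image of the highest-weight word --- since it preserves weight and hence fixes the unique highest-weight word, and commutes with the operators $\e_i$ and $\f_i$ --- so that there is no ambiguity about which vertex of an isomorphic component lies in the $\sim$-class. I would therefore present the Robinson--Schensted--Knuth argument as the proof proper and, if desired, mention the quasi-crystal version as a remark.
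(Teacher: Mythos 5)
Your proposal is correct, but your primary argument is genuinely different from the paper's. The paper stays entirely inside the quasi-crystal framework: by Proposition~\ref{prop:charhighestweighttableau}(1) the readings $\colreading{\qrtableau{u}}$ and $\colreading{\qrtableau{v}}$ of two same-shape tableaux lie in a single connected component, so some sequence of operators $\e_i$, $\f_i$ carries one to the other; since each operator, where defined, is a bijection between $\hypocong$-classes, this transports one class bijectively onto the other. Your first argument instead reads the class size directly off the Krob--Thibon correspondence: fixing $\qrtableau{v}=P$ and varying $\recribbon{v}$ identifies the class of $P$ with the set of recording ribbons of shape $\beta$. That is valid (a word with $\qrtableau{v}=P$ automatically has the same content as $P$, so the rank restriction is harmless), it is shorter, and it actually proves more --- it identifies the common class size as the number of recording ribbons of the given shape --- but it leans on the bijectivity of $v\mapsto(\qrtableau{v},\recribbon{v})$, which the paper only quotes, whereas the paper's route is self-contained within the quasi-crystal machinery it has just built. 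Your second, quasi-crystal argument is closer in spirit but still not the paper's: you count components isomorphic to $\Gamma(\hypo_n,w)$, using the (correct) observation that an isomorphism between components is unique because it must match the unique highest-weight vertices and commutes with the operators, rather than transporting one class onto another along an operator path. All three arguments are sound.
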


\begin{proof}
  Let $u,v \in \aA_n^*$ be such that $\qrtableau{u}$ and $\qrtableau{v}$ have the same shape. Then
  $\colreading{\qrtableau{u}}$ and $\colreading{\qrtableau{v}}$ lie in the same connected component by
  \fullref[(1)]{Proposition}{prop:charhighestweighttableau}. Thus there is a sequence $\g_{i_1},\ldots,\g_{i_r}$ of
  operators $\e_i$ and $\f_i$ such that
  $\g_{i_1}\cdots\g_{i_r}(\colreading{\qrtableau{u}}) = \colreading{\qrtableau{v}}$. Each $\e_i$ and $\f_i$, when
  defined, is a bijection between $\hypocong$-classes, and so $\brackets[\big]{\colreading{\qrtableau{u}}}_{\hypocong}$ and
  $\brackets[\big]{\colreading{\qrtableau{v}}}_{\hypocong}$ have the same size. Since $u \sim \colreading{\qrtableau{u}}$ and
  $v \sim \colreading{\qrtableau{v}}$, it follows that $[u]_{\hypocong}$ and $[v]_{\hypocong}$ have the same size.
\end{proof}

The formula for the size of hypoplactic classes is also straightforward when one uses the quasi-crystal graph:

\begin{theorem}
  \label{thm:hypoplacticclasssize}
  The size of any hypoplactic class in $\aA_n^*$ whose quasi-ribbon tableau has shape $\alpha$ is
  \begin{equation}
    \label{eq:hypoplacticclasssize}
    \begin{cases}
      {\displaystyle\sum_{\beta\preceq\alpha}(-1)^{\clen{\alpha}-\clen{\beta}} \binom{\cwt\beta}{\beta_1,\cdots,\beta_{\clen\beta}}} & \text{if $\clen\alpha \leq n$,} \\
        0 & \text{otherwise.}
    \end{cases}
  \end{equation}
\end{theorem}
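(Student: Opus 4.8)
The plan is to combine Proposition~\fullref{Proposition}{prop:hypoplacticclasssizeshape} (that the size of a hypoplactic class depends only on the shape of its quasi-ribbon tableau) with Novelli's identity~\eqref{eq:hypoplacticclasssizenovelli}, and then to extract the size of a single class by a M\"obius inversion over the poset of compositions ordered by coarsening. First dispose of the case $\clen\alpha > n$: a quasi-ribbon tableau of shape $\alpha$ has $\clen\alpha$ rows, and, by the second of the immediate consequences of the definition of a quasi-ribbon tableau noted in \fullref{Section}{sec:quasiribbontableau}, its bottom row can contain only symbols greater than or equal to $\clen\alpha$; if $\clen\alpha > n$ this is impossible over $\aA_n$, so no word of $\aA_n^*$ has a quasi-ribbon tableau of shape $\alpha$, and the size is $0$, as claimed.

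Now assume $\clen\alpha \leq n$. By Proposition~\fullref{Proposition}{prop:hypoplacticclasssizeshape} the size of a hypoplactic class whose quasi-ribbon tableau has shape $\beta$ depends only on $\beta$; moreover, since every relation in $\drel{R}_\hypo$ preserves the multiset of letters of a word, this size does not depend on the ambient rank (provided it is at least $\clen\beta$) nor on the content realizing $\beta$. Write $f(\beta)$ for this common value and set $g(\beta) \coloneqq \binom{\cwt\beta}{\beta_1,\ldots,\beta_{\clen\beta}}$. Applying~\eqref{eq:hypoplacticclasssizenovelli} with the content $(1,1,\ldots,1)$ of weight $N \coloneqq \cwt\alpha$ — for which filling the ribbon diagram of any shape $\beta$ of weight $N$ with $1,2,\ldots,N$ in left-to-right, top-to-bottom order yields a valid standard quasi-ribbon tableau of shape $\beta$, so that Novelli's $\#(\beta,(1,\ldots,1))$ equals $f(\beta)$ — gives, for every composition $\alpha$,
\[
  \sum_{\beta \preceq \alpha} f(\beta) = g(\alpha).
\]
Since $\clen\beta \leq \clen\alpha \leq n$ for every $\beta \preceq \alpha$, each term here is defined.

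It remains to invert this relation, which holds for $\alpha$ and for every composition coarser than $\alpha$. The compositions of a fixed weight $N$, ordered by $\preceq$, form a poset isomorphic to the Boolean lattice of subsets of $\set{1,\ldots,N-1}$ via $\beta \mapsto D(\beta)$: under this map $\beta \preceq \alpha$ corresponds to $D(\beta) \subseteq D(\alpha)$, and $\clen\beta = \abs{D(\beta)} + 1$. Hence its M\"obius function is $\mu(\beta,\alpha) = (-1)^{\abs{D(\alpha)} - \abs{D(\beta)}} = (-1)^{\clen\alpha - \clen\beta}$, and M\"obius inversion yields $f(\alpha) = \sum_{\beta \preceq \alpha} (-1)^{\clen\alpha - \clen\beta} g(\beta)$; unfolding the definition of $g$, this is precisely the asserted formula
\[
  f(\alpha) = \sum_{\beta \preceq \alpha} (-1)^{\clen\alpha - \clen\beta} \binom{\cwt\beta}{\beta_1,\ldots,\beta_{\clen\beta}}.
\]
Alternatively one may avoid quoting M\"obius inversion and argue by induction on $\clen\alpha$: the base case $\clen\alpha = 1$ is immediate, and in the inductive step one writes $f(\alpha) = g(\alpha) - \sum f(\beta)$ summed over compositions $\beta$ strictly coarser than $\alpha$, substitutes the inductive formula for each such $f(\beta)$, interchanges the order of summation, and applies the identity $\sum_{D(\beta) \subseteq U \subsetneq D(\alpha)} (-1)^{\abs{U} - \abs{D(\beta)}} = -(-1)^{\abs{D(\alpha)} - \abs{D(\beta)}}$, valid whenever $D(\beta) \subsetneq D(\alpha)$.

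The only real subtlety I anticipate is the bookkeeping in the second paragraph: verifying that the shape-only quantity $f(\beta)$ genuinely replaces Novelli's $\#(\beta,\gamma)$ for all coarsenings $\beta$ of $\alpha$ simultaneously (which is why the all-distinct content is convenient, since it admits a quasi-ribbon tableau of every shape of weight $N$) and that it is independent of the ambient rank. Everything beyond that is a routine M\"obius inversion on a Boolean lattice.
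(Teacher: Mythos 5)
Your proof is correct, but it takes a genuinely different route from the paper's. The paper stays entirely inside the quasi-crystal framework: since the operators $\e_i$ and $\f_i$ are bijections between hypoplactic classes, it suffices to count the highest-weight class of shape $\alpha$, which (by \fullref{Corollary}{corol:charhighestweighttableaubyshapes} and the characterization of highest-weight words) is exactly the set of words of weight $\alpha$ possessing an $i$-inversion for every $i \in \set{1,\ldots,\clen\alpha-1}$; an explicit bijection between the $i$-inversion-free words of weight $\gamma$ and all words of weight $\gamma'$ (the composition obtained by merging $\gamma_i$ with $\gamma_{i+1}$) then feeds an inclusion--exclusion over the set of indices at which inversion-freeness is imposed, yielding \eqref{eq:hypoplacticclasssize} directly. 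You instead combine \fullref{Proposition}{prop:hypoplacticclasssizeshape} with the cited identity \eqref{eq:hypoplacticclasssizenovelli} and perform a M\"obius inversion over the Boolean lattice $\beta \mapsto D(\beta)$; this is sound, and is essentially the unrolling of the ``iterative computation'' the paper alludes to after quoting Novelli's result. Its advantage is brevity; its cost is reliance on the external identity, which the paper's argument in effect reproves from scratch. The two points you flagged as subtleties are indeed the ones that matter: the identity is only usable here with a content that realizes every shape $\beta \preceq \alpha$ (for $\alpha=(2,1)$ and content $(3)$ the left-hand side of \eqref{eq:hypoplacticclasssizenovelli} is $1$, not $\binom{3}{2,1}$), which your choice of standard content guarantees; and transporting the count from $\aA_N^*$ back to $\aA_n^*$ needs the observation that $\hypocong$ preserves content, so that the class of a word over $\aA_n$ is the same set in any larger rank. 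Both are handled correctly in your write-up.
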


\begin{proof}
  Note first that in a quasi-ribbon tableau, any symbol in $\aA_n$ must lie in the first $n$ rows. Thus if
  $\clen\alpha > n$, then there is no quasi-ribbon tableau of shape $\alpha$ with entries in $\aA_n$ and so the
  corresponding hypoplactic class if empty. So assume henceforth that $\clen\alpha \leq n$.

  Let $T$ be a quasi-ribbon tableau of shape $\alpha$; the aim is to describe the cardinality of the set
  $U_T = \gset{u \in \aA_n^*}{\qrtableau{u} = T}$. Since the operators $\e_i$ and $\f_i$ are bijections between
  hypoplactic classes, assume without loss of generality that $\colreading{T}$ is highest-weight. By
  \fullref{Corollary}{corol:charhighestweighttableaubyshapes}, $\wt{\colreading{T}} = \alpha$. Since all words in a
  hypoplactic class have the same weight, every word in $U_T$ has weight $\alpha$ (and thus contains exactly $\alpha_i$
  symbols $i$, for each $i \in \aA_n$). Furthermore, all words in $U_T$ are highest-weight and so have $i$-inversions
  for each $i \in \set{1,\ldots,\clen{\alpha}}$. Thus $U_T$ consists of exactly the words of weight $\alpha$, but that
  \emph{do not} have the property of containing all symbols $i$ to the left of all symbols $i+1$ for some
  $i \in \set{1,\ldots,\clen{\alpha}-1}$.

  For any weak composition $\gamma$ with $\clen\gamma \leq n$, the number of words in $\aA_n^*$ with weight $\gamma$ is
  \[
  \binom{\cwt{\gamma}}{\gamma_1,\cdots,\gamma_{\clen\gamma}}.
  % \frac{\cwt{\gamma}!}{\gamma_1!\cdots\gamma_{\clen\gamma}!}.
  \]
  Consider a word $u \in \aA_n^*$ with $\wt{u} = \gamma$ that is $i$-inversion-free. Then in $u$, every symbol $i$ lies
  to the left of every symbol $i+1$. Replacing each symbol $j+1$ by $j$ for each $j \geq i$ yields a word $u'$ with
  weight $\gamma' = (\gamma_1,\ldots,\gamma_{i-1},\gamma_i+\gamma_{i+1},\gamma_{i+2}\ldots,\gamma_n)$; note that
  $\gamma' \preceq \gamma$ and $\clen{\gamma'} = \clen\gamma - 1$. On the other hand, starting from $u'$ and replacing
  each symbol $j$ by $j+1$ for $j \geq i+1$ and replacing the rightmost $\gamma_{i+1}$ symbols $i$ by $i+1$ yields
  $u$. Thus there is a one-to-one correspondence between words of weight $\gamma$ that are $i$-inversion-free and words
  of weight $\gamma'$.

  Iterating this argument shows that there is a one-to-one correspondence between words of weight $\gamma$ that are
  $i$-inversion-free for $i \in I \subseteq \set{1,\ldots,n-1}$ and words of weight $\gamma_I$ for a (uniquely
  determined) $\gamma_I \preceq \gamma$.

  Hence, by the inclusion--exclusion principle, the number of such words that are $i$-inversion-free for all
  $i \in \set{1,\ldots,n-1}$ is given by \eqref{eq:hypoplacticclasssize}.
\end{proof}

For example, let $\alpha = (2,1,1,2)$. By \fullref{Theorem}{thm:hypoplacticclasssize}, the size of the
hypoplactic class whose quasi-ribbon tableau has shape $\alpha$ is
\begin{align*}
\frac{6!}{2!1!1!2!}
&- \frac{6!}{3!1!2!} - \frac{6!}{2!2!2!} - \frac{6!}{2!1!3!} \\
&+ \frac{6!}{4!2!} + \frac{6!}{3!3!} + \frac{6!}{2!4!} \\
&- \frac{6!}{6!} = 19.
\end{align*}
The quasi-ribbon word $143214$ corresponds to a quasi-ribbon tableau with shape $\alpha$, and the hypoplactic
class containing $u$ is
\begin{align*}
\set{&143214,413214,431214,432114,\\
&143241,413241,431241,432141,\\
&143421,413421,431421,432411,\\
&144321,414321,434121,434211,\\
&\phantom{414321,}\,441321,443121,443211},
\end{align*}
which contains $19$ elements, as expected.

For the sake of completeness, this section closes with a formula for the number of quasi-ribbon tableaux of a given
shape, which allows one to compute the size of a connected component of $\Gamma(\hypo_n)$. Notice that the proof of this
does not depend on applying the crystal structure.

\begin{theorem}
\label{thm:noqrtofshapei}
The number of quasi-ribbon tableaux of shape $\alpha$ and symbols from $\aA_n$ is
\[
\begin{cases}
\binom{n+\abs{\alpha}-\clen\alpha}{n-\clen\alpha} & \text{if $\clen\alpha \leq n$} \\
0 & \text{if $\clen\alpha > n$}.
\end{cases}
\]
\end{theorem}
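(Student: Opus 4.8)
The plan is to reduce the count to a routine monotone-word enumeration. First I would record the following description of quasi-ribbon tableaux. Reading the entries of a quasi-ribbon tableau $T$ of shape $\alpha = (\alpha_1,\ldots,\alpha_{\clen\alpha})$ row by row (top to bottom, and left to right within each row) yields a word $c_1 c_2\cdots c_N \in \aA_n^*$, where $N = \abs{\alpha}$. Since the rows of $T$ are non-decreasing, $c_k \le c_{k+1}$ whenever $k$ does not mark the end of a row; and since in a ribbon diagram every pair of vertically adjacent boxes consists of the last box of some row $h$ together with the first box of row $h+1$ (a column contains a contiguous block of rows, and two rows overlapping in a column must overlap in exactly one column), the strict-increase condition on columns of $T$ is equivalent to $c_k < c_{k+1}$ for every $k \in D(\alpha) = \set{\alpha_1,\ \alpha_1+\alpha_2,\ \ldots,\ \alpha_1+\cdots+\alpha_{\clen\alpha - 1}}$. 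Conversely, any weakly increasing word $c_1\cdots c_N$ over $\aA_n$ with $c_k < c_{k+1}$ for all $k \in D(\alpha)$ fills the ribbon diagram of shape $\alpha$ into a quasi-ribbon tableau, and the two assignments are mutually inverse. So the number sought is exactly the number of such constrained words.

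Next I would dispose of the case $\clen\alpha > n$: a weakly increasing word over $\aA_n$ has at most $n-1$ indices $k$ with $c_k < c_{k+1}$, whereas $\abs{D(\alpha)} = \clen\alpha - 1 \ge n$ would force at least $n$ of them, so no such word exists and the count is $0$. For $\clen\alpha \le n$, write $D(\alpha) = \set{p_1 < \cdots < p_{\clen\alpha - 1}}$ and pass from $c$ to the word $e_k = c_k - \#\gset{j}{p_j < k}$. One checks $e_{k+1} - e_k = (c_{k+1}-c_k) - [k \in D(\alpha)] \ge 0$ (using $c_{k+1} \ge c_k + 1$ when $k \in D(\alpha)$), so $e_1 \le \cdots \le e_N$; moreover $e_1 = c_1 \ge 1$ and $e_N = c_N - (\clen\alpha - 1) \le n - \clen\alpha + 1$. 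Thus $c \mapsto e$ is a bijection from the constrained words onto weakly increasing words of length $N$ over an alphabet of size $s = n - \clen\alpha + 1$, the inverse sending $e$ to $c_k = e_k + \#\gset{j}{p_j < k}$. Since there are $\binom{N+s-1}{s-1}$ weakly increasing words of length $N$ over a size-$s$ alphabet, the count is $\binom{\abs{\alpha}+n-\clen\alpha}{n-\clen\alpha}$, as claimed.

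There is no serious obstacle here beyond bookkeeping: the real content is the bijection in the first paragraph between quasi-ribbon tableaux of shape $\alpha$ and constrained monotone words, and then the standard order-shift that converts the forced strict steps into an alphabet reduction. In particular, neither the quasi-crystal graph nor the hypoplactic congruence is needed, consistent with the remark that this is a purely enumerative statement about ribbon fillings.
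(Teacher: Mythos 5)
Your proof is correct and follows essentially the same route as the paper's: both identify a quasi-ribbon filling of shape $\alpha$ with a weakly increasing sequence of length $\abs{\alpha}$ over $\aA_n$ whose ascents are forced to be strict exactly at the positions in $D(\alpha)$, and then count these by a standard stars-and-bars argument. The only (cosmetic) difference is in the final step: the paper counts the multiset of boundary positions where increases occur (a multiset of size $n-1$ containing $D(\alpha)$), whereas you apply the usual subtraction shift to pass to unconstrained weakly increasing words over an alphabet of size $n-\clen\alpha+1$; the two are equivalent.
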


\begin{proof}
  Consider a ribbon diagram of shape $\alpha$, where the boundaries between the cells, including the left boundary of
  the first cell and the right boundary of the last cell, are indexed by the numbers $0,1,\ldots,\abs{\alpha}$. Notice that
  $D(\alpha)$ be the set of indices of boundaries between vertically adjacent cells. For example, for shape
  $\alpha = (4,4,2,3)$, the indices are as follows:
  \[
  \begin{tikzpicture}
    \matrix[tableaumatrix] (t) {
      \null \& \null \& \null \& \null    \\
      \&\&\&\null \&\null \&\null \&\null \\
      \&\&\&\&\&\&\null \&\null           \\
      \&\&\&\&\&\&\&\null \&\null \&\null \\
    };
    \begin{scope}[every node/.append style={font=\scriptsize,inner sep=0.25mm}]
      \node (l0) at ($ (t-1-1.north west) + (0,3mm) $) {$0$};
      \node (l1) at ($ (t-1-2.north west) + (0,3mm) $) {$1$};
      \node (l2) at ($ (t-1-3.north west) + (0,3mm) $) {$2$};
      \node (l3) at ($ (t-1-4.north west) + (0,3mm) $) {$3$};
      \node (l4) at ($ (t-2-4.north west) + (-3mm,-3mm) $) {$4$};
      \node (l5) at ($ (t-2-5.south west) + (0,-3mm) $) {$5$};
      \node (l6) at ($ (t-2-6.north west) + (0,3mm) $) {$6$};
      \node (l7) at ($ (t-2-7.north west) + (0,3mm) $) {$7$};
      \node (l8) at ($ (t-3-7.north west) + (-3mm,-3mm) $) {$8$};
      \node (l9) at ($ (t-3-8.north west) + (3mm,3mm) $) {$9$};
      \node (l10) at ($ (t-4-8.north west) + (-3mm,-3mm) $) {$10$};
      \node (l11) at ($ (t-4-9.south west) + (0mm,-3mm) $) {$11$};
      \node (l12) at ($ (t-4-10.north west) + (0,3mm) $) {$12$};
      \node (l13) at ($ (t-4-10.north east) + (0,3mm) $) {$13$};
    \end{scope}
    \begin{scope}[line width=.6pt]
      \draw ($ (t-1-1.north west) + (0,-3.5mm) $) to (l0);
      \draw ($ (t-1-2.north west) + (0,-3.5mm) $) to (l1);
      \draw ($ (t-1-3.north west) + (0,-3.5mm) $) to (l2);
      \draw ($ (t-1-4.north west) + (0,-3.5mm) $) to (l3);
      \draw ($ (t-2-4.north west) + (3.5mm,0) $) to (t-2-4.north west) to (l4);
      \draw ($ (t-2-5.south west) + (0,3.5mm) $) to (l5);
      \draw ($ (t-2-6.north west) + (0,-3.5mm) $) to (l6);
      \draw ($ (t-2-7.north west) + (0,-3.5mm) $) to (l7);
      \draw ($ (t-3-7.north west) + (3.5mm,0) $) to (t-3-7.north west) to (l8);
      \draw ($ (t-3-8.north west) + (0,-3.5mm) $) to (t-3-8.north west) to (l9);
      \draw ($ (t-4-8.north west) + (3.5mm,0) $) to (t-4-8.north west) to (l10);
      \draw ($ (t-4-9.south west) + (0,3.5mm) $) to (l11);
      \draw ($ (t-4-10.north west) + (0,-3.5mm) $) to (l12);
      \draw ($ (t-4-10.north east) + (0,-3.5mm) $) to (l13);
    \end{scope}
  \end{tikzpicture}
  \]
  In this case, $D(\alpha) = \set{4,8,10}$.

  A filling of such a quasi-ribbon tableau by symbols from $\aA_n$ is weakly increasing from upper left to lower right,
  and is specified exactly by listing $n-1$ boundaries between adjacent cells where the increase from $i$ to $i+1$
  occurs in this filling. Note that such a list may contain repeated entries (and is thus formally a multiset),
  indicating that the difference between the entries in the cells incident on this boundary differ by more than $1$. For
  example, for $n=9$ the filling
  \[
  \begin{tikzpicture}
    \matrix[tableaumatrix] (t) {
      2 \& 4 \& 4 \& 4 \\
      \&\&\&5 \&5 \&6 \&7 \\
      \&\&\&\&\&\&8 \&8 \\
      \&\&\&\&\&\&\&9 \&9 \&9 \\
    };
    \begin{scope}[every node/.append style={font=\scriptsize,inner sep=0.25mm}]
      \node (l0) at ($ (t-1-1.north west) + (0,3mm) $) {$0$};
      \node (l1) at ($ (t-1-2.north west) + (0,3mm) $) {$1$};
      \node (l2) at ($ (t-1-3.north west) + (0,3mm) $) {$2$};
      \node (l3) at ($ (t-1-4.north west) + (0,3mm) $) {$3$};
      \node (l4) at ($ (t-2-4.north west) + (-3mm,-3mm) $) {$4$};
      \node (l5) at ($ (t-2-5.south west) + (0,-3mm) $) {$5$};
      \node (l6) at ($ (t-2-6.north west) + (0,3mm) $) {$6$};
      \node (l7) at ($ (t-2-7.north west) + (0,3mm) $) {$7$};
      \node (l8) at ($ (t-3-7.north west) + (-3mm,-3mm) $) {$8$};
      \node (l9) at ($ (t-3-8.north west) + (3mm,3mm) $) {$9$};
      \node (l10) at ($ (t-4-8.north west) + (-3mm,-3mm) $) {$10$};
      \node (l11) at ($ (t-4-9.south west) + (0mm,-3mm) $) {$11$};
      \node (l12) at ($ (t-4-10.north west) + (0,3mm) $) {$12$};
      \node (l13) at ($ (t-4-10.north east) + (0,3mm) $) {$13$};
    \end{scope}
    \begin{scope}[line width=.6pt]
      \draw ($ (t-1-1.north west) + (0,-3.5mm) $) to (l0);
      \draw ($ (t-1-2.north west) + (0,-3.5mm) $) to (l1);
      \draw ($ (t-1-3.north west) + (0,-3.5mm) $) to (l2);
      \draw ($ (t-1-4.north west) + (0,-3.5mm) $) to (l3);
      \draw ($ (t-2-4.north west) + (3.5mm,0) $) to (t-2-4.north west) to (l4);
      \draw ($ (t-2-5.south west) + (0,3.5mm) $) to (l5);
      \draw ($ (t-2-6.north west) + (0,-3.5mm) $) to (l6);
      \draw ($ (t-2-7.north west) + (0,-3.5mm) $) to (l7);
      \draw ($ (t-3-7.north west) + (3.5mm,0) $) to (t-3-7.north west) to (l8);
      \draw ($ (t-3-8.north west) + (0,-3.5mm) $) to (t-3-8.north west) to (l9);
      \draw ($ (t-4-8.north west) + (3.5mm,0) $) to (t-4-8.north west) to (l10);
      \draw ($ (t-4-9.south west) + (0,3.5mm) $) to (l11);
      \draw ($ (t-4-10.north west) + (0,-3.5mm) $) to (l12);
      \draw ($ (t-4-10.north east) + (0,-3.5mm) $) to (l13);
    \end{scope}
  \end{tikzpicture}
  \]
  corresponds to the multiset
  \[
  \mset{0,1,1,4,6,7,8,10}.
  \]
  Note that this multiset has length $n-1 = 8$ and contains the entry $0$, indicating the presence of $2$ in the first
  cell, and a repeated entry $1$, indicating the jump from $2$ to $4$ at this boundary. However, some of the entries in
  this multiset are forced by the shape of the tableau: there must be increases at boundaries $4$, $8$, and $10$, which
  are between vertically adjacent cells.

  There is thus a one-to-one correspondence between multisets with $n-1$ elements drawn from $\set{0,\ldots,\abs{\alpha}}$ and
  that contain $D(\alpha)$ (which indicates the position of the `forced' increases) and fillings of a tableau of shape
  $\alpha$. Since $|D(\alpha)| = \clen\alpha - 1$, the number of such multisets is $0$ if $\clen\alpha > n$, and is
  otherwise the number of multisets with $(n-1)-(\clen\alpha-1)$ elements drawn from $\set{0,\ldots,\abs{\alpha}}$, which is
  $\binom{(\abs{\alpha}+1)+((n-1)-(\clen\alpha-1))-1}{(n-1)-(\clen\alpha-1)}$ by the standard formula for the number of multisets
  \cite[\S~1.2]{stanley_enumerative1}, which simplifies to $\binom{n+\abs{\alpha}-\clen\alpha}{n-\clen\alpha}$.
\end{proof}

\subsection{Interaction of the crystal and quasi-crystal graphs}

This section examines the interactions of the crystal graph $\Gamma(\plac_n)$ and quasi-crystal graph
$\Gamma(\hypo_n)$. The first, and most fundamental, observation, is how connected components in $\Gamma(\plac_n)$ are
made up of connected components in $\Gamma(\hypo_n)$:

\begin{proposition}
  The vertex set of every connected component of $\Gamma(\plac_n)$ is a union of vertex sets of connected components of
  $\Gamma(\hypo_n)$.
\end{proposition}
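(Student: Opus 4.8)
The plan is to observe that $\Gamma(\hypo_n)$ is a spanning subgraph of $\Gamma(\plac_n)$ and then deduce the statement by a routine partition argument. The essential input is \fullref{Proposition}{prop:keddefinedimpliesefdefined}, which says that the quasi-Kashiwara operators are restrictions of the Kashiwara operators: whenever $\f_i(u)$ is defined, so is $\kf_i(u)$, and they agree. Both $\Gamma(\hypo_n)$ and $\Gamma(\plac_n)$ have vertex set $\aA_n^*$, and an edge from $u$ to $u'$ labelled by $i$ in $\Gamma(\hypo_n)$ is by definition the assertion $\f_i(u) = u'$; by that proposition this forces $\kf_i(u) = u'$, which is an edge from $u$ to $u'$ labelled by $i$ in $\Gamma(\plac_n)$. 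Hence every edge of $\Gamma(\hypo_n)$ is an edge of $\Gamma(\plac_n)$ (with the same label).

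First I would record this consequence: $\Gamma(\hypo_n)$ and $\Gamma(\plac_n)$ share the vertex set $\aA_n^*$, and the edge set of $\Gamma(\hypo_n)$ is contained in that of $\Gamma(\plac_n)$. It follows that any two vertices joined by a path in $\Gamma(\hypo_n)$ are also joined by a path in $\Gamma(\plac_n)$, so the vertex set of every connected component of $\Gamma(\hypo_n)$ is contained in the vertex set of a single connected component of $\Gamma(\plac_n)$.

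Then I would finish with the partition argument. The connected components of $\Gamma(\hypo_n)$ partition $\aA_n^*$, as do those of $\Gamma(\plac_n)$. Fix a connected component $C$ of $\Gamma(\plac_n)$. For each $w$ in the vertex set of $C$, the word $w$ lies in a unique connected component $\Gamma(\hypo_n,w)$, whose vertex set, by the previous paragraph, is contained in the vertex set of some connected component of $\Gamma(\plac_n)$; since this component contains $w$, it must be $C$. Hence the vertex set of $C$ is the union over $w \in C$ of the vertex sets of the $\Gamma(\hypo_n,w)$, exhibiting it as a union of vertex sets of connected components of $\Gamma(\hypo_n)$.

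I do not anticipate any genuine obstacle; the only point needing a word of care is that ``connected component'' for these labelled digraphs is to be read in the undirected sense (edges may be traversed in either direction when forming components), so that containment of edge sets immediately yields containment of components. This matches the usage established for $\Gamma(\plac_n)$ in \fullref{Subsection}{subsec:kashiwara} and for $\Gamma_n = \Gamma(\hypo_n)$ in \fullref{Section}{sec:quasikashiwara}.
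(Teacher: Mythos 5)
Your proof is correct and follows the same route as the paper: both invoke Proposition~\ref{prop:keddefinedimpliesefdefined} to see that every edge of $\Gamma(\hypo_n)$ is an edge of $\Gamma(\plac_n)$, so each connected component of $\Gamma(\hypo_n)$ lies inside a single component of $\Gamma(\plac_n)$, and the partition of the vertex set by $\Gamma(\hypo_n)$-components then gives the claim. You simply spell out in full the partition argument the paper compresses into ``the result follows immediately.''
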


\begin{proof}
  By \fullref{Proposition}{prop:keddefinedimpliesefdefined}, any edge in $\Gamma(\hypo_n)$ (whose edges indicate the
  action of the quasi-Kashiwara operators $\e_i$ and $\f_i$) is also an edge in $\Gamma(\plac_n)$ (whose edges indicate
  the action of the Kashiwara operators $\ke_i$ and $\kf_i$). Hence every connected component in $\Gamma(\hypo_n)$ lies
  entirely within a connected component of $\Gamma(\plac_n)$; the result follows immediately.
\end{proof}

For example, as shown in \fullref{Figure}{fig:gamma4partisom1}, the connected component $\Gamma(\plac_4,2111)$ is made up of
the three connected components $\Gamma(\hypo_4,2111)$, $\Gamma(\hypo_4,2112)$, and $\Gamma(\hypo_4,2122)$.

\begin{proposition}
  \label{prop:isomorphismsrestrict}
  Let $\Theta : \Gamma(\plac_n,u) \to \Gamma(\plac_n,\Theta(u))$ be a crystal isomorphism. Then $\Theta$ restricts to a
  quasi-crystal isomorphism from $\Gamma(\hypo_n,w)$ to $\Gamma(\hypo_n,\Theta(w))$, for all $w \in \Gamma(\plac_n,u)$.
\end{proposition}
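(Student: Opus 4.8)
The plan is to check that the set map $\psi$ obtained by restricting $\Theta$ to the vertices of $\Gamma(\hypo_n,w)$ is a quasi-crystal isomorphism onto $\Gamma(\hypo_n,\Theta(w))$. First I would observe that the restriction makes sense: since $w \in \Gamma(\plac_n,u)$ we have $\Gamma(\plac_n,w) = \Gamma(\plac_n,u)$, and because every connected component of $\Gamma(\hypo_n)$ lies inside a connected component of $\Gamma(\plac_n)$ (the preceding proposition), $\Gamma(\hypo_n,w)$ is contained in $\Gamma(\plac_n,u)$, so $\Theta$ is defined on every vertex of $\Gamma(\hypo_n,w)$; it is automatically injective and weight-preserving there. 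Everything then reduces to two things: that $\Theta$ (and $\Theta^{-1}$) carry the $i$-labelled edges of $\Gamma(\hypo_n)$ lying in these components to $i$-labelled edges of $\Gamma(\hypo_n)$, and that the image of $\Gamma(\hypo_n,w)$ is exactly $\Gamma(\hypo_n,\Theta(w))$.

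The one substantive point is the following claim: for every $v \in \Gamma(\plac_n,u)$ and every $i \in \set{1,\ldots,n-1}$, the word $v$ is $i$-inversion-free if and only if $\Theta(v)$ is $i$-inversion-free. I would prove this by recognising $i$-inversion-freeness as a condition visible to the crystal structure, namely: $v$ is $i$-inversion-free if and only if $\kfcount_i(v) = \abs{v}_i$. Indeed, in the computation of the Kashiwara operators via the bracketing rule (\fullref{Subsection}{subsec:computingkashiwara}) one replaces each $i$ by ${+}$, each $i+1$ by ${-}$, deletes factors ${-}{+}$ until a word ${+}^{\kfcount_i(v)}{-}^{\kecount_i(v)}$ remains; each deletion strictly decreases the number of ${+}$'s, which starts at $\abs{v}_i$, so $\kfcount_i(v) = \abs{v}_i$ holds exactly when no deletion occurs, i.e.\ exactly when every ${-}$ follows every ${+}$, i.e.\ exactly when $v$ is $i$-inversion-free. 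Now $\kfcount_i(v)$ is the length of the longest $i$-labelled path of $\Gamma(\plac_n)$ starting at $v$, hence is preserved by the labelled-digraph isomorphism $\Theta$, and $\abs{v}_i$ is a component of $\wt{v}$, hence preserved because $\Theta$ is weight-preserving; so $\kfcount_i(\Theta(v)) = \kfcount_i(v) = \abs{v}_i = \abs{\Theta(v)}_i$, which gives the claim.

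With the claim in hand I would finish as follows. Suppose $v \in \Gamma(\hypo_n,w)$ and $\f_i(v) = v'$. By \fullref{Proposition}{prop:keddefinedimpliesefdefined}, $\kf_i(v) = v'$, so $v \to v'$ is an $i$-edge of $\Gamma(\plac_n)$; as $\Theta$ is a crystal isomorphism, $\kf_i(\Theta(v)) = \Theta(v')$. Since $\f_i(v)$ is defined, $v$ is $i$-inversion-free and $\abs{v}_i \geq 1$; by the claim $\Theta(v)$ is $i$-inversion-free, and $\abs{\Theta(v)}_i = \abs{v}_i \geq 1$, so $\f_i(\Theta(v))$ is defined, and by \fullref{Proposition}{prop:keddefinedimpliesefdefined} again it equals $\kf_i(\Theta(v)) = \Theta(v')$. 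Thus $\Theta$ sends $i$-edges of $\Gamma(\hypo_n)$ to $i$-edges of $\Gamma(\hypo_n)$; running the same argument with the crystal isomorphism $\Theta^{-1}$ gives the converse (and edges described via $\e_i$ are handled automatically, since by \fullref{Lemma}{lem:efinverse} they are the same edges). Because $\Gamma(\hypo_n,w)$ is connected and $\Theta(w)$ lies in $\Gamma(\hypo_n,\Theta(w))$, the image $\Theta(\Gamma(\hypo_n,w))$ is a connected subset of $\Gamma(\hypo_n)$ containing $\Theta(w)$, hence is contained in $\Gamma(\hypo_n,\Theta(w))$; applying the same reasoning to $\Theta^{-1}$ shows this containment is an equality. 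Hence $\psi$ is a bijective, weight-preserving, labelled-digraph isomorphism from $\Gamma(\hypo_n,w)$ onto $\Gamma(\hypo_n,\Theta(w))$, i.e.\ a quasi-crystal isomorphism. The only real obstacle is the claim of the second paragraph — identifying $i$-inversion-freeness with the crystal-invariant equality $\kfcount_i(v) = \abs{v}_i$; once that is in place the rest is transport of structure along $\Theta$.
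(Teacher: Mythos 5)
Your proof is correct, but it reaches the key fact --- that $\Theta$ preserves $i$-inversion-freeness --- by a genuinely different route from the paper. The paper argues externally: since $\Theta$ is a crystal isomorphism, $v \placcong \Theta(v)$, and then it checks by hand that each Knuth relation in $\drel{R}_\plac$ preserves the property of having an $i$-inversion (the only relations that can commute an $i$ past an $i+1$ have $i$-inversions on both sides). This leans on the equivalence of the presentation definition (P1) and the crystal definition (P3) of $\placcong$. You instead characterize $i$-inversion-freeness \emph{intrinsically} in terms of data that a crystal isomorphism manifestly preserves: $v$ is $i$-inversion-free if and only if $\kfcount_i(v) = \abs{v}_i$, where the left side is the length of the longest $i$-labelled path out of $v$ (preserved because $\Theta$ is a labelled digraph isomorphism) and the right side is a component of $\wt{v}$ (preserved because $\Theta$ is weight-preserving). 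Your justification of this equivalence via the bracketing rule is sound --- the one micro-step you elide, that any $\{{+},{-}\}$-word containing a ${-}$ somewhere left of a ${+}$ must contain an \emph{adjacent} factor ${-}{+}$, is immediate by taking a closest such pair. What your approach buys is self-containment within the crystal-graph formalism (no appeal to the Knuth relations or to P1 $\Leftrightarrow$ P3); what the paper's buys is brevity, given that it freely cites those equivalences. Your closing bookkeeping --- transporting edges via \fullref{Proposition}{prop:keddefinedimpliesefdefined}, running the argument for $\Theta^{-1}$ to get the converse, and pinning down the image component by connectedness in both directions --- is more explicit than the paper's and entirely correct.
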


\begin{proof}
  Let $\Theta : \Gamma(\plac_n,u) \to \Gamma(\plac_n,\Theta(u))$ be a crystal isomorphism and let
  $w \in \Gamma(\plac_n,u)$. Suppose the Kashiwara operator $\kf_i$ is defined on $w$ but the quasi-Kashiwara operator
  $\f_i$ is not defined on $w$. Since $\kf_i$ is defined, $w$ must contain at least one symbol $i$. Thus, since $\f_i$
  is undefined, $w$ must have an $i$-inversion. Since $\Theta$ is a crystal isomorphism, $w \placcong \Theta(w)$. The
  defining relations in $\drel{R}_\plac$ preserve the property of having $i$-inversions (for if a defining relation in
  $\drel{R}_\plac$ commutes a symbol $i$ and $i+1$, then $a = i$ and $c= i+1$ in this defining relation, and so
  $b \in \set{i,i+1}$, and so the applied relation is either $(i(i+1)i,(i+1)ii)$ or $((i+1)i(i+1),(i+1)(i+1)i)$, and
  both sides of these relations have $i$-inversions). Hence $\Theta(w)$ has an $i$-inversion. Hence
  $\f_i$ is not defined on $w$.

  A symmetric argument shows that if $\kf_i$ is defined on $\Theta(w)$ but $\f_i$ is not, then $\f_i$ is not defined on
  $w$. Hence the isomorphism $\Theta$ maps edges corresponding to actions of quasi-Kashiwara operators in
  $\Gamma(\plac_n,u)$ to edges corresponding to actions of quasi-Kashiwara operators in $\Gamma(\plac_n,\Theta(u))$, and
  vice versa, and so restricts to a quasi-crystal isomorphism from $\Gamma(\hypo_n,w)$ to $\Gamma(\hypo_n,\Theta(w))$
  for all $w \in \Gamma(\plac_n,u)$.
\end{proof}

\begin{figure}[t]
  \centering
  \includegraphics{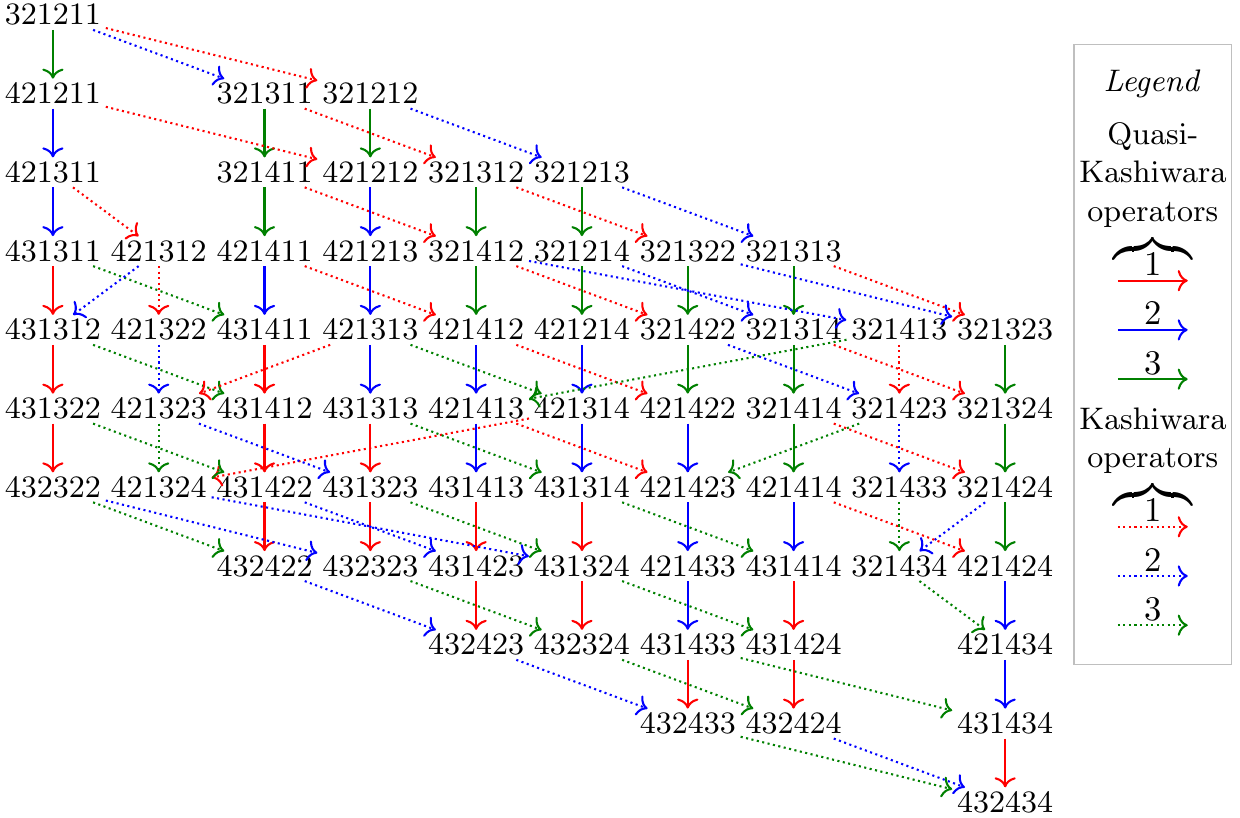}
  \caption{The connected component $\Gamma(\plac_4,321211)$, drawn so that the connected components of $\Gamma(\hypo_4)$
    are arranged vertically. Notice that it contains the isomorphic connected components $\Gamma(\hypo_4,321213)$ and
    $\Gamma(\hypo_4,321312)$, and also the isomorphic one-vertex connected components $\Gamma(\hypo_4,421323)$ and
    $\Gamma(\hypo_4,321423)$.}
  \label{fig:gammaplac4containinggammahypo4}
\end{figure}

Notice that it is possible for a single connected component of $\Gamma(\plac_n)$ to contain distinct isomorphic
connected components of $\Gamma(\hypo_n)$. For example, as shown in
\fullref{Figure}{fig:gammaplac4containinggammahypo4}, the connected component $\Gamma(\plac_4,321211)$ contains the
isomorphic connected components $\Gamma(\hypo_4,321213)$ and $\Gamma(\hypo_4,321312)$.

Note also that $\Gamma(\plac_4,321211)$, contains the isomorphic one-vertex connected components
$\Gamma(\hypo_4,421323)$ and $\Gamma(\hypo_4,321423)$. (There are other one-vertex components of $\Gamma(\hypo_4)$
inside $\Gamma(\plac_4,321211)$, but these have different weights and so there are no quasi-crystal isomorphisms between
them.) The quasi-ribbon tableau $\qrtableau{421323}$ has shape $(1,2,2,1)$; thus, by
\fullref{Theorem}{thm:hypoplacticclasssize}, the hypoplactic class containing $421323$ has size
\begin{align*}
\frac{6!}{1!2!2!1!}
&- \frac{6!}{3!2!1!} - \frac{6!}{1!4!1!} - \frac{6!}{1!2!3!} \\
&+ \frac{6!}{5!1!} + \frac{6!}{3!3!} + \frac{6!}{1!5!} \\
&- \frac{6!}{6!} = % 180 - 2*60 - 30 + 2*6 + 20 - 1
61.
\end{align*}
Since there are an odd number of components that are isomorphic to $\Gamma(\hypo_4,421323)$, it follows that there must
be at least one component of $\Gamma(\plac_4)$ that contains an odd number of these components. Thus different
components of $\Gamma(\plac_4)$ may contain different numbers of isomorphic components of $\Gamma(\hypo_4)$.

\begin{corollary}
  \label{corol:qrtcomponentsindifferentplaces}
  Let $u,v \in \aA_n^*$ be such that $u \placcong v$ but $u \neq v$, so that there is a non-trivial crystal isormorphism
  $\Theta : \Gamma(\plac_n,u) \to \Gamma(\plac_n,v)$ with $\Theta(u) = v$. Let $s \in \Gamma(\plac_n,u)$ and
  $t \in \Gamma(\plac_n,v)$ be quasi-ribbon words. Then $\Theta$ does not map $\Gamma(\hypo_n,s)$ to
  $\Gamma(\hypo_n,t)$. More succinctly, quasi-ribbon word components of $\Gamma(\hypo_n)$ cannot lie in the same places
  in distinct isomorphic components of $\Gamma(\plac_n)$.
\end{corollary}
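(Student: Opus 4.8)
The plan is to argue by contradiction: use the uniqueness of highest-weight quasi-ribbon words to force $\Theta$ to fix a vertex, and then use the rigidity of crystal components to force $\Theta$ to be the identity. So I would suppose, for contradiction, that $\Theta$ maps $\Gamma(\hypo_n,s)$ onto $\Gamma(\hypo_n,t)$. By \fullref{Proposition}{prop:isomorphismsrestrict}, the restriction of $\Theta$ to $\Gamma(\hypo_n,s)$ is a quasi-crystal isomorphism onto $\Gamma(\hypo_n,\Theta(s))$; and since $\Theta(s) \in \Gamma(\hypo_n,t)$, that image component is exactly $\Gamma(\hypo_n,t)$.

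First I would locate the highest-weight words. Since $s$ is a quasi-ribbon word, \fullref{Corollary}{corol:qrwsameshapecomponent} shows every word of $\Gamma(\hypo_n,s)$ is a quasi-ribbon word, so by \fullref[(1)]{Proposition}{prop:charhighestweighttableau} this component is the set of quasi-ribbon words of some shape $\alpha$, and by \fullref[(2)]{Proposition}{prop:charhighestweighttableau} it has a unique highest-weight word $\hat{s}$, which is itself a quasi-ribbon word; likewise $\Gamma(\hypo_n,t)$ has a unique highest-weight quasi-ribbon word $\hat{t}$. A highest-weight word is precisely a vertex with no incoming edge, so the quasi-crystal isomorphism $\Theta|_{\Gamma(\hypo_n,s)}$ must send $\hat{s}$ to $\hat{t}$; moreover $\wt{\hat{s}} = \wt{\hat{t}}$ because $\Theta$, being a crystal isomorphism, preserves weight.

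Next I would collapse this to a fixed point. Now $\hat{s}$ and $\hat{t}$ are highest-weight quasi-ribbon words of equal weight, so \fullref{Corollary}{corol:qrthighestweightsameweightequal} forces $\hat{s} = \hat{t} = \Theta(\hat{s})$. Thus $\Theta$ fixes the vertex $\hat{s}$. By \fullref{Proposition}{prop:keddefinedimpliesefdefined}, every edge of $\Gamma(\hypo_n)$ is an edge of $\Gamma(\plac_n)$, so $\Gamma(\hypo_n,s) \subseteq \Gamma(\plac_n,s) = \Gamma(\plac_n,u)$, giving $\hat{s} \in \Gamma(\plac_n,u)$; as $\hat{s}$ is fixed by $\Theta$, it also lies in $\Gamma(\plac_n,v)$. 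Since distinct connected components are disjoint, $\Gamma(\plac_n,u) = \Gamma(\plac_n,v) =: C$, so $\Theta$ is a crystal automorphism of $C$ fixing $\hat{s}$.

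The final step, which I expect to be the crux, is to deduce $\Theta = \id$. Since $\Theta$ is a labelled-digraph isomorphism it intertwines every $\ke_i$ and $\kf_i$ (it carries $i$-labelled edges to $i$-labelled edges), so the set of its fixed points is closed under these operators wherever they are defined; viewed inside $C$, this set is non-empty and closed under passing along edges in either direction, hence is a union of connected components of $C$ and therefore all of $C$. Thus $\Theta = \id_C$, contradicting that $\Theta$ is non-trivial (equivalently, that $\Theta(u) = v$ with $u \neq v$), and the claim follows. Everything before this last step is bookkeeping that chains \fullref{Proposition}{prop:isomorphismsrestrict}, \fullref{Corollary}{corol:qrwsameshapecomponent}, \fullref{Corollary}{corol:qrthighestweightsameweightequal} and \fullref[(2)]{Proposition}{prop:charhighestweighttableau} in the right order; the one genuinely structural ingredient is the rigidity statement that a crystal automorphism of a connected component with a fixed vertex is the identity, and making that step airtight is the main obstacle.
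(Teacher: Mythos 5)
Your proof is correct and follows essentially the same route as the paper's: reduce to the unique highest-weight quasi-ribbon words of the two components, use weight-preservation together with Corollary~\ref{corol:qrthighestweightsameweightequal} to force them to coincide, and conclude that $\Theta$ fixes a vertex and is therefore trivial. The only difference is that you spell out the final rigidity step (a crystal isomorphism of connected components fixing a vertex is the identity), which the paper asserts without elaboration; your justification of it is sound.
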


\begin{proof}
  Without loss of generality, assume $s$ and $t$ are highest-weight in $\Gamma(\hypo_n,s)$ and $\Gamma(\hypo_n,t)$
  respectively. Suppose, with the aim of obtaining a contradiction, that $\Theta$ maps $\Gamma(\hypo_n,s)$ to
  $\Gamma(\hypo_n,t)$. Then $\Theta(s) = t$, and so $s \hypocong t$. Since $s$ and $t$ are quasi-ribbon words, $s=t$ by
  \fullref{Proposition}{prop:charhighestweighttableau} and so $\Theta$ is trivial, which is a contradiction.
\end{proof}

\fullref{Corollary}{corol:qrwsameshapecomponent} showed that the quasi-Kashiwara operators preserve shapes of
quasi-ribbon tableau. In fact, quasi-Kashiwara operators and, more generally, Kashiwara operators, preseve shapes of
quasi-ribbon tabloids (see \fullref{Section}{sec:quasiribbontableau} for the definitions of quasi-ribbon tabloids):

\begin{proposition}
  \label{prop:kekfpreserveqrtshapes}
  Let $i \in \set{1,\ldots,n-1}$. Let $u \in \aA_n^*$.
  \begin{enumerate}
  \item If the Kashiwara operator $\ke_i$ is defined on $u$, then $\qrtabloid{\ke_i(u)}$ and $\qrtabloid{u}$ have the
    same shape.
  \item If the Kashiwara operator $\kf_i$ is defined on $u$, then $\qrtabloid{\kf_i(u)}$ and $\qrtabloid{u}$ have the
    same shape.
  \end{enumerate}
\end{proposition}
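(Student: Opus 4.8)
The plan is to reduce the statement to one about the descent set of the underlying word. The first step is to observe that the shape of the quasi-ribbon tabloid $\qrtabloid{w}$ is completely determined by $|w|$ together with the set $\{\, h : w_h > w_{h+1} \,\}$: the columns of $\qrtabloid{w}$ are precisely the maximal decreasing factors of $w$, two consecutive letters $w_h, w_{h+1}$ lie in a single column exactly when $w_h > w_{h+1}$, and the sequence of column heights determines the ribbon shape (and conversely). So it suffices to show that $\ke_i$ and $\kf_i$, when defined, preserve $|w|$ (which is clear, since each changes a single letter) and preserve this set of descent positions.

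The second step uses the bracketing description of the Kashiwara operators from \fullref{Subsection}{subsec:computingkashiwara}: $\ke_i$ changes one letter $i+1$ into $i$, namely the one producing the leftmost $-$ that survives after deleting factors ${-}{+}$ from the $\pm$-word of $u$, and symmetrically $\kf_i$ changes one letter $i$ into $i+1$, the one producing the rightmost surviving $+$. Writing $h$ for the position of this letter, a short case analysis shows that replacing $u_h$ (by $i$ in the first case, by $i+1$ in the second) can only change the descent status of positions $h-1$ and $h$, and that it changes neither of these provided $u_{h-1} \ne i+1$ (so that $u_{h-1} > i+1 \iff u_{h-1} > i$) and $u_{h+1} \ne i$ (so that $i+1 > u_{h+1} \iff i > u_{h+1}$). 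So the crux is to prove that the position $h$ singled out by $\ke_i$ (resp.\ $\kf_i$) satisfies $u_{h-1} \ne i+1$ and $u_{h+1} \ne i$.

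This last point is the main obstacle, and I would handle it by bracket matching: deleting ${-}{+}$ is exactly cancellation of a matched opening/closing pair, so the surviving $-$'s are the unmatched openings and the surviving $+$'s the unmatched closings. For $\ke_i$: if $u_{h+1} = i$ then $u_{h+1}$ contributes a $+$ immediately after the $-$ contributed by $u_h$ (the two letters are adjacent in $u$ and both lie in $\{i, i+1\}$), so that $-$ is matched, contradicting its survival; if $u_{h-1} = i+1$ then $u_{h-1}$ contributes a $-$ immediately before, hence lies lower on the matching stack than the $-$ from $u_h$, and either it survives --- contradicting that $h$'s $-$ is the leftmost survivor --- or it is eventually matched, which forces $u_h$'s $-$ to be matched first, again a contradiction. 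The argument for $\kf_i$ is the mirror image, using the rightmost surviving $+$ and interchanging the roles of openings and closings. Combining the steps, $\ke_i$ and $\kf_i$ preserve the descent set and hence the shape of the quasi-ribbon tabloid, which in particular generalizes \fullref{Corollary}{corol:qrwsameshapecomponent}.
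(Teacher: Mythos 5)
Your proof is correct and is essentially the paper's argument in a different packaging: the paper works directly with the factorization into maximal decreasing factors (the columns of the tabloid) and rules out the two problematic configurations --- the modified letter having an $i$ immediately after it or an $i+1$ immediately before it --- by exactly the same appeal to the $\pm$-bracketing rule that you make. Your reformulation via the descent set $\set{h : u_h > u_{h+1}}$ tidies the case analysis into the two conditions $u_{h-1} \neq i+1$ and $u_{h+1} \neq i$, but the substance, including the nesting/matching argument for the leftmost surviving $-$ (resp.\ rightmost surviving $+$), is the same.
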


\begin{proof}
  Let $u \in \aA_n^*$ and let $u = u^{(1)}\cdots u^{(m)}$ be the factorization of $u$ into maximal decreasing factors
  (which are entries of the columns of $\qrtabloid{u}$).

  Suppose that the Kashiwara operator $\ke_i$ is defined on $u$, and that the application of $\ke_i$ to $u$ replaces the
  (necessarily unique) symbol $i+1$ in $u^{(k)}$ by a symbol $i$; let $\hat{u}^{(k)}$ be the result of this
  replacement. Then $u^{(k)}$ cannot contain a symbol $i$, for if it did, then during the computation of the action of
  $\ke_i$ as described in \fullref{Subsection}{subsec:computingkashiwara}, the symbols $i+1$ and $i$ in $u^{(k)}$ (which
  would be adjacent since $u^{(k)}$ is strictly decreasing) would have been replaced by $-$ and $+$ and so would have
  been deleted, and so $\ke_i$ would not act on this symbol $i+1$. Hence $\hat{u}^{(k)}$ is also a decreasing word.

  Furthermore, the first symbol of $u^{(k+1)}$ is greater than or equal to the last symbol of $u^{(k)}$, and so is
  certainly greater than or equal to the last symbol of $\hat{u}^{(k)}$ since $\ke_i$ can only decrease a
  symbol.

  Similarly, the first symbol of $u^{(k)}$ is greater than or equal to the last symbol of $u^{(k-1)}$. If $u^{(k)}$
  does not start with the symbol $i+1$, the first symbol of $\hat{u}^{(k)}$ is greater than or equal to the last symbol of
  $u^{(k-1)}$. So assume $u^{(k)}$ starts with the symbol $i+1$; since the factorization is into maximal decreasing
  factors, $u^{(k-1)}$ ends with a symbol that is less than or equal to $i+1$. If $u^{(k-1)}$ ends with a symbol that
  is strictly less than $i+1$, then the first symbol of $\hat{u}^{(k)}$ is greater than or equal to the last symbol of
  $u^{(k-1)}$. So assume $u^{(k-1)}$ ends with the symbol $i+1$. Then during the computation of the action of $\ke_i$ as
  described in \fullref{Subsection}{subsec:computingkashiwara}, the adjacent symbols $i+1$ at the end of $u^{(k-1)}$ and
  at the start of $u^{(k)}$ are both replaced by symbols $-$, and neither of these symbols are removed by deletion of
  factors ${-}{+}$, since $\ke_i$ acts on the symbol $i+1$ at the start of $u^{(k)}$. But this contradicts the fact
  that $\ke_i$ acts on the symbol replaced by the leftmost ${-}$. Thus this case cannot arise, and so one of the
  previous possibilities must have held true.

  Combining the last three paragraphs shows that the factorization of $\ke_i(u)$ into maximal decreasing factors is
  $\ke_i(u) = u^{(1)}\cdots u^{(k-1)}\hat{u}^{(k)}u^{(k+1)}\cdots u^{(m)}$. This proves part~(1). Similar reasoning for
  $\kf_i$ proves part~(2).
\end{proof}

\begin{figure}[t]
  \centering
  \includegraphics{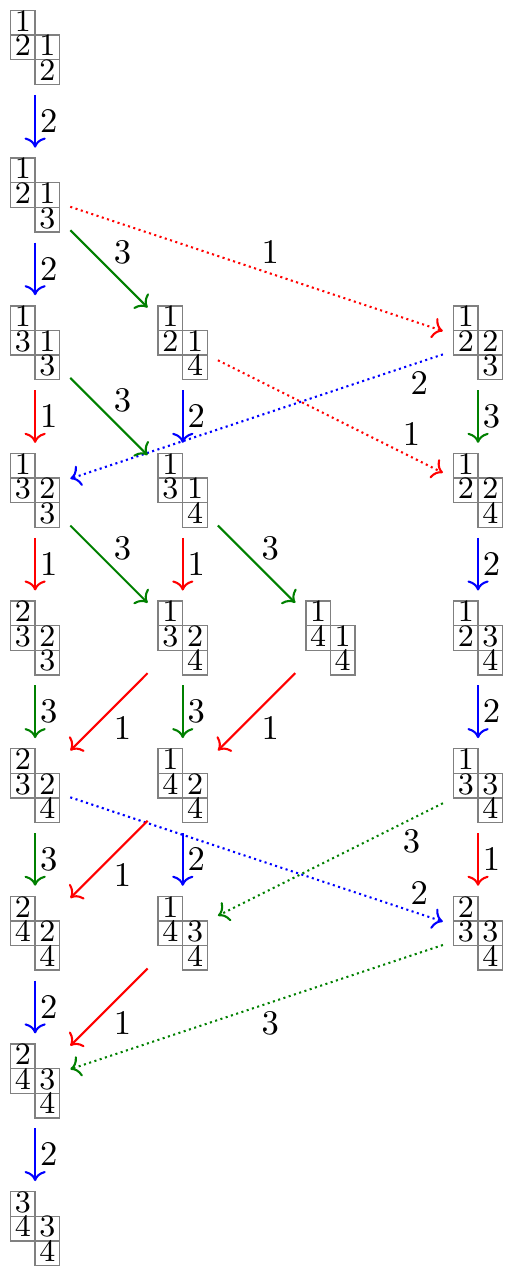}
  \caption{The component $\Gamma(\plac_4,2121)$, with words drawn as quasi-ribbon tabloids of the same shape. Only the
    component $\Gamma(\hypo_4,2132)$ consists of quasi-ribbon words, which thus appear here as quasi-ribbon tableaux.}
\end{figure}

\begin{proposition}
  \label{prop:placccomponentcontainsatmostonehypoqrwcomponent}
  A connected component of $\Gamma(\plac_n)$ contains at most one quasi-ribbon word component of $\Gamma(\hypo_n)$.
\end{proposition}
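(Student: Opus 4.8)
The plan is to attach to each connected component $C$ of $\Gamma(\plac_n)$ a single composition --- the shape of the quasi-ribbon tabloid $\qrtabloid{w}$ of any word $w$ lying in $C$ --- and then to show that if $A$ and $B$ are connected components of $\Gamma(\hypo_n)$ consisting entirely of quasi-ribbon words, both lying inside $C$, then this composition is forced to coincide with the common shape of the quasi-ribbon tableaux defining $A$ and with the common shape of those defining $B$. Hence those two shapes agree, and $A = B$.

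Concretely, I would first invoke \fullref{Corollary}{corol:qrwsameshapecomponent} together with \fullref[(1)]{Proposition}{prop:charhighestweighttableau} to say that $A$ is precisely the set of quasi-ribbon words whose quasi-ribbon tableau has some fixed shape $\alpha$, and $B$ the set of quasi-ribbon words whose quasi-ribbon tableau has some fixed shape $\beta$. Next, for a quasi-ribbon word $w$ one has $\qrtabloid{w} = \qrtableau{w}$ --- exactly as $\tabloid{w} = \P{w}$ for tableau words, since both sides are quasi-ribbon tableaux with column reading $w$, and the maximal decreasing factors of the column reading of a quasi-ribbon tableau recover its columns (consecutive columns of a quasi-ribbon tableau share a common row, so the top entry of one column is weakly below the bottom entry of the next). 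Consequently $\qrtabloid{a}$ has shape $\alpha$ for every $a \in A$ and $\qrtabloid{b}$ has shape $\beta$ for every $b \in B$. The crucial ingredient is then \fullref{Proposition}{prop:kekfpreserveqrtshapes}: since $\ke_i$ and $\kf_i$ preserve the shape of the associated quasi-ribbon tabloid, and every edge of $\Gamma(\plac_n)$ is an action of some $\kf_i$ (equivalently of some $\ke_i$), the shape of $\qrtabloid{w}$ depends only on the connected component of $\Gamma(\plac_n)$ that contains $w$. Applying this to $C$, which contains both $a \in A$ and $b \in B$, yields $\alpha = \beta$; and then \fullref[(1)]{Proposition}{prop:charhighestweighttableau} gives $A = B$, because there is exactly one component of $\Gamma(\hypo_n)$ made up of quasi-ribbon words of a given shape.

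I do not expect a genuine obstacle: the argument is short and rests entirely on \fullref{Proposition}{prop:kekfpreserveqrtshapes} (constancy of the quasi-ribbon tabloid shape along crystal edges) and on the combinatorial description of quasi-ribbon-word components in \fullref{Proposition}{prop:charhighestweighttableau}. The only point that needs a moment's care is the identity $\qrtabloid{w} = \qrtableau{w}$ for quasi-ribbon words $w$, which is the direct analogue of a fact already recorded earlier for tableau words and the map $\tabloid{{\cdot}}$, and should be stated as such rather than re-proved in detail.
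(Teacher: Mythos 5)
Your proof is correct and follows essentially the same route as the paper's: in both, the decisive ingredient is \fullref{Proposition}{prop:kekfpreserveqrtshapes}, which forces the quasi-ribbon tableaux attached to the two hypothetical quasi-ribbon word components to have the same shape. The remaining differences are cosmetic: you finish by citing \fullref[(1)]{Proposition}{prop:charhighestweighttableau} (one component per shape) where the paper passes through highest-weight words via \fullref{Corollaries}{corol:charhighestweighttableaubyshapes} and \ref{corol:qrthighestweightsameweightequal}, and you spell out the identification $\qrtabloid{w} = \qrtableau{w}$ for quasi-ribbon words $w$, which the paper uses implicitly.
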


\begin{proof}
  Suppose the connected component $\Gamma(\plac_n,u)$ contains connected components $\Gamma(\hypo_n,w)$ and
  $\Gamma(\hypo_n,w')$ that both consist of quasi-ribbon words. Without loss of generality, assume that $w$ is highest-
  weight in $\Gamma(\hypo_n,w)$ and $w'$ has highest-weight in $\Gamma(\hypo_n,w')$. Since $w$ and $w'$ are in the
  connected component $\Gamma(\plac_n,u)$, the quasi-ribbon tableaux $\qrtableau{w}$ and $\qrtableau{w'}$ have the same
  shape by \fullref{Proposition}{prop:kekfpreserveqrtshapes}. Hence by
  \fullref{Corollary}{corol:charhighestweighttableaubyshapes}, $\wt{w} = \wt{w'}$ and so $w = w'$ by
  \fullref{Corollary}{corol:qrthighestweightsameweightequal}. Thus $\Gamma(\hypo_n,w) = \Gamma(\hypo_n,w')$.
\end{proof}

It is possible that a connected component of $\Gamma(\plac_n)$ contains no quasi-ribbon word components of
$\Gamma(\hypo_n)$. For example, as can be seen in \fullref{Figure}{fig:gamma4partisom1}, $\Gamma(\plac_4,2211)$ contains
the connected components $\Gamma(\hypo_4,2211)$ and $\Gamma(\hypo_4,2312)$, and neither $2211$ nor $2312$ is a
quasi-ribbon word. Thus the next aim is to characterize those connected components of $\Gamma(\plac_n)$ that contain a
(necessarily unique) quasi-ribbon word component of $\Gamma(\hypo_n)$. In order to do this, it is useful to discuss a
shortcut that allows one to calculate quickly the Young tableau $\P{w}$ obtained when $w$ is a quasi-ribbon word.

The \defterm{slide up--slide left algorithm} takes a filled quasi-ribbon diagram $D$ and produces a filled Young diagram
as follows: Start from the quasi-ribbon diagram $D$. Slide all the columns upwards until the topmost entry of each is
on row $1$. Now slide all the symbols leftwards along their rows until the leftmost entry in each row is in the first
column and there are no gaps in each row.

As will be shown in \fullref[(1)]{Proposition}{prop:slideupslideleftpq}, applying the slide up--slide left algorithm to a
quasi-ribbon tableau $\qrtableau{u}$ gives the Young tableau $\P{u}$, as in the following example:
\begin{align*}
  \qrtableau{1325436768} ={} &
  \begin{tikzpicture}[
    x=5mm,
    y=5mm,
    baseline=(firstmatrix-1-1.base),
    tableaumatrix/.append style={outer sep=0mm},
    ]
    \matrix[tableaumatrix,name=firstmatrix,anchor=north west]{
      1 \& 2                     \\
      \& 3 \& 3                  \\
      \&   \& 4                  \\
      \&   \& 5 \& 6 \& 6        \\
      \&   \&   \&   \& 7 \& 8   \\
    };
  \end{tikzpicture}              \\
  \rightsquigarrow{}&
  \begin{tikzpicture}[
    x=5mm,
    y=5mm,
    baseline=(firstmatrix-1-1.base),
    tableaumatrix/.append style={outer sep=0mm},
    ]
    \draw[gray,thick,->] (2.5,-.6) -- (2.5,0);
    \draw[gray,thick,->] (3.5,-1.8) -- (3.5,0);
    \draw[gray,thick,->] (4.5,-1.8) -- (4.5,0);
    \draw[gray,thick,->] (5.5,-2.4) -- (5.5,0);
    \matrix[tableaumatrix,name=firstmatrix,anchor=north west] at (0,0) {
      1 \& 2 \\
      \& 3   \\
    };
    \matrix[tableaumatrix,anchor=north west] at (2,-.6) {
      3                \\
      4                  \\
      5                  \\
    };
    \matrix[tableaumatrix,anchor=north west] at (3,-1.8) {
      6 \& 6                     \\
      \& 7                       \\
    };
    \matrix[tableaumatrix,anchor=north west] at (5,-2.4) {
      8 \\
    };
  \end{tikzpicture} \displaybreak[0]\\
  \rightsquigarrow{}&
  \begin{tikzpicture}[
    x=5mm,
    y=5mm,
    baseline=(firstmatrix-1-1.base),
    tableaumatrix/.append style={outer sep=0mm},
    ]
    \matrix[tableaumatrix,name=firstmatrix,anchor=north west] at (0,0) {
      1 \& 2 \& 3 \& 6 \& 6 \& 8 \\
      \& 3 \& 4 \&     \& 7      \\
      \&   \& 5                  \\
    };
  \end{tikzpicture} \displaybreak[0]\\
  \rightsquigarrow{}&
  \begin{tikzpicture}[
    x=5mm,
    y=5mm,
    baseline=(firstmatrix-1-1.base),
    tableaumatrix/.append style={outer sep=0mm},
    ]
    \draw[gray,thick,->] (.6,-1.5) -- (0,-1.5);
    \draw[gray,thick,->] (1.2,-2.5) -- (0,-2.5);
    \draw[gray,thick,->] (3.2,-1.5) -- (2.7,-1.5);
    \matrix[tableaumatrix,name=firstmatrix,anchor=north west] at (0,0) {
      1 \& 2 \& 3 \& 6 \& 6 \& 8 \\
    };
    \matrix[tableaumatrix,anchor=north west] at (.6,-1) {
      3 \& 4 \\
    };
    \matrix[tableaumatrix,anchor=north west] at (1.2,-2) {
      5 \\
    };
    \matrix[tableaumatrix,anchor=north west] at (3.2,-1) {
      7 \\
    };
  \end{tikzpicture} \displaybreak[0]\\
  \rightsquigarrow{}&
  \begin{tikzpicture}[
    x=5mm,
    y=5mm,
    baseline=(firstmatrix-1-1.base),
    tableaumatrix/.append style={outer sep=0mm},
    ]
    \matrix[tableaumatrix,name=firstmatrix,anchor=north west] at (0,0) {
      1 \& 2 \& 3 \& 6 \& 6 \& 8 \\
      3 \& 4 \& 7                \\
      5                          \\
    };
  \end{tikzpicture} = \P{1325436768}.
\end{align*}
Similarly, as will be shown in \fullref[(2)]{Proposition}{prop:slideupslideleftpq}, applying the slide up--slide left
algorithm to a quasi-ribbon tableau of the same shape $\alpha$ as $\qrtableau{u}$, filled with entries
$1,\ldots,\clen\alpha$, gives the standard Young tableau $\Q{u}$, as in the following example:
\begin{align*}
  \begin{tikzpicture}[
    x=5mm,
    y=5mm,
    baseline=(firstmatrix-1-1.base),
    tableaumatrix/.append style={outer sep=0mm},
    ]
    \matrix[tableaumatrix,name=firstmatrix,anchor=north west]{
      1 \& 2                     \\
      \& 3 \& 4                  \\
      \&   \& 5                  \\
      \&   \& 6 \& 7 \& 8        \\
      \&   \&   \&   \& 9 \& 10  \\
    };
  \end{tikzpicture}
  \rightsquigarrow{}&
  \begin{tikzpicture}[
    x=5mm,
    y=5mm,
    baseline=(firstmatrix-1-1.base),
    tableaumatrix/.append style={outer sep=0mm},
    ]
    \matrix[tableaumatrix,name=firstmatrix,anchor=north west] at (0,0) {
      1 \& 2 \& 4 \& 7 \& 8 \& 10 \\
      3 \& 5 \& 9                 \\
      6                           \\
    };
  \end{tikzpicture} = \Q{1325436768}.
\end{align*}

\begin{proposition}
  \label{prop:slideupslideleftpq}
  Let $T$ be a quasi-ribbon tableau of shape $\alpha$.
  \begin{enumerate}
  \item Applying the slide up--slide left algorithm to $T$ yields the Young tableau $\P{\colreading{T}}$.
  \item Applying the slide up--slide left algorithm to the \textparens{unique} quasi-ribbon tableau of shape $\alpha$ filled with
    entries $1,\ldots,\cwt\alpha$ yields the standard Young tableau $\Q{\colreading{T}}$.
  \end{enumerate}
\end{proposition}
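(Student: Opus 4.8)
The plan is to prove both parts at once by induction on the number $N$ of columns of the ribbon diagram underlying $T$ --- equivalently, on the number of maximal decreasing factors of $\colreading{T}$ --- feeding the columns of $T$ one at a time into \fullref{Algorithm}{alg:placticinsert} and comparing the outcome with the slide up--slide left algorithm. Write $\colreading{T} = C_1 C_2 \cdots C_N$, where $C_h$ denotes the $h$-th column of $T$ read from bottom to top, so that each $C_h$ is a strictly decreasing word. The observation that drives everything is that consecutive columns of a quasi-ribbon tableau are totally ordered: every entry of $C_h$ is less than or equal to every entry of $C_{h+1}$. Indeed, the bottom cell of $C_h$ and the top cell of $C_{h+1}$ are horizontally adjacent in the ribbon diagram and hence share a row, and since the rows of a quasi-ribbon tableau are non-decreasing, the largest entry of $C_h$ is at most the smallest entry of $C_{h+1}$; the general statement follows by transitivity. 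In particular, every entry occurring in $C_1,\dots,C_{h-1}$ is at most $\min C_h$.

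Let $T_h$ be the quasi-ribbon tableau consisting of the first $h$ columns of $T$ (an initial segment of the ribbon $T$, hence itself a quasi-ribbon tableau), and let $S_h$ be the result of applying the slide up--slide left algorithm to $T_h$. Unwinding the definitions, the slide-up step places the $r$-th smallest entry of $C_j$ in row $r$ of grid-column $j$; since the columns of $T_h$ occupy grid-columns $1,\dots,h$ with $C_h$ alone in grid-column $h$, the slide-left step obtains $S_h$ from $S_{h-1}$ by appending, for $r=1,\dots,k_h$ (where $k_h$ is the height of $C_h$), the $r$-th smallest entry of $C_h$ to the right-hand end of row $r$. Using the total ordering of the columns, a routine cell-by-cell check (of the same flavour as the observation above, the strictness down the columns of $S_h$ coming from the strictness down the columns of $T$) shows that each $S_h$, and in particular $S=S_N$, is a genuine Young tableau.

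It then remains to show that Schensted-inserting the decreasing word $C_h$ into the Young tableau $S_{h-1}$ produces $S_h$; granting this, the inductive hypothesis $\P{C_1\cdots C_{h-1}} = S_{h-1}$ together with $\P{C_1\cdots C_h} = \P{C_1\cdots C_{h-1}} \leftarrow C_h$ gives $\P{C_1\cdots C_h} = S_h$, and for $h=N$ this is part~(1). This is where the total ordering is decisive: every entry of $S_{h-1}$ is at most every entry of $C_h$, so each letter of $C_h$, when inserted, is not strictly smaller than any original entry of any row it enters, and can therefore only bump the entry previously deposited in that row by a larger letter of $C_h$ --- which sits at that row's right-hand end. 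Tracing this cascade shows that the $t$-th letter of $C_h$ inserted creates the single new cell at the end of row $t$, and that after all $k_h$ insertions the $r$-th smallest entry of $C_h$ comes to rest at the end of row $r$, exactly as in the description of $S_h$ above. For part~(2) one carries the recording ribbon of \fullref{Algorithm}{alg:placticinsert} along in parallel: the new cell produced while inserting the $t$-th letter of $C_h$ --- the letter in position $(k_1+\cdots+k_{h-1})+t$ of $\colreading{T}$ --- is the end of row $t$, so $\Q{\colreading{T}}$ acquires the label $(k_1+\cdots+k_{h-1})+t$ there. On the other hand, the Hasse diagram of the entry-order on a ribbon diagram is a single path (a ribbon contains no $2\times 2$ block), so the unique standard filling of shape $\alpha$ fills the grid-columns consecutively, top to bottom; hence the number occupying the $t$-th cell from the top of grid-column $h$ in that filling is exactly $(k_1+\cdots+k_{h-1})+t$, and the slide up--slide left algorithm places that number at the end of row $t$. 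Comparing, part~(2) follows.

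The main obstacle is the insertion cascade in the previous paragraph: one has to check with some care that every bump during the insertion of $C_h$ lands at the right-hand end of the next row and never in its interior, treating also the degenerate case in which the cascade descends into a row empty in $S_{h-1}$ (where a bump into an empty row simply creates that row); the point that makes all of this go through, and which is precisely what the quasi-ribbon structure supplies, is that $C_h$ dominates every entry already present. An alternative to this step is to note that slide up--slide left is a composition of jeu de taquin slides, which preserve the plactic class of the column reading word, so that $\colreading{S} \placcong \colreading{T}$; since $S$ is a Young tableau this forces $S = \P{\colreading{T}}$ --- but this route requires developing properties of jeu de taquin not otherwise used in the paper.
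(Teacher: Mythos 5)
Your proof is correct and follows essentially the same route as the paper's: induction on the number of columns, the observation that every entry of the new column dominates every entry already inserted, the resulting bumping cascade that deposits the $r$-th smallest entry of the column at the end of row $r$, and the parallel tracking of the recording tableau against the unique standard filling of the ribbon. The only additions are the explicit check that each intermediate $S_h$ is a Young tableau (which the paper leaves implicit) and the jeu-de-taquin aside, neither of which changes the argument.
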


\begin{proof}
  Let $U$ be the unique quasi-ribbon tableau of shape $\alpha$ filled with entries $1,\ldots,\cwt\alpha$.  The proof is
  by induction on the number of columns $m$ in a ribbon diagram of shape $\alpha$.

  Suppose $m=1$. Then applying the slide up--slide left algorithm to $T$ and $U$ yields $T$ and $U$, respectively (viewed as filled
  Young diagrams). Since $T$ satisifies the condition for being a Young tableau, $\P{\colreading{T}} = T$. Furthermore,
  $U$ is also the unique standard Young tableau with a single column of the same shape as $T$, and so must be
  $\Q{\colreading{T}}$.

  Now suppose $m > 1$ and that the result holds for all quasi-ribbon tableau $T'$ with fewer than $m$ columns. In
  particular, it holds for the quasi-ribbon tableau $T'$ formed by the first $m-1$ columns of $T$. In particular, by
  applying the slide up--slide left algorithm to $T'$, one obtains $\P{\colreading{T'}}$. If the $m$-th column of $T$
  contains entries $a_1,\ldots, a_k$ (listed from bottom to top, so that $a_1 > \ldots > a_k$), then applying the slide
  up--slide left algorithm to $T$ yields the filled Young diagram obtained by applying it to $T'$ (which yields
  $\P{\colreading{T'}}$) and then adding the symbol $a_h$ at the rightmost end of the $h$-th row. Since each symbol
  $a_1,\ldots,a_k$ is greater or equal to than every symbol in $T'$ and thus in $\P{\colreading{T'}}$, using
  \fullref{Algorithm}{alg:hypoplacticinsertionone} to insert the symbols $a_1,\ldots,a_k$ into $\P{\colreading{T'}}$
  does not involve bumping any symbols in $\P{\colreading{T'}}$. That is, the symbols $a_1,\ldots,a_k$ (which form a
  strictly decreasing sequence) bump each other up the rightmost edge of $\P{\colreading{T'}}$, as in the following
  example
  \[
  \begin{tikzpicture}[
    x=5mm,
    y=5mm,
    tableaumatrix/.append style={outer sep=0mm},
    baseline=(baselinematrix-1-1.base),
    ]
    \matrix[tableaumatrix,name=baselinematrix] at (6,0) {a_{3} \\};
    \matrix[tableaumatrix] at (5,-1) {a_{2} \\};
    \matrix[tableaumatrix] at (5,-2) {a_{1} \\};
    \draw[gray,xshift=-2.5mm,yshift=2.5mm] (0,0)--(6,0)--(6,-1)--(5,-1)--(5,-3)--(3,-3)--(3,-4)--(1,-4)--(0,-4)--cycle;
    \node at (2,-1) {$\P{\colreading{T'}}$};
  \end{tikzpicture}
  \leftarrow a_4 =
  \begin{tikzpicture}[
    x=5mm,
    y=5mm,
    tableaumatrix/.append style={outer sep=0mm},
    baseline=(baselinematrix-1-1.base),
    ]
    \matrix[tableaumatrix,name=baselinematrix] at (6,0) {a_{4} \\};
    \matrix[tableaumatrix] at (5,-1) {a_{3} \\};
    \matrix[tableaumatrix] at (5,-2) {a_{2} \\};
    \matrix[tableaumatrix] at (3,-3) {a_{1} \\};
    \draw[gray,xshift=-2.5mm,yshift=2.5mm] (0,0)--(6,0)--(6,-1)--(5,-1)--(5,-3)--(3,-3)--(3,-4)--(1,-4)--(0,-4)--cycle;
    \node at (2,-1) {$\P{\colreading{T'}}$};
  \end{tikzpicture}
  \]

  Furthermore, applying the slide up--slide left algorithm to the unique quasi-ribbon tableau of the same shape as $T'$
  filled with entries $1,\ldots,(\cwt\alpha - k)$ yields the standard Young tableau $\Q{\colreading{T'}}$. Thus applying
  the slide up--slide left algorithm to the unique quasi-ribbon tableau of shape $\alpha$ filled with entries
  $1,\ldots,\cwt\alpha$ yields the filled Young diagram obtained by applying it to $\Q{\colreading{T'}}$ and adding
  $\cwt\alpha - k + h$ to the end of the $h$-th row for $h = 1,\ldots,k$. By the above analysis of the behaviour of
  \fullref{Algorithm}{alg:hypoplacticinsertionone}, this is the standard Young tableau $\Q{\colreading{T}}$.
\end{proof}

\begin{proposition}
  \label{prop:slideupslideleftqfromuniqueqrt}
  Let $Q$ be a standard Young tableau. There is at most one quasi-ribbon tableau $T$ such that $Q$ can be obtained by
  applying the slide up--slide left algorithm to $T$.
\end{proposition}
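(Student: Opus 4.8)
The plan is to show that any quasi-ribbon tableau $T$ whose slide up--slide left image is $Q$ is completely determined by $Q$. The slide algorithm only repositions cells, never altering their entries, so $Q$ being standard forces the entries of $T$ to be exactly $1,2,\ldots,N$, each appearing once, where $N$ is the number of cells of $Q$. In a quasi-ribbon tableau the entries weakly increase along the natural ribbon-reading order (left-to-right within a row, and strictly down at each corner, by the column-strictness condition); when all entries are distinct this forces that reading order to list them as $1,2,\ldots,N$. Hence $T$ is the unique quasi-ribbon tableau of its shape $\alpha$ filled with $1,\ldots,N$; call it $U$ (it is the tableau appearing in \fullref[(2)]{Proposition}{prop:slideupslideleftpq}). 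It therefore suffices to recover the composition $\alpha$ from $Q$.

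First I would record that, in $U$, each column of the underlying ribbon diagram holds a block of \emph{consecutive} integers: the cells of one ribbon column are visited consecutively in the ribbon-reading order (the top cell of a column of height at least $2$ is the last cell of its row, after which the reading runs straight down the column), so their entries are consecutive. Next, tracking the slide: sliding the columns up sends the $p$-th cell from the top of a ribbon column to row $p$, and the subsequent leftward slide does not change rows; hence the row of an entry $k$ in $Q$ equals the position of $k$ from the top of its ribbon column, that is, $k$ minus the smallest entry of that column plus one.

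The crucial step is to prove that $D(\alpha)$ equals the descent set of $Q$, meaning the set of $k$ for which $k+1$ lies in a strictly lower row of $Q$ than $k$ does. Fix $k < N$ and examine the step between cells $k$ and $k+1$ in the ribbon of shape $\alpha$. If it is a downward step, then $k$ is the last entry of some non-final row, so $k \in D(\alpha)$; moreover $k$ and $k+1$ lie in the same ribbon column with $k+1$ directly below $k$, so $k+1$ is one row lower than $k$ in $Q$ --- a descent. If it is a rightward step, then $k$ and $k+1$ share a row, so $k \notin D(\alpha)$; moreover $k$ is the bottom cell of its column while $k+1$ is the top cell of the following column, so the row of $k$ in $Q$ (the height of its column) is at least the row of $k+1$ in $Q$ (which is $1$) --- not a descent. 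Thus $D(\alpha)$ is the descent set of $Q$. Since a composition is determined by its descent set together with its weight (which here is $N$), this recovers $\alpha$, and hence $U$, so $T$ is unique.

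The main obstacle is precisely this last case analysis --- pinning down where each entry of $U$ lands under the slide and matching the descents of $Q$ with $D(\alpha)$; the remaining ingredients (the forced filling of a ribbon by $1,\ldots,N$, and reconstructing a composition from its descent set) are routine. A shorter alternative route is to use \fullref[(2)]{Proposition}{prop:slideupslideleftpq} to write $Q = \Q{\colreading{U}}$ and then combine the classical fact that the descent set of an RSK recording tableau equals that of the inserted word with the observation that $\colreading{U}$ is an involution whose descent set is $D(\alpha)$.
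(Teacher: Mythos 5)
Your proof is correct, but it follows a genuinely different route from the paper's. Both arguments begin the same way: since the slide up--slide left algorithm only repositions cells, a preimage $T$ of the standard tableau $Q$ must be filled with $1,\ldots,N$, and the row/column conditions then force $T$ to be the unique such filling $U$ of its shape $\alpha$, so everything reduces to recovering $\alpha$ from $Q$. At that point the paper takes an indirect route: by \fullref[(1--2)]{Proposition}{prop:slideupslideleftpq} one has $\P{\colreading{T}} = \Q{\colreading{T}} = Q$, so $\colreading{T}$ is the word that the Robinson--Schensted--Knuth correspondence assigns to the pair $(Q,Q)$, and injectivity of that correspondence forces any two preimages to coincide. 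You instead reconstruct $\alpha$ explicitly: you track where each entry lands under the slide (row in $Q$ equals depth in its ribbon column), analyse the downward versus rightward steps of the ribbon, and conclude that $D(\alpha)$ equals the descent set of $Q$, which together with $N$ determines $\alpha$. Your case analysis is sound (in particular the observations that a non-rightmost cell of a row has nothing below it, and a non-leftmost cell has nothing above it, are exactly what is needed). What each approach buys: the paper's proof is two lines given the preceding proposition and the RSK bijection; yours is self-contained, avoids RSK injectivity altogether, and is constructive --- it exhibits an algorithm recovering $T$ from $Q$ and implicitly characterizes which standard tableaux arise as images, which connects nicely with \fullref{Proposition}{prop:charplaccomponentscontainingqrt}. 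Your closing ``shorter alternative'' via descent sets of recording tableaux is essentially a hybrid of the two and would also work, but it imports an external classical fact the paper does not invoke.
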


\begin{proof}
  Suppose that $T$ and $T'$ are quasi-ribbon tableaux such that applying the slide up--slide left algorithm to $T$
  and $T'$ yields $Q$. Then by \fullref[(1--2)]{Proposition}{prop:slideupslideleftpq},
  $\P{\colreading{T}} = \Q{\colreading{T}} = Q = \P{\colreading{T'}} = \Q{\colreading{T'}}$. Thus by the
  Robinson--Schensted--Knuth correspondence, $\colreading{T} = \colreading{T'}$ and so $T = T'$.
\end{proof}

\begin{proposition}
  \label{prop:charplaccomponentscontainingqrt}
  A connected component $\Gamma(\plac_n,w)$ contains a quasi-ribbon word component if and only if $\Q{w}$ can be
  obtained by applying the slide up--slide left algorithm to some quasi-ribbon tableau $T$ of shape $\alpha$ and entries
  $1,\ldots,\cwt\alpha$ and such that $\clen\alpha \leq n$.
\end{proposition}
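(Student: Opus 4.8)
The plan is to prove the two implications separately, in each case reducing to the highest-weight quasi-ribbon word of the relevant shape and then invoking \fullref{Proposition}{prop:slideupslideleftpq}\,(2) together with the fact recalled in \fullref{Subsection}{subsec:propertiesgammaplacn} that two words of $\aA_n^*$ lie in the same connected component of $\Gamma(\plac_n)$ if and only if they have the same recording tableau $\Q{\cdot}$.

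For the forward implication, suppose $\Gamma(\plac_n,w)$ contains a quasi-ribbon word component of $\Gamma(\hypo_n)$. Since the edges of $\Gamma(\hypo_n)$ are among the edges of $\Gamma(\plac_n)$ by \fullref{Proposition}{prop:keddefinedimpliesefdefined}, such a component genuinely sits inside $\Gamma(\plac_n,w)$, so there is a quasi-ribbon word $s$ with $s \in \Gamma(\plac_n,w)$, whence $\Q{s} = \Q{w}$. Let $\alpha$ be the shape of $\qrtableau{s}$. Since in a quasi-ribbon tableau every entry in row $h$ is at least $h$, and $s$ has entries in $\aA_n$, the bottom row forces $\clen\alpha \leq n$. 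As $s$ is a quasi-ribbon word, $s = \colreading{\qrtableau{s}}$, so \fullref{Proposition}{prop:slideupslideleftpq}\,(2) applied to $\qrtableau{s}$ shows that $\Q{s}$, and therefore $\Q{w}$, is obtained by applying the slide up--slide left algorithm to the quasi-ribbon tableau of shape $\alpha$ filled with $1,\ldots,\cwt\alpha$. This is exactly the right-hand condition.

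For the converse, suppose $\Q{w}$ is obtained by applying the slide up--slide left algorithm to a quasi-ribbon tableau $T$ of shape $\alpha$ with entries $1,\ldots,\cwt\alpha$ and $\clen\alpha \leq n$. Let $v$ be the highest-weight quasi-ribbon word of shape $\alpha$ furnished by \fullref{Proposition}{prop:charhighestweighttableau}\,(2); since its corresponding tableau has $j$-th row full of symbols $j$, its largest symbol is $\clen\alpha \leq n$, so $v \in \aA_n^*$. Applying \fullref{Proposition}{prop:slideupslideleftpq}\,(2) to $\qrtableau{v}$ shows that $\Q{v}$ is obtained by applying the slide up--slide left algorithm to the unique quasi-ribbon tableau of shape $\alpha$ filled with $1,\ldots,\cwt\alpha$; as the standard filling of a ribbon diagram is forced by reading its cells from upper left to lower right, this unique tableau is precisely the $T$ of the hypothesis, so $\Q{v} = \Q{w}$. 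Hence $v$ and $w$ lie in the same connected component of $\Gamma(\plac_n)$, and $\Gamma(\plac_n,w) = \Gamma(\plac_n,v)$ contains the quasi-ribbon word $v$; by \fullref{Corollary}{corol:qrwsameshapecomponent} the component $\Gamma(\hypo_n,v)$ is a quasi-ribbon word component of $\Gamma(\hypo_n)$, and it lies inside $\Gamma(\plac_n,w)$ again by \fullref{Proposition}{prop:keddefinedimpliesefdefined}.

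The argument is essentially bookkeeping once \fullref{Proposition}{prop:slideupslideleftpq} is in hand, and no serious obstacle is anticipated. The two points that need care are: the bound $\clen\alpha \leq n$, which in the forward direction must be read off from the fact that a quasi-ribbon tableau with entries in $\aA_n$ has at most $n$ rows, and in the converse is what guarantees that the highest-weight quasi-ribbon word of shape $\alpha$ is genuinely a word over $\aA_n$; and the identification, in the converse, of the tableau $T$ from the hypothesis with the unique standard-filled quasi-ribbon tableau of shape $\alpha$, so that the two instances of the slide up--slide left algorithm really do produce the same standard Young tableau.
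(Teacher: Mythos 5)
Your proof is correct and follows essentially the same route as the paper's: both directions reduce to the highest-weight quasi-ribbon word of shape $\alpha$, apply \fullref[(2)]{Proposition}{prop:slideupslideleftpq}, and use the fact that membership of a connected component of $\Gamma(\plac_n)$ is determined by $\Q{\cdot}$. The only differences are cosmetic — you spell out explicitly that the hypothesised $T$ must coincide with the unique standard-filled quasi-ribbon tableau of shape $\alpha$ and that the resulting $\Gamma(\hypo_n)$-component sits inside $\Gamma(\plac_n,w)$, points the paper leaves implicit.
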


\begin{proof}
  Suppose $\Gamma(\plac_n,w)$ contains a quasi-ribbon word component. Let $u \in \Gamma(\plac_n,w)$ be a quasi-ribbon
  word. Let $\alpha$ be the shape of $\qrtableau{u}$. Note that $\qrtableau{u}$ has at most $n$ rows, so
  $\clen\alpha \leq n$. By \fullref[(2)]{Proposition}{prop:slideupslideleftpq}, applying the slide up--slide left algorithm
  to the unique quasi-ribbon tableau of shape $\alpha$ filled with entries $1,\ldots,\cwt\alpha$ yields the standard
  Young tableau $\Q{u}$. Since $u$ and $w$ are in the same connected component of $\Gamma(\plac_n)$, the standard Young tableaux $\Q{w}$ and
  $\Q{u}$ are equal.

  On the other hand, suppose that $\Q{w}$ can be obtained by applying the slide up--slide left algorithm to some
  quasi-ribbon tableau $T$ of shape $\alpha$ and entries $1,\ldots,\cwt\alpha$ and such that $\clen\alpha \leq n$. Let
  $U$ be the quasi-ribbon tableau of shape $\alpha$ and entries in $\aA_n$ such that $\colreading{U}$ is highest-weight
  in its component of $\Gamma(\hypo_n)$; note that $U$ exists since $\clen\alpha \leq n$. Then by
  \fullref[(2)]{Proposition}{prop:slideupslideleftpq}, $\Q{\colreading{U}} = \Q{w}$ since $U$ has shape $\alpha$. Hence
  $\colreading{U} \in \Gamma(\plac_n,w)$.
\end{proof}

Define a permutation $\sigma$ of $\set{1,\ldots,n}$ to be \defterm{interval-reversing} if there is some composition
$\alpha = (\alpha_1,\ldots,\alpha_k)$ with $\cwt{\alpha} = n$ such that for all $h = 0,\ldots,k$, the permutation
$\sigma$ preserves the interval $\set{\alpha_1+\ldots+\alpha_h+1,\ldots,\alpha_1+\ldots+\alpha_{h+1}}$ and reverses the
order of its elements. (For $h=0$, this interval is $\set{1,\ldots,\alpha_1}$.) Thus, for example,
\[
\begin{pmatrix}
1 & 2 & 3 & 4 & 5 & 6 & 7 & 8 \\
1 & 5 & 4 & 3 & 2 & 8 & 7 & 6 \\
\end{pmatrix}
\]
is an interval-reversing permutation of $\set{1,\ldots,8}$, where the appropriate composition $\alpha$ is
$(1,4,3)$. It is clear that there is only one choice for $\alpha$.

It is well-known that if $w$ is a standard word (and thus a permutation), then the Robinson--Schensted--Knuth correspondence
associates $w$ with $(\P{w},\Q{w})$ and $w^{-1}$ with $(\Q{w},\P{w})$. That is, $\Q{w} = \P{w^{-1}}$ and
$\P{w} = \Q{w^{-1}}$. Thus involutions are associated to pairs $(Q,Q)$, where $Q$ is a standard Young tableau.

\begin{proposition}
  A connected component $\Gamma(\plac_n,w)$ contains a quasi-ribbon word component if and only if the
  Robinson--Schensted--Knuth correspondence associates $(\Q{w},\Q{w})$ with an interval-reversing involution.
\end{proposition}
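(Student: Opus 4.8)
The plan is to unwind \fullref{Proposition}{prop:charplaccomponentscontainingqrt} together with a dictionary between standard quasi-ribbon words and interval-reversing involutions. Recall first, as noted just before the statement, that the Robinson--Schensted--Knuth correspondence sends an involution $\pi$ to a pair $(\P{\pi},\P{\pi})$ with $\P{\pi}=\Q{\pi}$; so the assertion ``$(\Q{w},\Q{w})$ is associated with an interval-reversing involution'' means precisely that the unique standard word $\pi$ with $\P{\pi}=\Q{\pi}=\Q{w}$ is interval-reversing.

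The key lemma I would establish is the following dictionary. For a composition $\alpha$ with $\clen\alpha\le n$, let $U_\alpha$ denote the \textparens{unique, by the argument used in \fullref{Proposition}{prop:slideupslideleftpq}} standard quasi-ribbon tableau of shape $\alpha$, i.e.\ the ribbon diagram of shape $\alpha$ filled with $1,\ldots,\cwt\alpha$. I claim that $\colreading{U_\alpha}$ is an interval-reversing involution whose witnessing composition is the sequence $\gamma=(\gamma_1,\ldots,\gamma_p)$ of column heights of a ribbon diagram of shape $\alpha$, read left to right, and that conversely every interval-reversing involution arises this way from a uniquely determined $\alpha$. The proof rests on one combinatorial observation: in a ribbon diagram, when the cells are listed row by row and left to right, the cells of any single column form a contiguous run, and these runs appear in left-to-right column order. \textparens{This is where the ``no $2\times2$ subarray'' property enters: a column is only ``left'' by descending into the one cell it shares with the next column.} Consequently $U_\alpha$ fills its $h$-th column with the consecutive block $\{\gamma_1+\cdots+\gamma_{h-1}+1,\ldots,\gamma_1+\cdots+\gamma_h\}$, and, since columns increase downwards, reading each column bottom-to-top reverses each such block --- exactly the definition of an interval-reversing permutation with data $\gamma$; it is an involution because each block is fixed setwise. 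For the converse, any composition $\gamma$ is the column-height sequence of exactly one ribbon shape $\alpha$ \textparens{determined by $r_1=1$, $r_{h+1}=r_h+\gamma_h-1$}, and one checks $\cwt\alpha=\cwt\gamma$ and $\clen\alpha=\cwt\gamma-\clen\gamma+1$.

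With the lemma in hand I would assemble the equivalences. Applying \fullref[(1)]{Proposition}{prop:slideupslideleftpq} to $T=U_\alpha$ shows that the slide up--slide left algorithm turns $U_\alpha$ into $\P{\colreading{U_\alpha}}$, and \fullref[(2)]{Proposition}{prop:slideupslideleftpq} applied to $T=U_\alpha$ shows it turns $U_\alpha$ into $\Q{\colreading{U_\alpha}}$; hence $\P{\colreading{U_\alpha}}=\Q{\colreading{U_\alpha}}$ \textparens{consistent with $\colreading{U_\alpha}$ being an involution}. Now chain: by \fullref{Proposition}{prop:charplaccomponentscontainingqrt} and the uniqueness in \fullref{Proposition}{prop:slideupslideleftqfromuniqueqrt}, $\Gamma(\plac_n,w)$ contains a quasi-ribbon word component $\iff$ $\Q{w}$ is obtained by sliding some $U_\alpha$ with $\clen\alpha\le n$ $\iff$ $\Q{w}=\P{\colreading{U_\alpha}}=\Q{\colreading{U_\alpha}}$ for such an $\alpha$ $\iff$ the involution $\pi$ with $\P{\pi}=\Q{\pi}=\Q{w}$ equals $\colreading{U_\alpha}$ for such an $\alpha$ $\iff$, by the lemma, $\pi$ is interval-reversing (with witnessing composition $\gamma$ whose associated ribbon shape $\alpha$ has $\clen\alpha\le n$, i.e.\ $\clen\gamma\ge\abs{w}-n+1$). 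For the forward direction one takes $\pi=\std{u}$ for a quasi-ribbon word $u\in\Gamma(\plac_n,w)$ and invokes \fullref{Proposition}{prop:uqrwiffstduqrw}; for the backward direction one feeds $U_\alpha$ for the ribbon shape determined by $\gamma$ into \fullref{Proposition}{prop:charplaccomponentscontainingqrt}.

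The step I expect to be the main obstacle is the combinatorial heart of the lemma --- carefully proving the ``columns are contiguous runs of the row-reading, in left-to-right order'' claim and deriving from it the column-reading formula --- together with the closely related bookkeeping of the two compositions in play: the shape $\alpha$ records the \emph{row lengths} of the ribbon (so $\clen\alpha$ is its number of rows), while the interval-reversing data $\gamma$ records its \emph{column heights}. Mishandling the relation $\clen\alpha=\cwt\gamma-\clen\gamma+1$ is the easiest way to mismanage the rank condition, and one must take care to transport the hypothesis $\clen\alpha\le n$ of \fullref{Proposition}{prop:charplaccomponentscontainingqrt} through the dictionary so that the ``interval-reversing'' conclusion is understood together with it.
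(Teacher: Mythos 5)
Your proposal is correct and follows essentially the same route as the paper: both directions go through \fullref{Proposition}{prop:charplaccomponentscontainingqrt} and \fullref{Proposition}{prop:slideupslideleftpq}, with the combinatorial heart being that the column reading of the unique standard quasi-ribbon tableau of shape $\alpha$ is exactly an interval-reversing involution whose witnessing composition is the sequence of column heights (and conversely). Your explicit attention to transporting the condition $\clen\alpha\le n$ through the dictionary is if anything slightly more careful than the paper's own treatment of the reverse implication.
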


\begin{proof}
  Suppose $\Gamma(\plac_n,w)$ contains a quasi-ribbon word. By
  \fullref{Proposition}{prop:charplaccomponentscontainingqrt}, $\Q{w}$ can be obtained by applying the slide up--slide
  left algorithm to some quasi-ribbon tableau $T$ of shape $\beta$ and entries $1,\ldots,\cwt\beta$ and such that
  $\clen\beta \leq n$. By \fullref[(1--2)]{Proposition}{prop:slideupslideleftpq},
  $\P{\colreading{T}} = \Q{\colreading{T}} = \Q{w}$. So the Robinson--Schensted--Knuth correspondence associates
  $(\Q{w},\Q{w})$ wih $\colreading{T}$. Let $\alpha = (\alpha_1,\ldots,\alpha_k)$ be such that $\alpha_i$ is the length of the
  $i$-th column of $T$. Then by the definition of $T$ and its column reading, $\colreading{T}$ is an interval-reversing
  permutation where the appropriate composition is $\alpha$.

  On the other hand, suppose the Robinson--Schensted--Knuth correspondence associates $(\Q{w},\Q{w})$ with an
  interval-reversing involution $u$, where the appropriate composition is $\alpha = (\alpha_1,\ldots,\alpha_k)$. Note
  that $\Q{w} = \P{u}$ and $\Q{w} = \Q{u}$. Let $T$ be the quasi-ribbon tabloid $\qrtabloid{u}$. Then $\alpha_i$ is the
  length of the $i$-th column of $T$, which is filled with the image of the interval
  $\set{\alpha_1+\ldots+\alpha_i+1,\ldots,\alpha_1+\ldots+\alpha_{i+1}}$ since $u$ is an interval-reversing
  permutation. Since the elements in each interval are less than the elements in the next, the rows of $T$ are
  increasing from left to right and so $T$ is a quasi-ribbon tableau that is filled with the symbols
  $1,\ldots,|u|$. Note that $u = \colreading{T}$, and so $\Q{w} = \Q{u} = \Q{\colreading{T}}$. So by
  \fullref{Proposition}{prop:slideupslideleftpq}, $\Q{w}$ can be obtained from $T$ by applying the slide up--slide left
  algorithm. Hence $\Gamma(\plac_n,w)$ contains a quasi-ribbon word by
  \fullref{Proposition}{prop:charplaccomponentscontainingqrt}.
\end{proof}

\begin{corollary}
  \label{corol:countisoplaccomponentscontainingqrt}
  Let $w \in \aA_n^*$ and let $\lambda = (\lambda_1,\ldots,\lambda_{\plen\lambda})$ be the shape of $\P{w}$. The number of
  connected components of $\Gamma(\plac_n)$ that are isomorphic to $\Gamma(\plac_n,w)$ and contain quasi-ribbon word
  components of $\Gamma(\hypo_n)$ is
  \[
  \begin{cases}
    {\displaystyle\binom{\lambda_1}{\lambda_1-\lambda_2,\;\;\ldots,\;\;\lambda_{\plen\lambda-1}-\lambda_{\plen\lambda},\;\;\lambda_{\plen\lambda}}} & \text{if $\pwt\lambda - \lambda_1 +1 \leq n$,} \\
    0 & \text{otherwise.}
  \end{cases}
  \]
\end{corollary}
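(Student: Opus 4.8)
The plan is to translate the quantity, step by step, first into a count of standard Young tableaux of shape $\lambda$, then (via the slide up--slide left algorithm) into a count of ribbon diagrams, and finally into a plain count of permutations of a multiset.

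First I would reduce to standard Young tableaux. Recall from \fullref{Subsection}{subsec:propertiesgammaplacn} that each connected component of $\Gamma(\plac_n)$ is labelled by the standard Young tableau $\Q{w'}$ common to all its vertices $w'$, that this tableau has the same shape as $\P{w'}$, and that two components are isomorphic precisely when their labelling tableaux have the same shape. Hence the components isomorphic to $\Gamma(\plac_n,w)$ correspond bijectively to the standard Young tableaux $Q$ of shape $\lambda$. By \fullref{Proposition}{prop:charplaccomponentscontainingqrt}, the component labelled by such a $Q$ contains a quasi-ribbon word component of $\Gamma(\hypo_n)$ if and only if $Q$ is obtained by applying the slide up--slide left algorithm to some quasi-ribbon tableau $T$ with entries $1,\ldots,\cwt\beta$ and $\clen\beta\leq n$, writing $\beta$ for the shape of $T$; and by \fullref{Proposition}{prop:slideupslideleftqfromuniqueqrt} distinct such $T$ produce distinct $Q$. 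So the quantity to compute equals the number of quasi-ribbon tableaux $T$ with entries $1,\ldots,\cwt\beta$ and $\clen\beta\leq n$ such that the slide up--slide left algorithm turns $T$ into a tableau of shape $\lambda$. Since that last condition forces $\cwt\beta=\pwt\lambda$, I write $m=\pwt\lambda$ henceforth.

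Second, I would parametrize such tableaux by their column-height sequences. A quasi-ribbon tableau whose entries are exactly $1,\ldots,m$ is the unique standard filling of a ribbon diagram, and a ribbon diagram is in turn completely determined by (indeed recoverable from) its left-to-right sequence $(c_1,\ldots,c_r)$ of column heights; thus these tableaux correspond bijectively to the finite sequences of positive integers $(c_1,\ldots,c_r)$ with $c_1+\cdots+c_r=m$, and, because a connected ribbon diagram with $m$ cells has its number of rows plus its number of columns equal to $m+1$, one has $r$ columns and $\clen\beta=m-r+1$ rows. Now I would trace the slide up--slide left algorithm: sliding the columns up places column $j$ in rows $1,\ldots,c_j$, and the subsequent sliding left changes no row's cell count, so row $i$ of the output contains $\#\gset{j}{c_j\geq i}$ cells; hence the output shape is the conjugate of the partition obtained by rearranging $(c_1,\ldots,c_r)$ into weakly decreasing order.

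Third, I would conclude. Writing $\lambda'$ for the conjugate partition of $\lambda$, the output shape is $\lambda$ if and only if $(c_1,\ldots,c_r)$ is a rearrangement of the parts of $\lambda'$; since $\lambda'$ has exactly $\lambda_1$ parts, summing to $m$, this forces $r=\lambda_1$ and hence $\clen\beta=\pwt\lambda-\lambda_1+1$, independently of the chosen rearrangement. Consequently, if $\pwt\lambda-\lambda_1+1>n$ there is no admissible $T$ and the count is $0$, whereas if $\pwt\lambda-\lambda_1+1\leq n$ the count equals the number of distinct rearrangements of the parts of $\lambda'$. Finally, $\lambda'$ has precisely $\lambda_i-\lambda_{i+1}$ parts equal to $i$ for each $i=1,\ldots,\plen\lambda$ (with the convention $\lambda_{\plen\lambda+1}=0$), so the number of such rearrangements is
\[
\binom{\lambda_1}{\lambda_1-\lambda_2,\ \ldots,\ \lambda_{\plen\lambda-1}-\lambda_{\plen\lambda},\ \lambda_{\plen\lambda}},
\]
as claimed.

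The step I expect to demand the most care is the second one: checking that a ribbon diagram is reconstructible from its column-height sequence — a short but fiddly argument using the fact that consecutive columns of a ribbon overlap in exactly one row — and tracing the slide up--slide left algorithm precisely enough to identify the output shape as the conjugate of the sorted column heights. Everything after that is driven by the identity $\clen\beta=\pwt\lambda-\lambda_1+1$, which makes the rank hypothesis emerge in exactly the stated form; and everything before it is a direct appeal to \fullref{Proposition}{prop:charplaccomponentscontainingqrt}, \fullref{Proposition}{prop:slideupslideleftqfromuniqueqrt}, the correspondence between connected components of $\Gamma(\plac_n)$ and standard Young tableaux, and the standard count of permutations of a multiset.
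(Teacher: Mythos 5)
Your proposal is correct and follows essentially the same route as the paper's own proof: reduce via Propositions \ref{prop:charplaccomponentscontainingqrt} and \ref{prop:slideupslideleftqfromuniqueqrt} to counting the ribbon shapes whose slide up--slide left image has shape $\lambda$, obtain the rank condition from the identity (rows) $=$ (boxes) $-$ (columns) $+1$, and count the admissible shapes as rearrangements of a multiset of column heights, yielding the multinomial coefficient. Your explicit identification of the output shape as the conjugate of the sorted column heights is just a slightly more formal phrasing of the paper's count of tabloid shapes with $\lambda_j-\lambda_{j+1}$ columns ending on row $j$.
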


\begin{proof}
  By \fullref{Proposition}{prop:charplaccomponentscontainingqrt}, the number of such components is equal to the number
  of standard Young tableaux of shape $\lambda$ that can be obtained by applying the slide up--slide left algorithm to
  some quasi-ribbon tableau of some shape $\alpha$ and entries $1,\ldots,\cwt\alpha$ and such that $\clen\alpha \leq n$. By
  the definition of a quasi-ribbon diagram, the number of rows $\clen\alpha$ of such a diagram is equal to the total
  number of boxes minus the number of columns plus $1$ (since adjacent pairs of columns overlap in exactly one
  row). Since the slide up--slide left algorithm preserves the number of columns and the total number of boxes, this
  number of rows is $\pwt\lambda - \lambda_1 + 1$. Hence if $\pwt\lambda - \lambda_1 + 1 > n$, there is no standard
  Young tableau that can be obtained in such a way. So suppose henceforth that $\pwt\lambda - \lambda_1 + 1 \leq n$.

  For any composition $\alpha$, there is exactly one quasi-ribbon tableau of shape $\alpha$ filled with symbols
  $1,\ldots,\cwt\alpha$, and each of them yields (under the slide up--slide left algorithm) a different standard Young
  tableau by \fullref{Proposition}{prop:charplaccomponentscontainingqrt}. Furthermore, there is a one-to-one
  correspondence between compositions $\alpha$ and shapes of tabloids: one simply takes a ribbon diagram of shape
  $\alpha$ and applies the `slide up' part of the slide up--slide left algorithm. Thus, to obtain the number of such
  quasi-ribbon tableaux that give the correct shape on applying the slide up--slide left algorithm, it suffices to count
  the number of tabloid shapes that have $\lambda_j$ cells on the $j$-th row, for each $j = 1,\ldots,\plen\lambda$. This
  is equal to the number of tabloid shapes that have $\lambda_1$ cells on the first row and $\lambda_j-\lambda_{j+1}$
  columns ending on the $j$-th row, which in turn is equal to the number of words of length $\lambda_1$ containing
  $\lambda_j-\lambda_{j+1}$ symbols $j$ for each $j < \plen\lambda$ and $\lambda_{\plen\lambda}$ symbols $\plen\lambda$.
\end{proof}

Note that the number of components specified by \fullref{Corollary}{corol:countisoplaccomponentscontainingqrt} is
dependent only on the shape of $\P{w}$, not on $\wt{w}$ or $w$ itself. This is what one would expect in working with a
class of isomorphic components in $\Gamma(\plac_n)$.

\begin{corollary}
  Let $w \in \aA_n^*$, and let $\lambda = (\lambda_1,\ldots,\lambda_m)$ be the shape of $\P{w}$. Suppose
  $\pwt\lambda - \lambda_1 + 1 \leq n$. Then $\Gamma(\plac_n,w)$ contains at least
  \[
    \binom{\lambda_1}{\lambda_1-\lambda_2,\;\;\ldots,\;\;\lambda_{\plen\lambda-1}-\lambda_{\plen\lambda},\;\;\lambda_{\plen\lambda}}
  \]
  connected components of $\Gamma(\hypo_n)$.
\end{corollary}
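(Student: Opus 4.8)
The plan is to produce, inside the single component $\Gamma(\plac_n,w)$, one distinct quasi-crystal component for each plactic component isomorphic to $\Gamma(\plac_n,w)$ that happens to contain a quasi-ribbon word component of $\Gamma(\hypo_n)$. By \fullref{Corollary}{corol:countisoplaccomponentscontainingqrt}, and using the hypothesis $\pwt\lambda-\lambda_1+1\le n$ (which is exactly the side condition in that corollary), the number of such plactic components equals $N:=\binom{\lambda_1}{\lambda_1-\lambda_2,\ldots,\lambda_{\plen\lambda-1}-\lambda_{\plen\lambda},\lambda_{\plen\lambda}}$. So I would fix pairwise-distinct components $\Gamma(\plac_n,w^{(1)}),\ldots,\Gamma(\plac_n,w^{(N)})$ realising this count, and for each $j$ let $C_j$ be a quasi-ribbon word component contained in $\Gamma(\plac_n,w^{(j)})$ (unique by \fullref{Proposition}{prop:placccomponentcontainsatmostonehypoqrwcomponent}, though uniqueness is not essential here), together with a crystal isomorphism $\Theta_j:\Gamma(\plac_n,w^{(j)})\to\Gamma(\plac_n,w)$.

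By \fullref{Proposition}{prop:isomorphismsrestrict}, each $\Theta_j$ restricts to a quasi-crystal isomorphism carrying $C_j$ onto a quasi-crystal component $D_j:=\Theta_j(C_j)$ sitting inside $\Gamma(\plac_n,w)$. The crux is to show the $D_j$ are pairwise distinct. Suppose $D_j=D_k$ with $j\ne k$. Then $\Phi:=\Theta_k^{-1}\circ\Theta_j$ is a crystal isomorphism $\Gamma(\plac_n,w^{(j)})\to\Gamma(\plac_n,w^{(k)})$ with $\Phi(C_j)=\Theta_k^{-1}(D_j)=\Theta_k^{-1}(D_k)=C_k$, so (again by \fullref{Proposition}{prop:isomorphismsrestrict}) $\Phi$ restricts to a quasi-crystal isomorphism taking the quasi-ribbon word component $C_j$ onto the quasi-ribbon word component $C_k$. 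Choosing any $s\in C_j$ --- which is a quasi-ribbon word by \fullref{Corollary}{corol:qrwsameshapecomponent} --- the word $\Phi(s)$ lies in $C_k$ and is likewise a quasi-ribbon word; moreover $s\placcong\Phi(s)$ via $\Phi$, while $s\ne\Phi(s)$ because $w^{(j)}$ and $w^{(k)}$ label distinct components of $\Gamma(\plac_n)$. Then $\Phi$ maps $\Gamma(\hypo_n,s)=C_j$ onto $\Gamma(\hypo_n,\Phi(s))=C_k$, contradicting \fullref{Corollary}{corol:qrtcomponentsindifferentplaces}. Hence $j=k$, the $D_j$ are distinct, and $\Gamma(\plac_n,w)$ contains at least these $N$ quasi-crystal components, which is the claimed bound.

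The point needing the most care is this distinctness step, and specifically the observation that the $D_j$ themselves need not be quasi-ribbon word components --- crystal isomorphisms need not preserve that property --- so the counting must be organised one level up, among the isomorphic plactic components, where \fullref{Corollary}{corol:qrtcomponentsindifferentplaces} is available. It is also worth stressing that only a lower bound is obtained: $\Gamma(\plac_n,w)$ may contain additional quasi-crystal components that do not arise as any $\Theta_j(C_j)$ (indeed the figures in this section show this happening), so no matching upper bound is asserted.
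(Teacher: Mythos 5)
Your argument is correct and follows essentially the same route as the paper: invoke Corollary~\ref{corol:countisoplaccomponentscontainingqrt} for the count $N$, transport each quasi-ribbon word component into $\Gamma(\plac_n,w)$ via Proposition~\ref{prop:isomorphismsrestrict}, and get distinctness of the images from Corollary~\ref{corol:qrtcomponentsindifferentplaces}. You merely spell out the distinctness step (by composing $\Theta_k^{-1}\circ\Theta_j$) in more detail than the paper does, which is a fair elaboration rather than a different proof.
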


\begin{proof}
  \fullref{Corollary}{corol:countisoplaccomponentscontainingqrt} gives the number of components isomorphic to
  $\Gamma(\plac_n,w)$ that contain quasi-ribbon word components. Each one of these quasi-ribbon word components is
  isomorphic to a connected component of $\Gamma(\hypo_n)$ inside $\Gamma(\plac_n,w)$ by
  \fullref{Proposition}{prop:isomorphismsrestrict}, and to \emph{distinct} such connected components of
  $\Gamma(\hypo_n)$ by \fullref{Corollary}{corol:qrtcomponentsindifferentplaces}. Hence the number of connected
  components of $\Gamma(\hypo_n)$ inside $\Gamma(\plac_n,w)$ must be at least the number given in
  \fullref{Corollary}{corol:countisoplaccomponentscontainingqrt} in the case where $\pwt\lambda - \lambda_1 + 1 \leq n$.
\end{proof}

\subsection{The Sch\"utzenberger involution}

The \defterm{Sch\"utzenberger involution} is the map $\schinvlit : \aA_n^* \to \aA_n^*$ that sends $a \in \aA_n$ to
$n-a+1$ and is extended to $\aA_n^*$ by $\schinv{(a_1\cdots a_k)} \mapsto \schinv{a_k}\cdots \schinv{a_1}$. That is,
given a word, one obtains its image under $\schinvlit$ by reversing the word and replacing each symbol $a$ by
$n-a+1$. It is well-known that the Sch\"utzenberger involution reverses the order of weights, in the sense that if $u$
has higher weight than $v$, then $\schinv{v}$ has higher weight than $\schinv{u}$.

\begin{proposition}
  In $\Gamma(\hypo_n)$, the Sch\"utzenberger involution maps connected components to connected components. If there is
  an edge from $u$ to $v$ labelled by $i$, then there is an edge from $\schinv{v}$ to $\schinv{u}$ labelled by $n-i$.
\end{proposition}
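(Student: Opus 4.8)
The plan is to reduce everything to one identity relating the quasi-Kashiwara operators to the Sch\"utzenberger involution: for all $u \in \aA_n^*$ and all $i \in \set{1,\ldots,n-1}$,
\[
\schinv{\f_i(u)} = \e_{n-i}(\schinv{u}) \qquad\text{whenever } \f_i(u) \text{ is defined,}
\]
together with the symmetric statement $\schinv{\e_i(u)} = \f_{n-i}(\schinv{u})$ whenever $\e_i(u)$ is defined. Granting this, an edge from $u$ to $v$ labelled by $i$ means $v = \f_i(u)$, so $\schinv{v} = \e_{n-i}(\schinv{u})$, which is by definition an edge from $\schinv{v}$ to $\schinv{u}$ labelled by $n-i$; this is exactly the second assertion. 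For the first assertion, note that $\schinvlit$ is an involution of $\aA_n^*$, hence a bijection on the vertex set of $\Gamma(\hypo_n)$, and by the identity just discussed it carries edges to edges (reversing orientation and sending the label $i$ to $n-i$). Therefore it carries paths to paths, so $u$ and $u'$ lie in the same connected component if and only if $\schinv{u}$ and $\schinv{u'}$ do; thus $\schinvlit$ maps $\Gamma(\hypo_n,u)$ bijectively onto $\Gamma(\hypo_n,\schinv{u})$.

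The first step towards the identity is the observation that $u$ has an $i$-inversion if and only if $\schinv{u}$ has an $(n-i)$-inversion. Indeed, $\schinvlit$ sends each occurrence of $i$ to an occurrence of $n-i+1 = (n-i)+1$ and each occurrence of $i+1$ to an occurrence of $n-i$, while reversing the order of the letters; so an occurrence of $i+1$ lying to the left of an occurrence of $i$ in $u$ corresponds to an occurrence of $(n-i)+1$ lying to the left of an occurrence of $n-i$ in $\schinv{u}$, and conversely. Combined with the fact that the number of symbols $i$ in $u$ equals the number of symbols $n-i+1$ in $\schinv{u}$, this shows that $\f_i(u)$ is defined (i.e.\ $u$ is $i$-inversion-free and contains a symbol $i$) precisely when $\e_{n-i}(\schinv{u})$ is defined (i.e.\ $\schinv{u}$ is $(n-i)$-inversion-free and contains a symbol $(n-i)+1$).

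The second step is the positional bookkeeping. Write $u = u_1\cdots u_k$, and suppose $\f_i(u)$ is defined; it changes the rightmost symbol $i$ of $u$, say $u_q$, to $i+1$. Under $\schinvlit$ this letter moves to position $k-q+1$ of $\schinv{u}$ and takes the value $n-i+1$; since $u_q$ was the rightmost occurrence of $i$ in $u$, its image is the leftmost occurrence of $n-i+1$ in $\schinv{u}$, which is exactly the letter that $\e_{n-i}$ changes (to $n-i$). Because $\schinvlit$ acts letterwise and identically on every other position before and after this single change, $\schinv{\f_i(u)}$ and $\e_{n-i}(\schinv{u})$ coincide letter by letter, giving the identity. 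The symmetric identity $\schinv{\e_i(u)} = \f_{n-i}(\schinv{u})$ follows by the same computation, or by applying $\schinvlit$ to both sides of the first identity and using that $\schinvlit$ is an involution together with \fullref{Lemma}{lem:efinverse}.

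I do not expect a serious obstacle: the argument is a direct verification from the definitions of $\e_i$, $\f_i$ and of $\schinvlit$. The one point requiring care is the interchange of ``rightmost $i$'' with ``leftmost $(n-i)+1$'' under the reversal in the second step, since this is what makes $\f_i$ correspond to $\e_{n-i}$ rather than to $\f_{n-i}$ --- consistent with the already-noted fact that the Sch\"utzenberger involution reverses the order of weights and hence swaps the roles of the raising and lowering operators.
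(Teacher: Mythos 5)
Your proof is correct and follows essentially the same route as the paper's: a direct verification from the definitions that $\f_i$ acting on $u$ corresponds to $\e_{n-i}$ acting on $\schinv{u}$, with the rightmost symbol $i$ of $u$ becoming the leftmost symbol $n-i+1$ of $\schinv{u}$ under reversal and relabelling, from which both assertions follow. Your write-up is somewhat more explicit about the inversion correspondence and the positional bookkeeping, but the content is identical to the paper's argument.
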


\begin{proof}
  Clearly the second statement implies the first. So suppose that $\f_i(u) = v$. Then $u$ contains at least one symbol
  $i$ and, every symbol $i$ is to the left of every symbol $i+1$, and $v$ is obtained from $u$ by replacing the
  rightmost symbol $i$ by $i+1$. By the definition of $\schinvlit$, in the word $\schinv{u}$, every symbol $n-i+1$ is to
  the right of every symbol $n-i$. Hence $\e_{n-i}(\schinv{u})$ is defined, is equal to the word obtained from
  $\schinv{u}$ by replacing the leftmost symbol $n-i+1$ by $n-i$, which is $\schinv{v}$. Hence there is an edge from
  $\schinv{v}$ to $\schinv{u}$ labelled by $n-i$.
\end{proof}

\subsection{Characterizing quasi-crystal graphs?}

Stembridge \cite{stembridge_characterization} gives a set of axioms that characterize connected components of crystal
graphs. These axioms specify `local' conditions that the graph must satisfy. It is natural to ask whether there is an
analogous characterization for quasi-crystal graphs:

\begin{question}
\label{qu:characterization}
Is there a local characterization of quasi-crystal graphs?
\end{question}

However, the Stembridge axioms are connected with the underlying representation theory, in the sense that they refer to
whether the arrow labels correspond to orthogonal roots of the algebra. Since the quasi-crystal graphs are defined on a
purely combinatorial level, any characterization of them must also be on a purely combinatorial level.

\section{Applications}
\label{sec:applications}

\subsection{Counting factorizations}

One interpretation of the Littlewood--Richardson rule \cite[ch.~5]{fulton_young} is that the Littlewood--Richardson
coefficients $c_{\lambda\mu}^\nu$ give the number of different factorizations of an element of $\plac$ corresponding to
a Young tableau of shape $\nu$ into elements corresponding to a tableau of shape $\lambda$ and a tableau of shape
$\mu$. In particular, it shows that the number of such factorizations is independent of the content of the tableau. The
quasi-crystal structure yields a similar result for $\hypo$.

\begin{theorem}
  The number of distinct factorizations of an element of the hypoplactic monoid corresponding to a quasi-ribbon tableau
  of shape $\gamma$ into elements that correspond to tableau of shape $\alpha$ and $\beta$ is dependent only of $\gamma$,
  $\alpha$, and $\beta$, and not on the content of the element.
\end{theorem}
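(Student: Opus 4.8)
The plan is to mimic the plactic-monoid argument, using the quasi-crystal structure in place of the crystal structure. Fix compositions $\alpha$, $\beta$, $\gamma$. A factorization of an element $t$ of $\hypo_n$ corresponding to a quasi-ribbon tableau of shape $\gamma$ into an element of shape $\alpha$ times an element of shape $\beta$ is a pair $(p,q)$ of quasi-ribbon words with $\qrtableau{p}$ of shape $\alpha$, $\qrtableau{q}$ of shape $\beta$, and $pq \hypocong \colreading{t}$. I would first observe that, because $\sim$ is a congruence (\fullref{Proposition}{prop:isomdefinescong}) and equals $\hypocong$, if $\g_{i_1},\ldots,\g_{i_r}$ is a sequence of quasi-Kashiwara operators carrying the highest-weight word $\hat{t}$ of $\Gamma(\hypo_n,\colreading{t})$ to $\colreading{t}$, then applying the \emph{same} sequence to a factorization $(\hat{p},\hat{q})$ of $\hat{t}$ yields a factorization of $\colreading{t}$; here one uses \fullref{Lemma}{lem:gdefinedpreservedbyiso} to see that the operators split consistently across the two factors, and \fullref{Lemma}{lem:efinverse} to see the process is reversible. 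This gives a bijection between factorizations of $\colreading{t}$ and factorizations of its highest-weight representative $\hat{t}$, so it suffices to count factorizations when $t$ itself is highest-weight.

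Next I would reduce to a statement depending only on the shapes. By \fullref{Corollary}{corol:charhighestweighttableaubyshapes}, a highest-weight $t$ of shape $\gamma$ has $\wt{\colreading{t}} = \gamma$, so it is determined by $\gamma$ alone (it is the quasi-ribbon tableau with $j$th row full of $j$'s). Thus the number of factorizations of this distinguished element into shapes $\alpha$ and $\beta$ is a function of $\gamma$, $\alpha$, $\beta$ only. Combining this with the bijection from the previous paragraph, the number of factorizations of \emph{any} element of shape $\gamma$ equals this same number, independent of content.

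The one technical point that needs care is the claim that applying a fixed operator sequence to a factorization $(\hat{p},\hat{q})$ of $\hat{t}$ produces a \emph{factorization} in the required sense, i.e.\ that the two resulting factors are again quasi-ribbon words of shapes $\alpha$ and $\beta$ respectively. For this I would invoke \fullref{Corollary}{corol:qrwsameshapecomponent}: the quasi-Kashiwara operators preserve the property of being a quasi-ribbon word and the shape of the associated tableau, so as long as the operators act, the pieces stay within their shape-$\alpha$ and shape-$\beta$ components. The splitting of the operator sequence into the two subsequences acting on the respective factors is exactly what \fullref{Lemma}{lem:gdefinedpreservedbyiso} provides (taking $u = \hat{p}$, $u' = \hat{q}$, and $v,v'$ the corresponding words obtained after applying the operators). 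I expect this bookkeeping --- making precise that the correspondence $(\hat{p},\hat{q}) \leftrightarrow (p,q)$ is well-defined and bijective, rather than merely a map --- to be the main obstacle, though it is entirely parallel to the proof of \fullref{Proposition}{prop:hypoplacticclasssizeshape} and presents no real difficulty once the lemmata are assembled in the right order.
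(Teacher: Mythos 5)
Your proposal is correct and follows essentially the same route as the paper: both transport factorizations along the connected component of shape-$\gamma$ quasi-ribbon words using the quasi-Kashiwara operators, with \fullref{Lemma}{lem:efdefinedpreservedbyiso}/\fullref{Lemma}{lem:gdefinedpreservedbyiso} splitting the operators across the two factors and \fullref{Corollary}{corol:qrwsameshapecomponent} preserving the shapes $\alpha$ and $\beta$. The only difference is presentational — the paper shows each single $\e_i$ induces a bijection between the factorization sets (with $\f_i$ as inverse) and then invokes connectivity, whereas you compose the whole path to the unique highest-weight word at once.
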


\begin{proof}
  Let $\gamma,\alpha,\beta$ be compositions. Let $w \in \aA_n^*$ be a quasi-ribbon word such that $\gamma$ is the
  shape of $\qrtableau{w}$. Let
  \begin{align*}
  S^w_{\alpha,\beta} = \gsetsplit[\big]{(u,v)}{\;&\text{$u,v \in \aA_n^*$ are quasi-ribbon words,} \\
    &\text{$\qrtableau{u}$ has shape $\alpha$,} \\
    &\text{$\qrtableau{v}$ has shape $\beta$,} \\
    &w \hypocong uv}.
  \end{align*}
  So $S^w_{\alpha,\beta}$ is a complete list of factorizations of $w$ into elements whose corresponding tableaux have
  shapes $\alpha$ and $\beta$.  Let $i \in \set{1,\ldots,n-1}$ and suppose $\e_i(w)$ is defined. Pick some pair
  $(u,v) \in S^w_{\alpha,\beta}$. Then $w \hypocong uv$ and so $\e_i(uv)$ is defined, and $\e_i(w) = \e_i(uv)$. By
  \fullref{Lemma}{lem:efdefinedpreservedbyiso}, either $\e_i(uv) = u\e_i(v)$, or $\e_i(uv) = \e_i(u)v$. In the former
  case, $(u,\e_i(v)) \in S_{\e_i(w)}$, and in the latter case $(\e_i(u),v) \in S_{\e_i(w)}$, since $\e_i$ preserves
  being a quasi-ribbon word and the shape of the corresponding quasi-ribbon tableau by
  \fullref{Corollary}{corol:qrwsameshapecomponent}. So $\e_i$ induces an injective map from
  $S^w_{\alpha,\beta}$ to $S^{\e_i(w)}_{\alpha,\beta}$. It follows that $\f_i$ induces the inverse map from
  $S^{\e_i(w)}_{\alpha,\beta}$ to $S^w_{\alpha,\beta}$. Hence
  $\abs{S^w_{\alpha,\beta}} = \abs{S^{\e_i(w)}_{\alpha,\beta}}$. Similarly, if $\f_i(w)$ is defined, then
  $\abs{S^w_{\alpha,\beta}} = \abs{S^{\f_i(w)}_{\alpha,\beta}}$. Since all the quasi-ribbon words whose tableaux have
  shape $\gamma$ lie in the same connected component of $\Gamma(\hypo_n)$, it follows that $|S^w_{\alpha,\beta}|$ is
  dependent only on $\gamma$, not on $w$.
\end{proof}

\subsection{Conjugacy}

There are several possibile generalizations of conjugacy from groups to monoids. One possible definition, introduced
by Otto~\cite{otto_conjugacy}, is \defterm{$o$-conjugacy}, defined on a monoid $M$ by
\begin{equation}
\label{eq:oconjdef}
x \sim_o y \iff (\exists g,h \in M)(xg = gy \land hx = yh).
\end{equation}
The relation $\sim_o$ is an equivalence relation. Another approach is \defterm{primary conjugacy} or
\defterm{$p$-conjugacy}, defined on a monoid $M$ by
\[
x \sim_p y \iff (\exists u,v \in M)(x = uv \land y = vu).
\]
However, $\sim_p$ is reflexive and symmetric, but not transitive; hence it is sensible to follow Kudryavtseva \&
Mazorchuk \cite{kudryavtseva_three,kudryavtseva_conjugation} in working with its transitive closure $\sim_p^*$. It is
easy to show that ${\sim_p} \subseteq {\sim_p^*} \subseteq {\sim_o}$. In some circumstances, equality holds. For
instance, ${\sim_p^*} = {\sim_o}$ in $\plac$, and two elements of $\plac$ are related by $\sim_p^*$ and $\sim_o$ if and
only if they have the same weight \cite[Theorem~17]{choffrut_lexicographic}. Since $\hypo$ is a quotient of $\plac$, the
same result holds in $\hypo$, but the machinery of quasi-crystals gives a very quick proof that two elements with the
same weight are $\sim_o$-related:

\begin{proposition}
  Let $u,v \in \hypo_n$ be such that $\wt{u} = \wt{v}$. Then $u \sim_o v$.
\end{proposition}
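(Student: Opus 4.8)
The plan is to produce a single explicit conjugator that works on both sides. Fix representatives $u,v\in\aA_n^*$ of the two classes; since $\wt{\cdot}$ is well defined on $\hypo_n$, the hypothesis says $\wt{u}=\wt{v}$. I would take $g=h=r$, where $r$ is the strictly decreasing word $n(n-1)\cdots21$, and show that $ur\hypocong rv$ and $ru\hypocong vr$. Passing to $\hypo_n$ these give $ug=gv$ and $hu=vh$, so $u\sim_o v$.

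The key input — and the place where the quasi-crystal machinery does the work — is the observation that \emph{any two highest-weight words of the same weight are $\hypocong$-related}. To see this, note that for any highest-weight word $p\in\aA_n^*$ one has $p\hypocong\colreading{\qrtableau{p}}$, and by \fullref{Lemma}{lem:hypocongimpliessameedges} the words $p$ and $\colreading{\qrtableau{p}}$ have the same incident edges in $\Gamma(\hypo_n)$; hence $\colreading{\qrtableau{p}}$ is again highest-weight, and it is a quasi-ribbon word of weight $\wt{p}$. Thus if $p$ and $q$ are highest-weight with $\wt{p}=\wt{q}$, then $\colreading{\qrtableau{p}}$ and $\colreading{\qrtableau{q}}$ are highest-weight quasi-ribbon words of equal weight, so they coincide by \fullref{Corollary}{corol:qrthighestweightsameweightequal}, and therefore $p\hypocong\colreading{\qrtableau{p}}=\colreading{\qrtableau{q}}\hypocong q$.

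With this in hand it remains to check that $ur$, $ru$, $vr$, $rv$ are all highest-weight of one common weight. They all have weight $\wt{u}+(1,\dots,1)=\wt{v}+(1,\dots,1)$, since $\wt{r}=(1,\dots,1)$ and weight is additive and commutative. Each of them is highest-weight by \fullref{Proposition}{prop:charhighestweight}: the factor $r=n(n-1)\cdots21$ already contains every symbol of $\aA_n$ and, for each $i\in\set{1,\dots,n-1}$, places $i+1$ immediately before $i$, so each of the four words contains all of $\set{1,\dots,n}$, has largest symbol $n$, and has an $i$-inversion for every $i<n$. Applying the observation of the previous paragraph to the pairs $(ur,rv)$ and $(ru,vr)$ then yields $ur\hypocong rv$ and $ru\hypocong vr$, completing the proof.

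I expect no real obstacle once the reduction to highest-weight words is set up; the only points needing care are verifying that the chosen $r$ genuinely forces all of $ur,ru,vr,rv$ into the highest-weight locus (so \fullref{Proposition}{prop:charhighestweight} applies uniformly) and that the ``same weight'' hypothesis is precisely what collapses the two highest-weight targets to the same word. It is worth remarking in passing that the result could alternatively be obtained from the corresponding statement for $\plac_n$ (as $\hypo_n$ is a quotient of $\plac_n$), or from a more computational argument using the characterization \eqref{eq:charhypocong} of $\hypocong$, but the quasi-crystal argument above makes it almost immediate.
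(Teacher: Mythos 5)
Your proof is correct and follows essentially the same route as the paper: the same conjugator $g=h=n(n-1)\cdots 21$, the same appeal to \fullref{Proposition}{prop:charhighestweight} to see that all four products are highest-weight, and the same reduction to \fullref{Corollary}{corol:qrthighestweightsameweightequal} via equality of weights. The only difference is that you spell out explicitly (via $\colreading{\qrtableau{\cdot}}$ and \fullref{Lemma}{lem:hypocongimpliessameedges}) the step that two highest-weight words of equal weight are $\hypocong$-related, which the paper leaves implicit.
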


\begin{proof}
  Let $g = n(n-1)\cdots 21$. Then $ug$ is highest-weight by \fullref{Proposition}{prop:charhighestweight}, because it
  has an $i$-inversion for all $i \in \set{1,\ldots,n-1}$, and thus no $\e_i$ is defined. Similarly $gv$ is
  highest-weight. Furthermore, $\wt{ug} = \wt{u} + \wt{g} = \wt{v} + \wt{g} = \wt{gv}$. Hence $ug \hypocong
  gv$. Similarly, $gu \hypocong vg$ and so $u \sim_o v$.
\end{proof}

\subsection{Satisfying an identity}

Another application of the quasi-crystal structure is to prove that the hypoplactic monoid satisfies an identity. It is
known that $\plac_1$, $\plac_2$, and $\plac_3$ satisfy identities, but whether $\plac_n$ satisfies an identity (perhaps
dependent on $n$) is an important open problem \cite{kubat_identities}.

\begin{theorem}
  The hypoplactic monoid satisfies the identity $xyxy = yxyx$.
\end{theorem}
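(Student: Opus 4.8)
The plan is to use Novelli's characterization \eqref{eq:charhypocong}: for $u, v \in \aA^*$, one has $u \hypocong v$ if and only if $\descomp{\std{u}^{-1}} = \descomp{\std{v}^{-1}}$ and $\wt{u} = \wt{v}$. So it is enough to show that for arbitrary words $u, v \in \aA^*$ the words $uvuv$ and $vuvu$ have equal weight and equal descent composition $\descomp{\std{\cdot}^{-1}}$; the identity $xyxy = yxyx$ then holds in $\hypo$, hence in each $\hypo_n$. Equality of weights is immediate, since $\wt{\cdot}$ is additive: $\wt{uvuv} = 2\wt{u} + 2\wt{v} = \wt{vuvu}$. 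If $u$ or $v$ is empty then $uvuv$ and $vuvu$ are literally the same word, so we may assume both are nonempty.

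For the descent compositions I would first record an elementary fact about standardization. Let $w \in \aA^*$, let $a_1 < \ldots < a_m$ be the distinct letters occurring in $w$, and set $S_0 = 0$ and $S_j = \abs{w}_{a_1} + \ldots + \abs{w}_{a_j}$. In $\std{w}$ the values $S_{j-1}+1, \ldots, S_j$ are placed, in increasing order, at the positions of the successive occurrences (left to right) of $a_j$ in $w$. Hence the entries of $\std{w}^{-1}$ strictly increase across each block of positions $S_{j-1}+1, \ldots, S_j$, so $D(\std{w}^{-1}) \subseteq \set{S_1, \ldots, S_{m-1}}$; and $S_j \in D(\std{w}^{-1})$ if and only if the last occurrence of $a_j$ in $w$ lies strictly to the right of the first occurrence of $a_{j+1}$ in $w$ (this being exactly the statement that the value $S_j$ sits to the right of the value $S_j+1$ in $\std{w}$). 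Since $uvuv$ and $vuvu$ have the same weight, they have the same letters and the same numbers $S_j$, so it remains only to compare, for each consecutive pair $a_j < a_{j+1}$, the positions of the last $a_j$ and the first $a_{j+1}$ in each of the two words.

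The key point is that in both words every $S_j$ is in fact a descent. Regard $uvuv$ as the four consecutive factors $u, v, u, v$: the last occurrence of $a_j$ lies in the final $v$-factor if $a_j$ occurs in $v$, and otherwise in the second $u$-factor, so in either case its position is at least $\abs{u} + \abs{v} + 1$; the first occurrence of $a_{j+1}$ lies in the first $u$-factor if $a_{j+1}$ occurs in $u$, and otherwise in the first $v$-factor, so in either case its position is at most $\abs{u} + \abs{v}$. Thus the last $a_j$ is strictly right of the first $a_{j+1}$, and $S_j \in D(\std{uvuv}^{-1})$. Regarding $vuvu$ as the factors $v, u, v, u$ and arguing the same way (now with threshold $\abs{v} + \abs{u}$) gives $S_j \in D(\std{vuvu}^{-1})$. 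Hence $D(\std{uvuv}^{-1}) = \set{S_1, \ldots, S_{m-1}} = D(\std{vuvu}^{-1})$, and since these are the descent sets of compositions of the same weight, the compositions agree. By \eqref{eq:charhypocong}, $uvuv \hypocong vuvu$, which proves the identity.

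I do not expect a genuine obstacle here: once \eqref{eq:charhypocong} is available the argument is elementary, and the only delicate part is the positional bookkeeping in the auxiliary fact about $D(\std{w}^{-1})$ and in the final case distinction. One could instead argue via the quasi-crystal graph, pushing $uvuv$ and $vuvu$ to the common highest-weight word of a shared connected component, but that appears strictly more laborious, so I would present the direct computation above.
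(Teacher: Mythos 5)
Your proof is correct, but it takes a genuinely different route from the paper's. The paper proves the identity with the quasi-crystal machinery: it argues by reverse induction on weight, applying the same raising operators $\e_i$ simultaneously to $xyxy$ and $yxyx$ until both reach highest weight, where equality of weights forces equality of the highest-weight representatives. You instead invoke Novelli's characterization \eqref{eq:charhypocong} and verify directly that $uvuv$ and $vuvu$ have the same weight and the same descent composition $\descomp{\std{\cdot}^{-1}}$. Your positional bookkeeping checks out: the values $S_{j-1}+1,\ldots,S_j$ of $\std{w}$ do sit at the successive occurrences of $a_j$, so the only candidate descents of $\std{w}^{-1}$ are the $S_j$, and $S_j$ is a descent exactly when the last $a_j$ lies strictly right of the first $a_{j+1}$; your threshold argument (last $a_j$ at position at least $\abs{u}+\abs{v}+1$, first $a_{j+1}$ at position at most $\abs{u}+\abs{v}$, and symmetrically for $vuvu$) correctly shows every $S_j$ is a descent in both words, so the descent sets coincide and, the lengths being equal, so do the compositions. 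What each approach buys: yours is shorter and more elementary once \eqref{eq:charhypocong} is granted, and it isolates the clean combinatorial reason the identity holds (both words realize the maximal possible descent set for their content); the paper's proof deliberately avoids leaning on Novelli's theorem in order to demonstrate that the quasi-crystal operators alone suffice, and its inductive scheme (commute the operators past the product, reduce to the highest-weight case) is the reusable template for the other applications in that section.
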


\begin{proof}
  Let $x,y \in \aA_n^*$. The idea is to raise the products $xyxy$ and $yxyx$ to highest-weight using the same sequence
  of operators $\e_i$, deduce that the corresponding highest-weight words are equal, and so conclude that
  $xyxy \hypocong yxyx$. More formally, the proof proceeds by reverse induction on the weight of $xyxy$.

  The base case of the induction is when $xyxy$ is highest-weight. Thus $\e_i(xyxy)$ is undefined for all
  $i \in \set{1,\ldots,n-1}$. So $xyxy$ must have an $i$-inversion for all $i \in
  \set{1,\ldots,\max(xyxy)-1}$.
  The symbols $i+1$ and $i$ may each lie in $x$ or $y$, but in any case, there is an $i$-inversion in $yxyx$. Hence
  $\e_i(yxyx)$ is undefined for all $i \in \set{1,\ldots,n-1}$. So $yxyx$ is also highest-weight. Clearly
  $\wt{xyxy} = \wt{yxyx}$, and so $\qrtableau{xyxy} = \qrtableau{yxyx}$ by
  \fullref{Corollary}{corol:qrthighestweightsameweightequal}. Hence $xyxy \hypocong yxyx$.

  For the induction step, suppose $xyxy$ is not highest-weight, and that $x'y'x'y' \hypocong y'x'y'x'$ for all
  $x',y' \in \aA_n^*$ such that $x'y'x'y'$ has higher weight than $xyxy$. Then $\e_i(xyxy)$ is defined for some
  $i \in \set{1,\ldots,n-1}$. Neither $x$ nor $y$ contains a symbol $i$, since otherwise there would be an $i$-inversion
  in $xyxy$. So
  $\ecount_i(xyxy) = \ecount_i(yxyx) = \abs{xyxy}_{i+1} = \abs{yxyx}_{i+1} = 2\abs{x}_{i+1} + 2\abs{y}_{i+1} =
  2\ecount_i(x) + 2\ecount_i(y)$,
  and $\ecount_i(xyxy)$ applications of $i$ change every symbol $i+1$ in $xyxy$ to $i$. That is,
  $\e_i^{\ecount_i(xyxy)}(xyxy)$ and $\e_i^{\ecount_i(yxyx)}(yxyx)$ are both defined and are equal to $x'y'x'y'$ and
  $y'x'y'x'$ respectively, where $x' = \e_i^{\ecount_i(x)}(x)$ and $y' = \e_i^{\ecount_i(y)}(y)$. Since $x'y'x'y'$ has
  higher weight than $xyxy$, it follows by the induction hypothesis that $x'y'x'y' \hypocong y'x'y'x'$. Hence
  \[
  xyxy = \f_i^{\ecount_i(xyxy)}(x'y'x'y') \hypocong \f_i^{\ecount_i(xyxy)}(y'x'y'x') = yxyx.
  \]
  This completes the induction step and thus the proof.
\end{proof}

\bibliography{combinatorics,combinatorialsemigroups,presentations,rewriting,semigroups,c_publications,\jobname}
\bibliographystyle{alphaabbrv}

\end{document}